\colorlet{cite}{red}
\tikzset{ 
  baseline=-2.3pt,
  text height=1.5ex, text depth=0.25ex,
  >=stealth,
  node distance=2cm,
  mid/.style={fill=white,inner sep=2.5pt},
}
\newtheoremstyle{mydef}
  {}		% Space above environment
  {}		% Space below environment
  {}		% Body font
  {}		% Indent amount (empty = no indent, \parindent = para indent)
  {\scshape}	% theorem head font
  {. }		% Punctuation after heading
  { }		% Space after heading
  {\thmname{#1}\thmnumber{ #2}\thmnote{ #3}}	% Heading spec
\newtheorem{theorem}{Theorem}[section]
\newtheorem*{theorem*}{Theorem}
\newtheorem{proposition}[theorem]{Proposition}
\newtheorem*{proposition*}{Proposition}
\newtheorem{lemma}[theorem]{Lemma}
\newtheorem*{lemma*}{Lemma}
\newtheorem{corollary}[theorem]{Corollary}
\newtheorem*{corollary*}{Corollary}
\theoremstyle{definition}
\newtheorem{definition}[theorem]{Definition}
\newtheorem{example}[theorem]{Example}
\theoremstyle{remark}
\newtheorem{remark}[theorem]{Remark}
\newtheorem*{conjecture*}{Conjecture}
\DeclareMathOperator{\Ad}{Ad}
\DeclareMathOperator{\Nij}{Nij}
\author{Fabricio Valencia and Carlos Varea}
\subjclass[2020]{53D18; 14M15}
\address{}
\date{\today}
\address{F. Valencia - Instituto de Matem\'atica e Estat\'istica, Universidade de S\~ao Paulo, Rua do Mat\~ao 1010, Cidade Universit\'aria, 05508-090 S\~ao Paulo - Brazil.
\newline
C. Varea - Departamento de Matemática, Universidade Tecnológica Federal do Paraná, Av. Alberto Carazzai 1640, Centro Cornélio Procópio, 86300-000 Paraná - Brazil.
\newline  
      \phantom{xx}
	%L. San Martin  - Imecc -	Unicamp,  Campinas, Brasil.\newline
  fabricio.valencia@ime.usp.br, carlosvarea@utfpr.edu.br}
\title{Invariant generalized almost complex structures on real flag manifolds}
\begin{document}
\maketitle

\begin{abstract}
We characterize those real flag manifolds that can be endowed with invariant generalized almost complex structures. We show that no $GM_2$-maximal real flag manifolds admit integrable invariant generalized almost complex structures. We give a concrete description of the generalized complex geometry on the maximal real flags of type $B_2$, $G_2$, $A_3$, and $D_l$ with $l\geq 5$, where we prove that the space of invariant generalized almost complex structures under invariant $B$-transformations is homotopy equivalent to a torus and we classify all invariant generalized almost Hermitian structures on them.
\end{abstract}

\tableofcontents
\section{Introduction}
Generalized complex geometry is a theory recently introduced by Hitchin \cite{H} and further developed by both Gualtieri \cite{G1,G2,G3} and Cavalcanti \cite{Ca2}. This provides a unified framework where it is possible to establish a generalization of several classical geometric structures as for instance symplectic, complex, K\"ahler, Calabi--Yau, among others. It is worth mentioning that both complex and symplectic geometries are extreme cases in the theory and their detailed study has allowed to extend several classical results to the context of generalized complex geometry.

Great developments on generalized complex geometry in both, mathematics and physics, have been done in last years. For instance, some of them aim at providing applications to mathematical physics as may be viewed in:

\begin{itemize}
\item \cite{HH} where the authors gave an extension of the Hitchin–Kobayashi correspondence for quiver bundles over generalized K\"ahler manifolds, thus obtaining applications to Yang--Mills theory and analysis,
\item \cite{Gr,BLPZ} where are developed various aspects of string theory using techniques from generalized K\"ahler geometry,
\item \cite{CG2} where the authors applied their study about generalized complex geometry to attack problems related to mirror symmetry and $T$-duality, and
\item \cite{LT} where is given an extension of the notion of moment map which allows both to study Hamiltonian mechanics in a more general setting and to obtain a notion of reduction in generalized complex geometry as well as in generalized K\"ahler geometry.
\end{itemize}

The aim of this paper is to start with the study of generalized complex geometry on real flag manifolds. Examples of generalized complex structures constructed by using Lie theory can be found for instance in:

\begin{itemize}
\item \cite{CG} where the authors gave a classification of all 6-dimensional nilmanifolds admitting generalized complex structures,
\item \cite{AD} where it  was described a regular class of invariant generalized complex structures on a real semisimple Lie group,
\item \cite{BMW} where they were classified all left invariant generalized complex and K\"ahler structures on simply connected 4-dimensional Lie groups as well as studied their invariant cohomologies, and, more importantly to us,
\item \cite{VS,V,GVV} where the corresponding authors studied invariant generalized complex geometry on complex flag manifolds. Firstly, in \cite{VS} was described the set of all invariant generalized almost complex structures on a maximal complex flag manifold and were presented, in a very concrete way, the integrability conditions of each of these structures. Secondly, in \cite{V} were classified all invariant generalized complex structures on a partial flag manifold with at most four isotropy summands. Thirdly, in \cite{GVV} were classified all invariant generalized K\"ahler structures on a complex maximal flag, were described the quotient spaces of all invariant generalized complex and K\"ahler structures on a complex maximal flag up to action by invariant $B$-transformations, and was given an explicit expression for the invariant pure spinor of each of these structures. 
\end{itemize}     

A flag manifold associated to a non-compact semisimple Lie algebra $\mathfrak{g}$ is a homogeneous space $\mathbb{F}_\Theta=G/P_\Theta
$ where $G$ is a connected Lie group with Lie algebra $\mathfrak{g}$ and $P_\Theta$ is a parabolic subgroup. If $K$ is a maximal compact subgroup of $G$ and $K_\Theta=K\cap P_\Theta$ then the flag manifold $\mathbb{F}_\Theta$ can be also written as $\mathbb{F}_\Theta=K/K_\Theta$. In this paper we study the existence, integrability and geometry of those generalized almost complex structures on real flag manifolds $\mathbb{F}_\Theta$ that are invariant with respect to the isotropy representation, in the case that $\mathfrak{g}$ is a split real form of a complex simple Lie algebra. We mainly focus on the case of real maximal flags.

Unlike what happens in the case of complex flag manifolds, for real flag manifolds there is no systematic way to study invariant geometric structures. Some advances have been done for instance in:
\begin{itemize}
\item \cite{PS} where the authors provided a detailed analysis of the isotropy representations for the flag manifolds of split real forms of complex simple Lie
algebras,
\item  \cite{FBS} where it was studied the existence of $K$-invariant almost complex structures on real flag manifolds, thus obtaining as conclusion that only a few flag manifolds (associated to split real forms) admit $K$-invariant complex structures, and
\item \cite{GG} where the authors studied the existence of invariant Einstein metrics on real flag manifolds associated to simple and non-compact split real forms of complex classical Lie algebras whose isotropy representation decomposes into two or three irreducible sub-representations.
\end{itemize}
This paper is structured as follows. In Section \ref{S:2} we introduce the basic concepts of both generalized complex geometry and Lie theory that we will be using throughout the paper. In Section \ref{S:3} we develop preliminary results which will be the base for the necessary computations that we must do. They will allow us to state our main results. Using the notion of $M$-equivalence classes introduced in \cite{PS} which are defined by an equivalence relation between isotropy representations of the real flag $\mathbb{F}_\Theta$, we give a concrete description of the invariant maximal isotropic subspaces in $T_{b_\Theta} \mathbb{F}_\Theta\oplus T_{b_\Theta}^\ast \mathbb{F}_\Theta$ with $b_\Theta$ the origin of $\mathbb{F}_\Theta$. Accordingly, we can state a generalization of Theorem 1. from \cite{FBS} as follows:
\begin{theorem*}[\ref{P1}]
A real flag manifold $\mathbb{F}_\Theta=K/K_\Theta$ admits a $K$-invariant generalized almost complex structure if and only if it is a maximal flag of the type $A_3$, $B_2$, $G_2$, $C_l$ for $l$ even, $D_l$ for $l\geq 4$ or it is one of the following intermediate flags:
\begin{itemize}
\item[$-$] of type $B_3$ and $\Theta=\lbrace \lambda_1-\lambda_2,\lambda_2-\lambda_3\rbrace$,
\item[$-$] of type $C_l$ with $\Theta=\lbrace \lambda_{d}-\lambda_{d+1},\cdots, \lambda_{l-1}-\lambda_{l}, 2\lambda_{l}\rbrace$ for $d>1$ odd, and
\item[$-$] of type $D_l$ with $l=4$ and $\Theta$ being one of the sets of roots: $\lbrace \lambda_1-\lambda_2,\lambda_3-\lambda_4\rbrace$, $\lbrace \lambda_1-\lambda_2,\lambda_3+\lambda_4\rbrace$, $\lbrace \lambda_3-\lambda_4,\lambda_3+\lambda_4\rbrace$.
\end{itemize}
\end{theorem*}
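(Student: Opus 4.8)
The plan is to pass to the origin and turn the statement into a question about the isotropy representation. A $K$-invariant generalized almost complex structure is completely determined by its value at $b_\Theta$, and this value must be fixed by the isotropy representation of $K_\Theta$; hence $\mathbb{F}_\Theta$ carries one if and only if the complexified double $(T_{b_\Theta}\mathbb{F}_\Theta\oplus T^{*}_{b_\Theta}\mathbb{F}_\Theta)\otimes\mathbb{C}$ contains a $K_\Theta$-invariant maximal isotropic subspace $L$ with $L\cap\bar L=0$. Invoking the description of the invariant maximal isotropic subspaces obtained in Section \ref{S:3}, I would first record that every such $L$ is compatible with the decomposition of $T_{b_\Theta}\mathbb{F}_\Theta$ into $M$-equivalence classes: the isotropy representation preserves this decomposition and the natural pairing block-diagonalizes with respect to it, so the search for $L$ splits into independent problems, one for each $M$-equivalence class together with its dual in $T^{*}_{b_\Theta}\mathbb{F}_\Theta$.

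The second step is to determine the local condition on a single $M$-equivalence class. The elementary obstruction is that a one-dimensional real summand $\mathfrak{g}_\alpha$ cannot be used by itself: in the two-dimensional space $\mathfrak{g}_\alpha\oplus\mathfrak{g}_\alpha^{*}$ equipped with its split pairing the only isotropic lines are $\mathbb{C}\,\mathfrak{g}_\alpha$ and $\mathbb{C}\,\mathfrak{g}_\alpha^{*}$, both of which are real, so $L\cap\bar L\neq 0$ is unavoidable. Consequently the summands of a class must be combined, and whether this can be done $K_\Theta$-invariantly depends on the cardinality of the class and on the reality type of the representation that $M$ induces on it. I would phrase this as a precise criterion — roughly, that each $M$-equivalence class be matchable into complex or symplectic blocks compatible with the $M$-action — and note that, because the cotangent directions are now available, this criterion is the natural generalization of the almost complex condition of \cite{FBS}, the analysis for generalized structures being governed by the same $M$-equivalence data.

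Finally, I would run the criterion against the classification of isotropy representations of split real flags given in \cite{PS}. For each Dynkin type one reads off the $M$-equivalence classes of $\mathbb{F}_\Theta$ and checks whether every class satisfies the local condition; necessity comes from exhibiting a single offending class, and sufficiency from assembling the blocks into an explicit invariant $L$. I expect the bulk of the work — and the main obstacle — to be this case-by-case enumeration, in particular pinning down the parity thresholds ($C_l$ with $l$ even, $D_l$ with $l\geq 4$) and singling out the few intermediate flags of types $B_3$, $C_l$ and $D_4$ for which, although most classes are of the wrong cardinality or type, a compatible choice survives on every class. The low-rank coincidences that make $A_3$, $B_2$ and $G_2$ work, while all higher maximal flags of types $A$ and $B$ fail, should likewise fall out of this bookkeeping.
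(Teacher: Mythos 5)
Your scaffolding is the same as the paper's: reduce to the isotropy representation at $b_\Theta$, split any invariant maximal isotropic along $M$-equivalence classes (this is exactly Lemmas \ref{decomposition} and \ref{isotropic}), and then compare class-by-class against the tables of \cite{PS}. The genuine gap is in the middle step, which is where the content of Theorem \ref{P1} actually lives: you never state the local criterion, and the paper's criterion is sharp and simple --- a class $[\alpha]_M$ supports an invariant structure if and only if it contains an \emph{even} number of roots. Your elementary obstruction only disposes of singleton classes, and that is not enough for the classification: odd classes of cardinality $\geq 3$ really occur and really have to be excluded. For instance, in the maximal flag of $B_3$ the short roots form a single $M$-class of cardinality $3$ (which is why only the intermediate $B_3$ flag appears in the statement), and in the maximal flag of $C_l$ with $l$ odd the long roots $\lbrace 2\lambda_1,\dots,2\lambda_l\rbrace$ form a class of odd cardinality $l$. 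To kill these you need the general fact that $V_{[\alpha]_M}\oplus V^\ast_{[\alpha]_M}$, whose pairing has split signature $(k,k)$ with $k=\#[\alpha]_M$, admits an orthogonal complex structure only when $k$ is even: an orthogonal complex structure $\mathbb{J}$ turns the pairing into a Hermitian form whose real part it is, forcing both signature entries to be even (equivalently, a generalized complex structure exists only on an even-dimensional space, as one sees from the pure-spinor normalization $\Omega\wedge\overline{\Omega}\wedge\omega^{n-k}\neq 0$). This parity argument is how the paper gets necessity, and it is absent from your plan.

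Your sufficiency step also stalls as written, and for an instructive reason: you propose to match summands into ``complex or symplectic blocks compatible with the $M$-action'' and to track the ``reality type'' of the $M$-representation, but there is nothing to track. Since the root spaces are one-dimensional and $\Ad(m)X_\beta=\pm X_\beta$, two roots are $M$-equivalent precisely when these signs agree for every $m\in M$; hence $M$ acts on each block $V_{[\alpha]_M}\oplus V^\ast_{[\alpha]_M}$ by the scalar $\pm 1$ (using $\Ad^\ast(m)=\Ad(m^{-1})$ and $m^2=e$ the dual carries the same sign). Consequently \emph{every} orthogonal complex structure on an even-dimensional block is automatically $M$-invariant --- for example the symplectic-type structure attached to any nondegenerate $2$-form on $V_{[\alpha]_M}$ --- and taking direct sums over the classes finishes the construction; for partial flags one then only has to add the $(K_\Theta)_0$-condition of Remark \ref{PartialCases}. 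Without this scalar-action observation your proposed criterion is not a criterion, the case-by-case check against \cite{PS} has nothing precise to verify, and neither direction of the equivalence is actually proved. So the route is right in outline, but the two ideas that make it work (the parity obstruction and the scalar $M$-action on class blocks) are missing.
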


We show a more explicit and useful expression for both the Courant bracket and the Nijenhuis operator at the origin $b_\Theta$ of $\mathbb{F}_\Theta$ (see Proposition \ref{CaurentB}). 

There is a special kind of maximal real flag manifolds admitting invariant generalized almost complex structures that we are very interested in. Namely, a maximal real flag manifold of those described in Theorem \ref{P1} is said to be a $GM_2$-\emph{maximal real flag} if it admits at least one $M$-equivalence class root subspace of dimension $2$. In Section \ref{S:4} we deal with the integrability of invariant generalized almost complex structures on $GM_2$-maximal real flag manifolds. By means of very explicit computations using preliminary results of Section \ref{S:3} we get that:
\begin{theorem*}[\ref{NoIntegrables}]
No $GM_2$-maximal real flag manifold admits integrable $K$-invariant generalized almost complex structures.
\end{theorem*}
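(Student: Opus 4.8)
The plan is to turn integrability into a finite algebraic obstruction at the origin and then to read off a single term that no invariant structure can cancel. Recall that a $K$-invariant generalized almost complex structure $\mathcal{J}$ is integrable exactly when its $+i$-eigenbundle $L\subset(T\mathbb{F}_\Theta\oplus T^\ast\mathbb{F}_\Theta)^{\mathbb{C}}$ is involutive under the Courant bracket, and that by invariance this need only be tested at $b_\Theta$; equivalently, the Nijenhuis operator $\Nij$ whose value at $b_\Theta$ is computed in Proposition \ref{CaurentB} must vanish. For a maximal flag the isotropy subgroup $K_\Theta=M$ is finite, so $K$-invariance is simply $M$-invariance, and since $M$ acts on each restricted root space through a sign character, $\mathcal{J}$ can only mix root spaces carrying the same character. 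These groupings are exactly the $M$-equivalence classes, and the description of invariant maximal isotropic subspaces obtained in Section \ref{S:3} lets one assemble $\mathcal{J}$ class by class.

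First I would fix a $2$-dimensional $M$-equivalence class root subspace $V=\mathfrak{g}_\alpha\oplus\mathfrak{g}_\beta$, whose existence is the defining feature of a $GM_2$-maximal flag, and write down the most general invariant $\mathcal{J}$ on $V\oplus V^\ast$. Because $M$ acts on $\mathfrak{g}_\alpha$ and $\mathfrak{g}_\beta$ through the \emph{same} character, invariance couples the two root directions and collapses $\mathcal{J}|_{V\oplus V^\ast}$ to a short list of normal forms depending on only a few real parameters; I would enumerate these, keeping careful track of the induced action on the dual $1$-forms $\xi_\alpha,\xi_\beta$, since the genuine freedom left after invariance is exactly what must be shown insufficient for integrability.

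Next I would feed representatives of $L$ supported on $V$ into the Courant-bracket expression of Proposition \ref{CaurentB}. The bracket splits into a Lie-theoretic part, governed by the structure constants $N_{\alpha,\beta}$ of the root vectors, and a form-theoretic part coming from the exterior derivatives of $\xi_\alpha,\xi_\beta$. The decisive step is to project $\Nij$ onto a suitable root-space component determined by the class and to show, using the normal forms above, that this component reduces to an expression proportional to $N_{\alpha,\beta}$ weighted by the relevant $M$-sign, and hence cannot be annihilated by any admissible choice of parameters. I expect this to be the main obstacle: one must verify that the structure constant in play is nonzero and that the sign obstruction persists uniformly across all the flag types of Theorem \ref{P1} that possess a $2$-dimensional class. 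This is precisely where the genuinely real nature of the flag intervenes, since over the complexification the analogous obstruction can be reabsorbed, whereas the $M$-signs of the split real form prevent it. As $\Nij\neq0$ for every invariant $\mathcal{J}$, no $GM_2$-maximal real flag manifold carries an integrable invariant generalized almost complex structure.
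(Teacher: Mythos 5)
Your reduction to the origin, the identification of $K$-invariance with $M$-invariance, and the class-by-class normal forms are all consistent with the paper (Remark \ref{2DimensionalCases}). But the core of your plan --- feeding elements of $L$ \emph{supported on a single $2$-dimensional class} $V=\mathfrak{g}_\alpha\oplus\mathfrak{g}_\beta$ into $\Nij$ and extracting one term proportional to a structure constant that no choice of parameters can cancel --- cannot work. By Corollary \ref{usefulformula1}, $\Nij$ at the origin is nonzero only on triples of the form $(X_{\gamma_1},X_{\gamma_2},X^\ast_{\gamma_1+\gamma_2})$ with $\gamma_1,\gamma_2,\gamma_1+\gamma_2$ all roots. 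If $\gamma_1,\gamma_2$ lie in a two-element $M$-class $\{\alpha,\beta\}$, then $\gamma_1+\gamma_2\in\{2\alpha,2\beta,\alpha+\beta\}$ never lies in that class again ($2\alpha,2\beta$ are not roots in a reduced system, and $\alpha+\beta\neq\alpha,\beta$); for instance in $B_2$ the class $\{\alpha+\beta,\beta\}$ has $(\alpha+\beta)+\beta=\alpha+2\beta$ sitting in the \emph{other} class. Hence $\Nij$ restricted to a single class block $L_{[\alpha]_M}$ vanishes identically, and every invariant structure passes the integrability test when probed only inside one class: the obstruction is intrinsically a cross-class phenomenon. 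This is precisely why the paper's proof must enumerate all combinations of complex/noncomplex types on the two or three interacting classes (Tables \ref{table2}, \ref{table3}, \ref{table4}) and evaluate $\Nij$ on mixed triples drawn from different blocks.

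There are two further gaps even granting the cross-class setup. First, your expectation of a single uncancellable term is false in several combinations: in case \textbf{2} of $A_3$, case \textbf{8} of $G_2$, or cases \textbf{1}, \textbf{4}, \textbf{6}, \textbf{8} of $D_l$, each individual Nijenhuis component \emph{can} be made to vanish for suitable parameters; the contradiction only appears after writing down two such components, extracting ratio conditions on the structure constants, and playing these against the Jacobi identity (compare Equations \eqref{eq1}--\eqref{eq2}, \eqref{g2-1}--\eqref{g2-2}, \eqref{dl-1}--\eqref{dl-2}). Second, your argument ignores $C_l$ with $l\geq 6$ even, which is a $GM_2$-maximal flag but also carries the $l$-dimensional class $\{2\lambda_1,\dots,2\lambda_l\}$; there the paper needs a genuinely different argument, expanding $\mathbb{J}X_{2\lambda_1}$ over the big class and using the tensor $N_{\mathbb{J}}$ directly, which no analysis confined to $2$-dimensional classes can supply. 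Finally, the ``$M$-sign'' heuristic you invoke plays no role in the contradiction: the signs only dictate the block decomposition, while the actual obstruction lives in the structure constants and the Jacobi identity.
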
 
It is important to point out that the computations needed to prove this result strongly depend on the classification of $M$-classes of isotropy representations which was carried out in \cite{PS}. Thus, given the nature of the problem of integrability and the rich algebraic and analytic structure of the manifolds we are working with, we reduced such an integrability problem to perform computations using the algebraic information provided by the root systems involved in each case.

Finally, Section \ref{S:5} is devoted to give a concrete description of the generalized complex geometry on the maximal real flags $\mathbb{F}$ of type $B_2$, $G_2$, $A_3$, and $D_l$ with $l\geq 5$. For these cases, we study the effects of the action by invariant $B$-transformations on the space of invariant generalized almost complex structures. We prove that for every $M$-class root space, any generalized complex structure which is not of complex type is a $B$-transform of a structure of symplectic type. We also show that every element in the set of generalized complex structures of complex type is fixed for the action induced by $B$-transformations. Adapting these results to the general case and denoting by $\mathfrak{M}_a(\mathbb{F})$ the quotient space obtained from the set of all invariant generalized almost complex structures modulo the action by invariant $B$-transformations, we obtain:
\begin{theorem*}[\ref{BModuli2.1}]
Suppose that $\displaystyle \Pi^-=\bigcup_{j=1}^d[\alpha_j]_M$ where $d$ is the number of $M$-equivalence classes. Then
$$\mathfrak M_a (\mathbb F)= \prod_{[\alpha_j]_M\subset\Pi^-} \mathfrak{M}_{\alpha_j}(\mathbb{F})=(\mathbb{R}^\ast \cup (\mathbb{R}^\ast\times \mathbb{R}))_{\alpha_1} \times \cdots \times (\mathbb{R}^\ast \cup (\mathbb{R}^\ast\times \mathbb{R}))_{\alpha_d}.$$
In particular, $\mathfrak M_a (\mathbb F)$ admits a natural topology induced from $\mathbb{R}^{2d}$ with which it is homotopy equivalent to the $d$-torus $\mathbb{T}^d$.
\end{theorem*}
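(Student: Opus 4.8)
The plan is to reduce the computation of $\mathfrak{M}_a(\mathbb{F})$ to a local analysis on each $M$-equivalence class and then reassemble the pieces. First I would invoke the description of invariant maximal isotropic subspaces from Section~\ref{S:3}: since the isotropy representation decomposes $T_{b_\Theta}\mathbb{F}\oplus T^*_{b_\Theta}\mathbb{F}$ into blocks indexed by the $M$-classes $[\alpha_j]_M\subset\Pi^-$, any invariant generalized almost complex structure $\mathcal{J}$ must preserve this block decomposition and is therefore determined by its restrictions $\mathcal{J}_{\alpha_j}$ to the individual $M$-class root spaces. The same holds for invariant $B$-transformations, which act block-diagonally. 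Consequently both the set of invariant generalized almost complex structures and the group of invariant $B$-transformations split as products over the $M$-classes, and the quotient factors accordingly as $\mathfrak{M}_a(\mathbb{F})=\prod_{j=1}^d \mathfrak{M}_{\alpha_j}(\mathbb{F})$, which is the first displayed equality.

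For the second equality I would analyze a single factor $\mathfrak{M}_{\alpha_j}(\mathbb{F})$ using the structural results already established in Section~\ref{S:5}. On each $M$-class root space the invariant structures split into two disjoint families by type. The structures of complex type form a copy of $\mathbb{R}^*$, parametrized by the nonzero scalar governing the invariant complex structure, and each of them is fixed by the induced action of invariant $B$-transformations; hence this family survives untouched in the quotient. The remaining structures are not of complex type, and by the result that each such one is an (in general non-invariant) $B$-transform of a symplectic-type structure, modding out only by the invariant $B$-transformations leaves a residual two-parameter family $\mathbb{R}^*\times\mathbb{R}$, in which the $\mathbb{R}^*$ records the symplectic scaling and the $\mathbb{R}$ the surviving $B$-field direction. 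Taking the disjoint union yields $\mathfrak{M}_{\alpha_j}(\mathbb{F})=\mathbb{R}^*\cup(\mathbb{R}^*\times\mathbb{R})$.

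To equip $\mathfrak{M}_a(\mathbb{F})$ with a topology and read off its homotopy type, I would exhibit each factor as a subset of $\mathbb{R}^2$. In suitable coordinates $(a,b)$, where $a$ is the degeneration parameter separating the symplectic and complex regimes, the non-complex family $\mathbb{R}^*\times\mathbb{R}$ becomes $\{(a,b):a\neq 0\}$, while the complex-type family $\mathbb{R}^*$ embeds as the punctured axis $\{(0,b):b\neq 0\}$. Their union is precisely $\mathbb{R}^2\setminus\{(0,0)\}$, the only excluded point corresponding to the degenerate datum that fails to be an almost complex structure. This identifies each factor homeomorphically with $\mathbb{R}^2\setminus\{0\}$ and endows $\mathfrak{M}_a(\mathbb{F})$ with the subspace topology inherited from $\mathbb{R}^{2d}$. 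Since $\mathbb{R}^2\setminus\{0\}\simeq S^1$ and homotopy equivalence is preserved under finite products, I would conclude
\[
\mathfrak{M}_a(\mathbb{F})=\prod_{j=1}^d\bigl(\mathbb{R}^2\setminus\{0\}\bigr)\simeq\prod_{j=1}^d S^1=\mathbb{T}^d.
\]

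The step I expect to be the main obstacle is the topological gluing in the third paragraph: one must verify that the complex-type locus sits in the closure of the non-complex family in exactly the right way, so that the abstract disjoint union $\mathbb{R}^*\cup(\mathbb{R}^*\times\mathbb{R})$ genuinely reconstitutes $\mathbb{R}^2\setminus\{0\}$ rather than some other assembly of the two pieces. This amounts to tracking how a non-complex invariant structure degenerates to a complex-type one as the symplectic parameter tends to zero, which rests on the explicit normal forms for $\mathcal{J}_{\alpha_j}$ and on the precise action of the invariant $B$-transformations derived in Section~\ref{S:5}; ensuring that the two parametrizations match up along this boundary is the delicate point, whereas the product decomposition and the final homotopy computation are comparatively routine.
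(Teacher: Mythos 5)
Your overall strategy --- block decomposition over $M$-classes, computation of each local factor $\mathfrak{M}_{\alpha_j}(\mathbb{F})$, and then topologizing each factor as $\mathbb{R}^2\setminus\{(0,0)\}\simeq S^1$ --- is the same as the paper's (Propositions \ref{BModuli2} and \ref{Bmoduli1}), and your first and third paragraphs are essentially sound. However, your second paragraph, which is the heart of the local computation, gets the two families exactly backwards, and the reasoning behind the swap is genuinely wrong rather than a relabeling.

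First, on a $2$-dimensional $M$-class root space the complex-type structures are not ``a copy of $\mathbb{R}^\ast$ parametrized by a nonzero scalar'': by Remark \ref{2DimensionalCases} and Lemma \ref{Aalpha} they form a $2$-parameter family $\mathcal{J}^c_{[\alpha]_M}$ depending on $(c_\alpha,b_\alpha)$ with $c_\alpha\neq 0$. Since each of these is fixed by every invariant $B$-transformation (Lemma \ref{LemmaBcomplex}), the complex-type locus contributes the \emph{two}-dimensional piece $\mathbb{R}^\ast\times\mathbb{R}$ to the quotient. Second, and more seriously, your claim that the $B$-transform taking a noncomplex structure to a symplectic one is ``in general non-invariant'', so that a residual $B$-field direction survives in the quotient, is false: the relevant $2$-form is $B_\alpha=\frac{a}{x}X_\alpha^\ast\wedge X_\beta^\ast$ with $\alpha\sim_M\beta$ (Equation \eqref{AllSymplectic} and Lemma \ref{LemmaBsymplectic}), and this form \emph{is} invariant precisely because the two roots lie in the same $M$-class, so $\mathrm{Ad}^\ast(m)$ acts on $X_\alpha^\ast$ and $X_\beta^\ast$ by the same sign and the wedge is $M$-invariant. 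Hence the whole noncomplex surface $a_\alpha^2=x_\alpha y_\alpha-1$ collapses under invariant $B$-transformations onto the $1$-parameter family of symplectic-type structures, contributing only $\mathbb{R}^\ast$ (the values of $x_\alpha$). The correct assignment is therefore: noncomplex/symplectic $\to\mathbb{R}^\ast$ and complex $\to\mathbb{R}^\ast\times\mathbb{R}$, the opposite of what you wrote. The abstract union $\mathbb{R}^\ast\cup(\mathbb{R}^\ast\times\mathbb{R})$ and the final homotopy type coincidentally agree with the paper's, but your derivation of the second displayed equality rests on two incorrect intermediate claims. Note also that the ``delicate degeneration'' you flag in your last paragraph is not actually an issue in the paper's argument: the topology on each factor is simply \emph{defined} by embedding the two pieces as $\{0\}\times\mathbb{R}^\ast$ and $\mathbb{R}^\ast\times\mathbb{R}$ inside $\mathbb{R}^2\setminus\{(0,0)\}$ and invoking the pasting lemma, not derived from a limit of noncomplex structures degenerating to complex ones.
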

Moreover, as a consequence of this result we obtain an explicit expression for the invariant pure spinor associated to each invariant generalized almost complex structure on $\mathbb{F}$ (see Corollary \ref{PureSpinor}). We also characterize all invariant generalized almost Hermitian structures on these maximal real flags (see Proposition \ref{GKhaler3}). In particular, if $\mathfrak{G}_a(\mathbb{F})$ denotes the quotient space obtained from the set of all invariant generalized metrics modulo the action by invariant $B$-transformations we have:
\begin{proposition*}[\ref{GMetrics}]
Suppose that $\displaystyle \Pi^-=\bigcup_{j=1}^d[\alpha_j]_M$ where $d$ is the number of $M$-equivalence classes. Then
$$\mathfrak{G}_a(\mathbb{F})=\lbrace((\mathbb{R}^+)^2\times\mathbb{R}) \cup ((\mathbb{R}^-)^2\times\mathbb{R})\rbrace_{\alpha_1}\times \cdots \times \lbrace((\mathbb{R}^+)^2\times\mathbb{R}) \cup ((\mathbb{R}^-)^2\times\mathbb{R})\rbrace_{\alpha_d}.$$
\end{proposition*}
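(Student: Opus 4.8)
The plan is to run the same reduction that underlies Theorem \ref{BModuli2.1}, now applied to the positive-definite structures instead of the almost complex ones, and then to classify the invariant generalized metrics one $M$-class at a time. First I would record that an invariant generalized metric is equivalent to a pair $(g,B)$ consisting of an invariant Riemannian metric $g$ on $\mathbb{F}$ and an invariant $2$-form $B$, both of which, by homogeneity, are determined by their values on $T_{b_\Theta}\mathbb{F}$; equivalently, it is an invariant orthogonal involution of $T_{b_\Theta}\mathbb{F}\oplus T_{b_\Theta}^\ast\mathbb{F}$ whose associated pairing is positive-definite. Writing $T_{b_\Theta}\mathbb{F}=\bigoplus_{j}\mathfrak{m}_{[\alpha_j]_M}$ for the decomposition into $M$-class root subspaces, the key observation is that such an involution must respect this decomposition: cross terms between distinct $M$-classes are ruled out exactly as in the proof of the product formula of Theorem \ref{BModuli2.1}, since they would define non-trivial invariant maps between inequivalent $M$-isotypic components. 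This yields at once the product decomposition $\mathfrak{G}_a(\mathbb{F})=\prod_{[\alpha_j]_M\subset\Pi^-}\mathfrak{G}_{\alpha_j}(\mathbb{F})$ and reduces the whole statement to the analysis of a single class.

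Next I would carry out the per-class computation on a fixed $M$-class $\mathfrak{m}_{[\alpha_j]_M}=\mathfrak{g}_{\alpha_j}\oplus\mathfrak{g}_{\beta_j}$ (two-dimensional for the flags under consideration), using the explicit normal forms already obtained in Proposition \ref{GKhaler3}. An invariant generalized metric on $\mathfrak{m}_{[\alpha_j]_M}\oplus\mathfrak{m}_{[\alpha_j]_M}^\ast$ is governed by an invariant metric on the root space — which, by $M$-invariance, is diagonal in the adapted basis and therefore contributes two definite parameters with a common sign — together with the datum measuring the $B$-component of $\mathcal{G}$. The common sign of the two diagonal entries is forced by compatibility with the underlying invariant generalized almost complex structure, and it is precisely what splits the factor into the two connected pieces $((\mathbb{R}^+)^2\times\mathbb{R})$ and $((\mathbb{R}^-)^2\times\mathbb{R})$, the two pieces being related by reversing the orientation of that structure. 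The remaining $\mathbb{R}$-factor is the one-parameter family that survives after one quotients by the invariant $B$-transformations of the class.

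The last and most delicate point is precisely this quotient. Invariant $B$-transformations act on $(g,B)$ by shifting the $2$-form part, so one must check, class by class and using the characters of $M$ on $\mathfrak{g}_{\alpha_j}$ and $\mathfrak{g}_{\beta_j}$ coming from the classification in \cite{PS}, exactly which component of $\mathcal{G}$ can be normalized away and which one remains free; this is where the bookkeeping of Proposition \ref{CaurentB} and Proposition \ref{GKhaler3} does the real work, and I expect it to be the main obstacle, since it is what pins down the surviving $\mathbb{R}$ rather than collapsing the factor to $(\mathbb{R}^\pm)^2$. Once the single-class factor is identified as $((\mathbb{R}^+)^2\times\mathbb{R})\cup((\mathbb{R}^-)^2\times\mathbb{R})$, assembling the product over all $d$ classes gives the stated formula for $\mathfrak{G}_a(\mathbb{F})$.
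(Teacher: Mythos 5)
Your opening reduction to a single $M$-class is sound and is essentially what the paper does (invariant tensors are block-diagonal across classes because distinct classes carry distinct characters of $M$, and the product formula is assembled via Proposition \ref{BModuli2}). The per-class analysis, however, rests on two false premises, and the step you defer as ``the main obstacle'' is exactly where they would make the argument fail. First, $M$-invariance does \emph{not} force the metric on $V_{[\alpha]_M}$ to be diagonal: for $\alpha\sim_M\beta$ every $m\in M$ acts on $V_{[\alpha]_M}\oplus V^\ast_{[\alpha]_M}$ as $\pm\mathrm{id}$ (this is the very observation used to prove Theorem \ref{P1}), so \emph{every} symmetric form on $V_{[\alpha]_M}$ is invariant. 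Indeed the metric the paper obtains in Equation \eqref{GKhaler1.1},
$$g=\left(\begin{array}{cc}\dfrac{c}{x} & -\dfrac{b}{x}\\[6pt] -\dfrac{b}{x} & \dfrac{1+b^2}{cx}\end{array}\right),$$
has a nonzero off-diagonal entry in general, and precisely this off-diagonal freedom---the parameter $b$ of the complex-type member of the Hermitian pair---is the $\mathbb{R}$ factor in the statement. Second, the $\mathbb{R}$ factor is \emph{not} a surviving $B$-component: by part $\iota\iota\iota.$ of Proposition \ref{ModuliM} every generalized metric is $e^{b}\left(\begin{smallmatrix} & g^{-1}\\ g & \end{smallmatrix}\right)e^{-b}$, and the $\mathcal{B}$-action translates the $2$-form part; within one class $\bigwedge^2 V_{[\alpha]_M}^\ast\cong\mathbb{R}$ acts simply transitively on these components, so the $B$-part is gauged away entirely. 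Your appeal to the characters of $M$ from \cite{PS} to ``pin down the surviving $\mathbb{R}$'' cannot do any work here, since inside a single class the characters agree by the very definition of $\sim_M$. Finally, the signs cannot be read off the diagonal entries of the Riemannian metric as you propose---positive definiteness forces those entries to be positive; the two components $(\mathbb{R}^{\pm})^2\times\mathbb{R}$ record the common sign of the parameters $(c,x)$ of the underlying pair $(\mathcal{J}^c_{[\alpha]_M},\mathcal{J}^{nc}_{[\alpha]_M})$ singled out in Proposition \ref{GKhaler3}, and \emph{both} sign choices yield positive definite $g$.

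For comparison, the paper's route is short and avoids all of this: by Proposition \ref{GKhaler3} (together with Lemma \ref{LemmaBsymplectic} and Equation \eqref{AllSymplectic}), up to an invariant $B$-transformation the generalized metric on $V_{[\alpha]_M}$ is the anti-diagonal $\widetilde{G}_\alpha$ of Equation \eqref{GKhaler1.1}, whose Riemannian block is the $g$ above subject to the single constraint $cx>0$ of Lemma \ref{GKhaler2}; in the coordinates $(c,x,b)$ this is literally the factor $((\mathbb{R}^+)^2\times\mathbb{R})\cup((\mathbb{R}^-)^2\times\mathbb{R})$, and the product over the $d$ classes follows from Proposition \ref{BModuli2}. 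If you carried out your deferred computation honestly you would be forced back to this description: the diagonal ansatz undercounts the metrics, the ``surviving $B$-line'' overcounts the quotient, and neither produces the stated answer.
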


It is worth noticing that the only $2$ maximal real flag manifolds mentioned in Theorem \ref{P1} that are not $GM_2$-maximal real flags are those particular cases of type $C_4$ and $D_4$. This is because each $M$-equivalence class root subspace associated to them has dimension $4$. Such a little variation in the dimension increases a lot the difficulty when performing the computations to determine whether or not an invariant generalized almost complex structure is integrable. Therefore, due to how extensive the computations were to claim that there is no integrable invariant generalized almost complex structures on a $GM_2$-maximal real flag, we leave the study of integrability as well as other aspects related to generalized geometry on the maximal real flags of type $C_4$, $D_4$, and the intermediate real flags for a forthcoming work. It may be done by using the present work plus Remark \ref{PartialCases}. Based on the results obtained in this paper and Theorem 2. from \cite{FBS} we predict that: 
\begin{conjecture*}[I]
No maximal real flag manifold of type $C_4$ or $D_4$ admits integrable $K$-invariant generalized almost complex structures.
\end{conjecture*}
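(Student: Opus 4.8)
The plan is to mirror the proof of Theorem \ref{NoIntegrables}, reducing integrability to a finite algebraic problem governed by the root system, but now accounting for the fact that for the maximal flags of type $C_4$ and $D_4$ every $M$-equivalence class root subspace has real dimension $4$ instead of $2$. First I would invoke Proposition \ref{CaurentB} to express the Nijenhuis operator $\Nij$ of an invariant generalized almost complex structure $\mathcal{J}$ purely in terms of the Courant bracket at the origin $b_\Theta$, so that integrability becomes the vanishing of a collection of algebraic expressions in the structure constants of $\mathfrak{g}$ and in the parameters defining $\mathcal{J}$. Since every invariant $\mathcal{J}$ is assembled block-wise over the $M$-classes, it then suffices to evaluate $\Nij(\mathcal{J})$ on triples of root vectors drawn from these blocks.

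Next I would fix, for each of the two flags, the decomposition of the isotropy representation into $M$-equivalence classes following the classification in \cite{PS}, and describe a generic block $\mathfrak{m}_{[\alpha]}$ of dimension $4$. On $\mathfrak{m}_{[\alpha]}\oplus\mathfrak{m}_{[\alpha]}^{\ast}$ (an $8$-dimensional space) the restriction of $\mathcal{J}$ is an orthogonal complex structure compatible with the neutral pairing; I would parametrize all such restrictions explicitly and, imitating Section \ref{S:5}, try to establish a dichotomy: up to invariant $B$-transformation every such restriction is either of complex type or of symplectic type on each block. Once this normal form is in place, the problem splits into two cases that can be handled by different means.

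The main obstacle is precisely the jump in dimension. In the $GM_2$ setting each block contributes only a handful of parameters, so the Nijenhuis conditions reduce to a small, hand-checkable system; in the dimension-$4$ case the orthogonal complex structures on an $8$-dimensional space form a much larger family, and the bookkeeping of which root triples $\alpha,\beta,\alpha+\beta$ contribute nonzero structure constants becomes substantially more involved. I expect the cleanest route to be the reduction sketched above rather than a brute-force enumeration: rule out the complex-type case by appealing to Theorem 2 of \cite{FBS} on the non-integrability of invariant almost complex structures, and rule out the symplectic-type case by exhibiting a non-closed invariant defining form via the Chevalley--Eilenberg differential. Carrying out the dimension-$4$ normal-form reduction rigorously, so as to avoid an unwieldy parameter sweep, is the step I expect to be hardest, and is presumably why the authors record it only as a conjecture.
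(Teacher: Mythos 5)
First, a point of order: the statement you were asked to prove is Conjecture I of the paper. The authors explicitly defer the maximal flags of type $C_4$ and $D_4$ to ``a forthcoming work,'' so there is no proof in the paper to compare against, and your text---which is a strategy outline rather than an argument---would itself have to close several genuine gaps before it counted as one. The most serious gap is the proposed dichotomy. On a block $V_{[\alpha]_M}$ of dimension $4$, every orthogonal complex structure on $V_{[\alpha]_M}\oplus V_{[\alpha]_M}^\ast$ is automatically $M$-invariant ($M$ acts by $\pm 1$ on each class), and such structures can have type $0$, $1$ or $2$; since type is preserved by $B$-transformations (Proposition \ref{ModuliM}), the type-$1$ structures are \emph{neither} of complex type nor $B$-transforms of structures of symplectic type. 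Your dichotomy is valid only in the $GM_2$ situation, where $\dim V_{[\alpha]_M}=2$ leaves just the two possibilities of Remark \ref{2DimensionalCases}; in dimension $4$ it omits an entire stratum of the family of block restrictions.

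Second, even for blocks of pure type, your two global arguments do not combine to cover the general structure. Theorem 2 of \cite{FBS} applies only when \emph{every} block is of complex type, so that $\mathbb{J}$ arises from an honest invariant almost complex structure on $T\mathbb{F}$; a non-closed invariant $2$-form rules out only structures that are globally of symplectic type up to $B$-field. All mixed assignments of types to the three (for $D_4$) or four (for $C_4$) $M$-classes---the analogues of cases \textbf{2}--\textbf{8} in Tables \ref{table2}--\ref{table4}, which is where the paper's proof of Theorem \ref{NoIntegrables} does essentially all of its work, via cross-class Nijenhuis terms and the Jacobi identity---are untouched by either argument. Finally, the reduction step itself is unsound as stated: by Proposition \ref{ModuliM}, conjugation by $e^{B}$ preserves integrability only when $B$ is closed, so you cannot pass to a $B$-transformation normal form, test integrability there, and transfer the conclusion back. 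For type-$0$ structures this can be repaired via the pure spinor $e^{B+i\omega}$ (integrability being equivalent to $dB=d\omega=0$), but for type-$1$ blocks no such shortcut is available, and repairing all three gaps essentially forces the explicit parameter sweep over the Nijenhuis conditions of Proposition \ref{CaurentB} that you hoped to avoid---which is precisely why the authors record this statement as a conjecture rather than a theorem.
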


\begin{conjecture*}[II]
A real flag manifold $\mathbb{F}_\Theta=K/K_\Theta$ admits a $K$-invariant generalized complex structure if and only if it is of type $C_l$ and $\Theta=\lbrace \lambda_{d}-\lambda_{d+1},\cdots, \lambda_{l-1}-\lambda_{l}, 2\lambda_{l}\rbrace$ for $d>1$ odd.
\end{conjecture*}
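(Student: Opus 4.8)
The plan is to leverage the three classification statements already in hand and reduce the conjecture to a finite check over the flags enumerated in Theorem \ref{P1}. By that theorem, any real flag not on the list carries no $K$-invariant generalized \emph{almost} complex structure, so it trivially carries no integrable one; hence both implications of the conjecture are confined to the finitely many maximal and intermediate flags named there. Among the maximal flags, Theorem \ref{NoIntegrables} already excludes every $GM_2$-maximal case, and the discussion preceding Conjecture I records that these are all the maximal flags on the list \emph{except} the type $C_4$ and $D_4$ maximal flags. Consequently the \emph{only if} direction reduces to four residual blocks: the maximal flags of type $C_4$ and $D_4$, the intermediate $B_3$ flag with $\Theta=\{\lambda_1-\lambda_2,\lambda_2-\lambda_3\}$, and the three intermediate $D_4$ flags; the \emph{if} direction is the single assertion that the $C_l$ family with $\Theta=\{\lambda_d-\lambda_{d+1},\dots,2\lambda_l\}$ and $d>1$ odd does admit an integrable structure.

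I would settle the \emph{if} direction by producing a structure of complex type. Theorem 2 of \cite{FBS} identifies precisely this $C_l$ family as the real flags of split type admitting a $K$-invariant integrable almost complex structure $J$; the generalized complex structure of complex type determined by $J$ is then integrable exactly because $J$ is. If one prefers an intrinsic argument independent of \cite{FBS}, the same conclusion follows by checking that the Nijenhuis operator of Proposition \ref{CaurentB} annihilates the associated $+i$-eigenbundle, where the combinatorics of $\Theta=\{\alpha_d,\dots,\alpha_l\}$ with $d$ odd makes the cross-class bracket obstructions in $\Pi^-$ cancel.

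For the \emph{only if} direction on the intermediate $B_3$ and $D_4$ flags I would run the same explicit scheme as in Theorem \ref{NoIntegrables}, which is comprehensive in that it rules out integrability for \emph{every} invariant generalized almost complex structure, not merely those of complex or symplectic type. Concretely: enumerate the $M$-equivalence classes of $\Pi^-$ for each flag, parametrize the general invariant structure by its restriction to each class following the per-class dichotomy established in Section \ref{S:5} (each class contributes either complex-type data or a $B$-transform of symplectic-type data), and evaluate the Courant-bracket condition of Proposition \ref{CaurentB} on the finitely many relevant triples of roots. Since these flags are absent from the \cite{FBS} list, no complex-type choice is integrable, and one checks directly that the symplectic and $B$-transformed branches also fail because there is no invariant nondegenerate $2$-form whose (possibly twisted) differential vanishes.

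The hard part, and the reason the statement stands as a conjecture rather than a theorem, is the residual maximal $C_4$ and $D_4$ block, i.e.\ Conjecture I. Here every $M$-equivalence class root subspace has dimension $4$ instead of $2$, so the set of root triples on which the Courant bracket must be tested grows sharply and couples many more structure parameters at once; the tidy cancellations that organize the $GM_2$ computation of Theorem \ref{NoIntegrables} no longer present themselves. I expect that closing this gap will require either a conceptual recasting of the Nijenhuis-vanishing condition in terms of the $M$-class representation theory developed in \cite{PS}, or a computer-assisted sweep of the finitely many parameter regimes; once Conjecture I is in place, the \emph{only if} direction on the maximal flags follows and the conjecture is proved.
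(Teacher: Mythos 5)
First, note that the statement you set out to prove is not a theorem of this paper at all: it appears only as Conjecture II, and the paper offers no proof, just the supporting evidence you correctly assemble (Theorem \ref{P1} to confine attention to the listed flags, Theorem \ref{NoIntegrables} to eliminate the $GM_2$-maximal cases, and Theorem 2 of \cite{FBS} for the positive direction). Your reduction therefore reproduces the state of knowledge recorded in the paper rather than advancing it, and the one genuinely solid step --- the \emph{if} direction, obtained by feeding an integrable invariant $J$ from \cite{FBS} into the complex-type structure $\mathbb{J}_c$ of Example \ref{ExampleC}, whose integrability is equivalent to that of $J$ --- is sound.

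Everything else has concrete gaps, only some of which you acknowledge. The residual maximal $C_4$ and $D_4$ block is precisely Conjecture I, which you leave open, so the \emph{only if} direction is simply not proved; a ``computer-assisted sweep'' or a ``conceptual recasting'' is a research program, not an argument. Less explicitly acknowledged, and more dangerous, is your plan for the intermediate $B_3$ and $D_4$ flags: you propose to reuse the per-class dichotomy $\mathcal{J}^c_{[\alpha]_M}$ versus $\mathcal{J}^{nc}_{[\alpha]_M}$ of Remark \ref{2DimensionalCases} and Section \ref{S:5}, but that parametrization is derived only for $M$-equivalence classes with $\dim V_{[\alpha]_M}=2$ and under plain $M$-invariance. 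For $\Theta\neq\emptyset$ the isotropy group is $K_\Theta=M\cdot(K_\Theta)_0$, and by Remark \ref{PartialCases} invariance additionally forces $(\mathrm{ad}\oplus\mathrm{ad}^\ast)(X)\circ\mathbb{J}=\mathbb{J}\circ(\mathrm{ad}\oplus\mathrm{ad}^\ast)(X)$ for all $X\in\mathfrak{k}_\Theta$, a strictly stronger constraint that changes the space of admissible blocks; moreover the $M$-classes in $\Pi^-\backslash\langle\Theta\rangle^-$ need not be $2$-dimensional for these $\Theta$ (compare Table \ref{table1} and the $4$-dimensional classes already appearing for maximal $C_4$ and $D_4$). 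So the finite sweep you intend to run is not set up for these flags --- which is exactly why the paper defers the intermediate cases, together with $C_4$ and $D_4$, to forthcoming work. Finally, your dismissal of the symplectic and $B$-twisted branches on the intermediate flags (``no invariant nondegenerate $2$-form whose twisted differential vanishes'') is a bare assertion with no computation behind it, and nothing in the paper supplies one. In sum: the reduction scheme is the right skeleton and matches the evidence the authors cite, but as a proof it fails at the same two places the paper itself identifies as open.
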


\vspace{.2cm}
{\bf Acknowledgments:} We would like to express our more sincere gratitude to Luiz San Martin, Viviana del Barco, Cristi\'an Ortiz, and Sebasti\'an Herrera for their valuable comments and suggestions during several stages of the present work. Varea thanks Instituto de Matem\'atica e Estat\'istica - Universidade de S\~ao Paulo for the support provided while this work was carried out. Valencia was supported by Grant 2020/07704-7 S\~ao Paulo Research Foundation - FAPESP. Varea was supported by Grant 2020/12018-5 S\~ao Paulo Research Foundation - FAPESP.
  
\section{Preliminaries}\label{S:2}
In this section we will introduce notations and summarize the most important concepts and results from both generalized complex geometry and Lie theory which we will be using throughout the paper. For more details the reader is recommended to visit for instance the references \cite{CD,G1,G2,G3,H,K}.
\subsection{Generalized complex structures} Let $M$ be a $2n$-dimensional smooth manifold. The \emph{generalized tangent bundle} on $M$ is defined to be the vector bundle $\mathbb{T}M:=TM\oplus T^\ast M$ whose space of sections is locally identified with $\mathfrak{X}(M)\oplus \Omega^1(M)$. On this vector bundle we have a natural indefinite inner product $\langle\cdot,\cdot\rangle$ of signature $(n,n)$ that is given by
\begin{equation}\label{InnerProduct}
\langle X+\xi,Y+\eta \rangle =\dfrac{1}{2}(\xi(Y)+\eta(X)).
\end{equation}
The bundle $\bigwedge^\bullet T^\ast M$ of differential forms can be viewed as a {\it spinor bundle} for $(\mathbb{T}M,\langle\cdot,\cdot\rangle)$ where the Clifford action of an element $X+\xi\in\mathbb{T}M$ on a differential form $\varphi$ is given by
$$(X+\xi)\cdot \varphi=i_X\varphi+\xi\wedge \varphi.$$ 
It is simple to check that $(X+\xi)^2\varphi=\langle X+\xi,X+\xi\rangle\varphi$. A spinor $\varphi \in \bigwedge^\bullet T^\ast M$ is said to be {\it pure} if its null space
$$L_\varphi=\{ X+\xi\in \mathbb{T}M:\ (X+\xi)\cdot \varphi=0\},$$ 
is maximal isotropic.

At each point $p\in M$, the set $\mathfrak{so}(\mathbb{T}M_p)$ of infinitesimal orthogonal transformations of $\mathbb{T}M_p$ with respect to $\langle\cdot,\cdot\rangle_p$ is identified as a vector space with $\textnormal{End}(T_pM)\oplus \bigwedge^2T_p^\ast M\oplus\bigwedge^2T_pM$. Thus, using the exponential map to the Lie group $\textnormal{SO}(\mathbb{T}M_p)$, we get that associated to each 2-form $B\in \Omega^2(M)$ we can define an orthogonal transformation $e^{B}=\left( 
\begin{array}{cc}
1 & 0\\
B & 1
\end{array}%
\right)$ which acts on $\mathbb{T}M$ by sending $X+\xi\mapsto X+\xi+i_XB$. This transformation will be called a {\it $B$-transformation}.

Let us now consider the extension of $\langle\cdot,\cdot\rangle$ to the complexification $\mathbb{T}M\otimes \mathbb{C}$. We can define a generalized almost complex structure on $M$ in three equivalent ways:
\begin{definition}\cite{H}\label{Def1}
A \emph{generalized almost complex structure} on $M$ is determined by any of the following equivalent objects:
\begin{enumerate}
\item[$\iota.$] An automorphism $\mathbb{J}:\mathbb{T}M\to \mathbb{T}M$ such that $\mathbb{J}^2=-1$ and it is orthogonal with respect to the natural indefinite inner product $\langle\cdot,\cdot\rangle$ defined in \eqref{InnerProduct}.
\item[$\iota\iota.$] A subbundle $L$ of $\mathbb{T}M\otimes \mathbb{C}$ which is maximal isotropic with respect to $\langle\cdot,\cdot\rangle$ and it satisfies $L\cap \overline{L}=\{0\}$.
\item[$\iota\iota\iota.$] A line subbundle of $\bigwedge^\bullet T^\ast M\otimes \mathbb{C}$ generated at each point by a complex pure spinor $\varphi$ such that its null space satisfies $L_\varphi\cap \overline{L}_\varphi=\{0\}$.
\end{enumerate}
\end{definition}
On the one hand, if $\mathbb{J}$ is a generalized almost complex structure on $M$ then it induces a maximal isotropic subbundle in $\mathbb{T}M\otimes \mathbb{C}$ which is given by its associated $+i$-eigenbundle. On the other hand, at each point, a pure spinor $\varphi\in \mathbb{T}M\otimes \mathbb{C}$ must have the form
 $$\varphi=e^{B+i\omega}\theta_1\wedge\cdots\wedge\theta_k,$$
  where $B$ and $\omega$ are the real and imaginary parts of a complex $2$-form and $(\theta_1,\cdots,\theta_k)$ are linearly independent complex 1-forms spanning $\Delta^\circ$ where $\Delta=\pi_1(L)$; see \cite{C}. Here $\pi_1$ denotes the projection from $\mathbb{T}M\otimes\mathbb{C}$ onto $TM\otimes\mathbb{C}$. If we denote by $\Omega=\theta_1\wedge\cdots\wedge\theta_k$, then the requirement $L_\varphi\cap \overline{L}_\varphi=\{0\}$ from part $\iota\iota\iota.$ of Definition \ref{Def1} is equivalent to asking 
  $$(\varphi,\overline{\varphi})=\Omega\wedge \overline{\Omega}\wedge\omega^{n-k}\neq 0.$$
  
We can also speak about the notion of integrability of a generalized almost complex structure in four equivalent ways. For this purpose we introduce the \emph{Courant bracket} on sections of $\mathbb{T}M$ which is defined as
$$[X+\xi,Y+\eta]=[X,Y]+\mathcal{L}_X\eta -\mathcal{L}_Y\xi-\dfrac{1}{2}d(i_X\eta-i_Y\xi).$$
Assuming the notation used in Definition \ref{Def1} we have:
\begin{definition}\cite{H,G1}\label{Def2}
A generalized almost complex structure on $M$ is said to be \emph{integrable} if any of the following equivalent facts occurs:
\begin{enumerate}
\item[$\iota.$] The Nijenhuis tensor $N_\mathbb{J}$ of $\mathbb{J}$ with respect to the Courant bracket vanishes:
$$N_\mathbb{J}(A,B)=[\mathbb{J}A,\mathbb{J}B]-[A,B]-\mathbb{J}[\mathbb{J}A,B]-\mathbb{J}[A,\mathbb{J}B]=0.$$
\item[$\iota\iota.$] The maximal isotropic subbundle $L$ of $\mathbb{T}M\otimes \mathbb{C}$ is involutive with respect to the Courant bracket.
\item[$\iota\iota\iota.$] The Nijenhuis operator $\textnormal{Nij}$ restricted to the maximal isotropic subbundle $L$ of $\mathbb{T}M\otimes \mathbb{C}$ vanishes. That is $\textnormal{Nij}|_L=0$ where
\begin{equation}\label{NijOperator}
\textnormal{Nij}(A,B,C)=\dfrac{1}{3}(\langle[A,B],C \rangle+\langle[B,C],A \rangle+\langle[C,A],B \rangle).
\end{equation}
\item[$\iota\nu.$] There exists a section $X+\xi\in \mathfrak{X}(M)\oplus \Omega^1(M)$ such that $(X+\xi)\cdot \varphi =\textnormal{d}\varphi$.
\end{enumerate}
If this is the case, the geometric object that we obtain is called a \emph{generalized complex structure}.
\end{definition}
The following natural number, which is pointwise defined, will allow us to differentiate a generalized complex structure from another.
\begin{definition}\cite{H,G1}
At each point $p\in M$, the \emph{type} of a generalized almost complex structure $\mathbb{J}$ on $M$ is defined as
$$\textnormal{Type}(\mathbb{J})_p=\textnormal{dim}(\Delta_p^\circ)=n-\dim(\Delta_p),$$
where $\Delta_p=\pi_1(L_p)$. This is not necessarily constant along $M$. When $\textnormal{Type}(\mathbb{J})$ is locally constant we say that $\mathbb{J}$ is \emph{regular}.
\end{definition}

Using generalized complex structures we can establish a notion of generalized K\"ahler structure which generalizes the classical notion of K\"ahler manifold. Due to our purposes it is more appropriate to introduce such a notion in terms of generalized almost Hermitian structures as studied in \cite{CD}. Namely:

\begin{definition}\cite{CD}\cite{G2}\label{ke}
A {\it generalized almost Hermitian structure} on $M$ is a pair $(\mathbb{J},\mathbb{J'})$ of commuting generalized almost complex structures 
such that $G=-\mathbb{J}\mathbb{J'}$ is positive definite. If moreover both $\mathbb{J}$ and $\mathbb{J'}$ are integrable then we say that the pair $(\mathbb{J},\mathbb{J'})$ is a {\it generalized K\"ahler structure}. 
The corresponding  metric  on $\mathbb{T}M$ given by
$$G(X+\alpha, Y+\beta)=\langle \mathbb{J}(X+\alpha),\mathbb{J'}(Y+\beta)\rangle$$
is called a {\it generalized metric}.
\end{definition}

Several equivalent characterizations of a generalized K\"ahler structure can be found in \cite{CD}. One of the most important reasons for which we introduced $B$-transformations above is because of the following result:
\begin{proposition}\cite{H,G1,G2}\label{ModuliM}
Let $B$ be a 2-form on $M$.
\begin{enumerate}
\item[$\iota.$] If $\mathbb{J}$ is a generalized almost complex structure on $M$, then so do $e^{-B}\mathbb{J}e^B$ and $\textnormal{Type}(\mathbb{J})_p=\textnormal{Type}(e^{-B}\mathbb{J}e^B)_p$ for all $p\in M$. If $\mathbb{J}$ is integrable, then so do $e^{-B}\mathbb{J}e^B$ if and only if $B$ is closed.
\item[$\iota\iota.$] If $(\mathbb{J},\mathbb{J'})$ is a generalized almost Hermitian structure on $M$ with generalized metric $G$, then so do $(e^{-B}\mathbb{J}e^B,e^{-B}\mathbb{J}'e^B)$ and its generalized metric is given by $G_B:=e^{-B}Ge^B$.
\item[$\iota\iota\iota.$] Given a generalized almost Hermitian structure $(\mathbb{J},\mathbb{J'})$ on $M$
 with generalized metric $G$ there always exist a Riemannian metric $g$ on $M$ and a $2$-form $b\in \Omega^2(M)$ such that
 $$G=e^{b}\left( 
 \begin{array}{cc}
 & g^{-1}\\
 g &   
 \end{array}%
 \right)e^{-b}.$$
 In particular, the signature of any generalized metric is $(n,n)$.
\end{enumerate}
\end{proposition}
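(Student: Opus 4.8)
The plan is to verify the three items separately, in each case reducing everything to the two elementary facts that $e^{B}$ is orthogonal for $\langle\cdot,\cdot\rangle$ (being the exponential of an element of $\mathfrak{so}(\mathbb{T}M)$) and that it acts by $X+\xi\mapsto X+\xi+i_{X}B$, hence fixes the $TM$-component. For part $\iota.$ I would first observe that $(e^{-B}\mathbb{J}e^{B})^{2}=e^{-B}\mathbb{J}^{2}e^{B}=-1$ and that $e^{-B}\mathbb{J}e^{B}$ is orthogonal as a composition of orthogonal maps, so it is again a generalized almost complex structure; moreover its $+i$-eigenbundle is $e^{-B}L$, where $L$ is that of $\mathbb{J}$. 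Since $\pi_{1}(e^{-B}L)=\pi_{1}(L)=\Delta_{p}$, the dimension $\textnormal{Type}(\mathbb{J})_{p}=n-\dim(\Delta_{p})$ is unchanged. For the integrability clause I would invoke the transformation law of the Courant bracket under a $B$-transformation, $[e^{B}(X+\xi),e^{B}(Y+\eta)]=e^{B}[X+\xi,Y+\eta]+i_{Y}i_{X}dB$, which exhibits $e^{B}$ as an automorphism of the Courant bracket exactly when $dB=0$; hence $e^{-B}L$ is involutive if and only if $B$ is closed.

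For part $\iota\iota.$ the commutation is immediate from the cancellation of the inner factors, $(e^{-B}\mathbb{J}e^{B})(e^{-B}\mathbb{J}'e^{B})=e^{-B}\mathbb{J}\mathbb{J}'e^{B}=e^{-B}\mathbb{J}'\mathbb{J}e^{B}=(e^{-B}\mathbb{J}'e^{B})(e^{-B}\mathbb{J}e^{B})$, and the same cancellation gives $-(e^{-B}\mathbb{J}e^{B})(e^{-B}\mathbb{J}'e^{B})=e^{-B}Ge^{B}=:G_{B}$. Positivity is transferred by orthogonality: for $A\neq0$, since the adjoint of $e^{-B}$ with respect to $\langle\cdot,\cdot\rangle$ is $e^{B}$, one gets $\langle A,G_{B}A\rangle=\langle e^{B}A,\,Ge^{B}A\rangle>0$, so $G_{B}$ is again positive definite and $(e^{-B}\mathbb{J}e^{B},e^{-B}\mathbb{J}'e^{B})$ is a generalized almost Hermitian structure with generalized metric $G_{B}$.

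For part $\iota\iota\iota.$ I would exploit that $G=-\mathbb{J}\mathbb{J}'$ is self-adjoint (using that $\mathbb{J},\mathbb{J}'$ are skew-adjoint and commute) and satisfies $G^{2}=1$, so $\mathbb{T}M=C_{+}\oplus C_{-}$ splits orthogonally into its $(\pm1)$-eigenbundles, on which $\langle A,GA\rangle=\pm\langle A,A\rangle$ forces $\langle\cdot,\cdot\rangle$ to be positive, respectively negative, definite; since $\langle\cdot,\cdot\rangle$ has signature $(n,n)$, both bundles have rank $n$, which gives the asserted signature $(n,n)$. As $\ker\pi_{1}=T^{\ast}M$ is isotropic we have $C_{+}\cap T^{\ast}M=0$, so $\pi_{1}|_{C_{+}}$ is an isomorphism and $C_{+}$ is the graph of a map $g+b\colon TM\to T^{\ast}M$ with $g$ symmetric and $b$ skew; positive definiteness on $C_{+}$ reads $\langle X+(g+b)X,\,X+(g+b)X\rangle=g(X,X)>0$, so $g$ is Riemannian and $b\in\Omega^{2}(M)$. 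Finally $C_{+}=e^{b}(\mathrm{graph}\,g)$, and a direct computation shows that the generalized metric whose $(\pm1)$-eigenbundles are $\{X\pm gX\}$ is the operator $\left(\begin{smallmatrix}0 & g^{-1}\\ g & 0\end{smallmatrix}\right)$; conjugating by $e^{b}$ then yields $G=e^{b}\left(\begin{smallmatrix}0 & g^{-1}\\ g & 0\end{smallmatrix}\right)e^{-b}$.

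The step I expect to be the main obstacle is the integrability clause of part $\iota.$. Items $\iota\iota.$ and $\iota\iota\iota.$ are essentially bookkeeping with the orthogonality and block form of $e^{B}$, but the "if and only if" for integrability rests on the Courant-bracket transformation law and, for the converse direction, on checking that the residual term $i_{Y}i_{X}dB$ genuinely detects all of $dB$ when evaluated on sections of $L$ — this is the one place where a careful argument, rather than a one-line cancellation, is required, and it is exactly the point where closedness of $B$ cannot be dispensed with.
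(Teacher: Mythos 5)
Since the paper does not prove this proposition but quotes it from \cite{H,G1,G2}, your argument has to stand on its own. Your parts $\iota\iota.$ and $\iota\iota\iota.$ are correct and are the standard arguments: cancellation plus orthogonality of $e^{B}$ for the generalized metric $G_B$, and for $\iota\iota\iota.$ the splitting $\mathbb{T}M=C_+\oplus C_-$ into eigenbundles of the self-adjoint involution $G$, the graph description $C_+=\lbrace X+(g+b)X\rbrace$, and conjugation back by $e^{b}$. (One small step you leave implicit: you need $e^{-b}C_-=\mathrm{graph}(-g)$, which follows from $C_-=C_+^{\perp}$, orthogonality of $e^{b}$, and $\mathrm{graph}(g)^{\perp}=\mathrm{graph}(-g)$; with that, the identification of $e^{-b}Ge^{b}$ with the block matrix is immediate.) The algebraic half of part $\iota.$ (square $-1$, orthogonality, eigenbundle $e^{-B}L$, and type preservation via $\pi_1\circ e^{-B}=\pi_1$) is also fine.

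The genuine gap is exactly where you flagged it, and it is worse than you suggest: the ``only if'' in part $\iota.$ does not follow from the transformation law, and in fact no argument can close the gap, because that implication is false in this generality. From $[e^{-B}A,e^{-B}C]=e^{-B}[A,C]-i_{\pi_1(C)}i_{\pi_1(A)}dB$ and involutivity of $L$, what you actually obtain is: $e^{-B}L$ is involutive if and only if $i_{\pi_1(C)}i_{\pi_1(A)}dB\in \Gamma(L\cap T^{\ast}M\otimes\mathbb{C})$ for all $A,C\in\Gamma(L)$, equivalently (using $L\cap T^{\ast}M\otimes\mathbb{C}=\Ann(\Delta)$) if and only if $dB(X,Y,Z)=0$ for all $X,Y,Z\in\Delta=\pi_1(L)$. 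This forces $dB=0$ precisely when $\Delta=TM\otimes\mathbb{C}$, i.e.\ for structures of symplectic type ($k=0$), but not in general: for $\mathbb{J}_c$ of complex type the condition only kills the $(0,3)$ (hence also the $(3,0)$) component of $dB$, so on any complex surface, where $(0,3)$-forms vanish identically, \emph{every} real $2$-form $B$, closed or not, yields an integrable transform $e^{-B}\mathbb{J}_ce^{B}$ --- for instance $B=if\,dz_1\wedge d\bar z_1$ with $f$ real and non-constant. So the statement you can honestly prove is: closedness of $B$ always preserves integrability, and the converse holds under an extra hypothesis such as $\textnormal{Type}(\mathbb{J})=0$ (the case the paper actually exploits in Section \ref{S:5}); as a blanket ``if and only if'' the clause is a careless quotation from the literature, and your proposal inherits rather than repairs this defect.
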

Let $\mathcal{M}_a$ denote the space of all generalized almost complex structures on $M$ and let $\mathcal{K}_a\subseteq \mathcal{M}_a\times \mathcal{M}_a$ denote the set of all generalized almost Hermitian structures on $M$. The set $\mathcal{B}:=\lbrace e^B\vert\ B\in \Omega^2(M)\rbrace$ is an Abelian group with the commutative product $e^{B_1}\cdot e^{B_2}=e^{B_1+B_2}$. By Proposition \ref{ModuliM} we have that the map $\cdot:\mathcal{B}\times \mathcal{M}_a\to \mathcal{M}_a$ given by $e^{B}\cdot \mathbb{J}:=e^{-B}\mathbb{J}e^B$ is a well defined action of $\mathcal{B}$ on $\mathcal{M}_a$. In a similar way we obtain a well defined diagonal action of $\mathcal{B}$ on $\mathcal{K}_a$. These facts motivate the following definition:
\begin{definition}\cite{GVV}\label{ModuliDef}
The \emph{moduli space} of
\begin{enumerate}
\item[$\iota.$] generalized almost complex structures on $M$ under $B$-transformations is defined as the quotient space $\mathfrak{M}_a=\mathcal{M}_a/\mathcal{B}$ determined by the natural action of $\mathcal{B}$ on $\mathcal{M}_a$; and
\item[$\iota\iota.$] generalized almost Hermitian structures on $M$ under $B$-transformations is defined as the quotient space $\mathfrak{K}_a=\mathcal{K}_a/\mathcal{B}$ determined by the diagonal action of $\mathcal{B}$ on $\mathcal{K}_a$.
\end{enumerate}
\end{definition}

These quotient spaces were well described in \cite{GVV} for the case of complex maximal flag manifolds when we consider the action by invariant $B$-transformations. It is simple to see that we may define similar quotient spaces if we consider the set of all generalized complex (K\"ahler) structures on $M$. We just need to act by the subgroup $\mathcal{B}_{cl}\subset \mathcal{B}$ determined by the closed 2-forms on $M$. However, because of our purposes, in this paper we will just deal with the cases introduced in Definition \ref{ModuliDef}.

Let us now introduce some basic but instructive examples.

\begin{example}[Symplectic type $k=0$]\label{ExampleS}
	Given an {\it almost symplectic} manifold $(M,\omega)$ (that is, $\omega$ is a nondegenerate 2-form on $M$, not necessarily closed) we get that
	$$\mathbb{J}_\omega=\left( 
	\begin{array}{cc}
	0 & -\omega^{-1}\\
	\omega & 0
	\end{array}%
	\right),$$
	defines a generalized almost complex structure on $M$. Such a structure is integrable if and only if $\omega$ is closed, in which case
	we say that this is a generalized complex structure  of  {\it symplectic type}. We have that $\mathbb{J}_\omega$ determines a maximal isotropic subbundle $L_\omega=\lbrace X-i\omega(X):\ X\in TM\otimes \mathbb{C}\rbrace$ and a pure spinor line generated by $\varphi=e^{i\omega}$. This is a regular generalized almost complex structure of type $k=0$. We may transform this example by a $B$-transformation and obtain another generalized almost complex structure of type $k=0$ as follows:
	$$e^{-B}\mathbb{J}_\omega e^B=\left( 
	\begin{array}{cc}
	-\omega^{-1}B & -\omega^{-1}\\
	\omega+B\omega^{-1}B& B\omega^{-1}
	\end{array}%
	\right).$$
This has maximal isotropic subbundle $e^{-B}(L_\omega)=\lbrace X-(B+i\omega)(X):\ X\in TM\otimes \mathbb{C}\rbrace$ and pure spinor $e^{B}\cdot\varphi=e^{B+i\omega}$.
\end{example}

\begin{example}[Complex type $k=n$]\label{ExampleC}
	If $(M,J)$ is an almost complex manifold, then we have that 
	$$\mathbb{J}_c=\left( 
	\begin{array}{cc}
	-J & 0\\
	0 & J^\ast
	\end{array}%
	\right),$$
	defines a generalized almost complex structure on $M$, which  is integrable if and only if $J$ is integrable in the classical sense.
	We refer to such a generalized complex structure as being of  {\it complex type}. This is a regular generalized almost complex structure of 
	type $k=n$. We have that $\mathbb{J}_c$ determines a maximal isotropic subbundle $L=TM_{0,1}\oplus T^\ast M_{1,0} \subset \mathbb TM\otimes \mathbb C$ where $TM_{1,0}=\overline{TM_{0,1}}$ is the $+i$-eigenspace of $J$ and a pure spinor line generated by $\varphi=\Omega^{n,0}$ where $\Omega^{n,0}$ is any generator of the $(n,0)$-forms for the almost complex manifold $(M,J)$. This example may be transformed by a $B$-transformation
	$$e^{-B}\mathbb{J}_ce^B=\left( 
	\begin{array}{cc}
	-J & 0\\
	BJ+J^\ast B& J^\ast
	\end{array}%
	\right).$$
The obtained structure has maximal isotropic subbundle $e^{-B}(L)=\lbrace X+\xi-i_XB:\ X+\xi\in TM_{0,1}\oplus T^\ast M_{1,0}\rbrace$ and pure spinor $e^{B}\cdot\varphi=e^B\Omega^{n,0}$.
\end{example}

\begin{example}\label{typical}
	Assume that $(M,J,\omega)$ is a K\"ahler manifold satisfying the conditions of both Examples \ref{ExampleS} and \ref{ExampleC}.
	This means that $J$ is a complex structure on $M$ and $\omega$ is a symplectic form of type $(1,1)$, verifying $\omega J=-J^\ast \omega$,
	which implies that $\mathbb{J}_c\mathbb{J}_\omega=\mathbb{J}_\omega\mathbb{J}_c$. It is simple to check that
	$$G=-\mathbb{J}_c\mathbb{J}_\omega=\left(\begin{array}{cc}
	0 & g^{-1}\\
	g & 0
	\end{array}%
	\right),$$
	defines a generalized  metric on $\mathbb{T} M$, 
	thus obtaining that $(\mathbb{J}_c,\mathbb{J}_\omega)$ is a generalized K\"ahler structure on $M$. 
	Here $g$ denotes the Riemannian metric on $M$ induced by the formula $g(X,Y)=\omega(X,JY)$.
\end{example}

\subsection{Real flag manifolds}
We introduce real flag manifolds by following the nice and brief exposition provided in \cite{FBS}. Let us assume that $\mathfrak{g}$ is the split real form of a complex semisimple Lie algebra $\mathfrak{g}_\mathbb{C}:=\mathfrak{g}\otimes_\mathbb{R}\mathbb{C}$. If $\mathfrak{g}=\mathfrak{k}\oplus\mathfrak{a}\oplus\mathfrak{n}$ is an Iwasawa decomposition, then we get that $\mathfrak{a}$ is a Cartan subalgebra of $\mathfrak{g}$. Let $\Pi$ denote the root system associated to the pair $(\mathfrak{g},\mathfrak{a})$. If $\alpha\in\mathfrak{a}^\ast$ is a root, then we write 
$$\mathfrak{g}_\alpha=\lbrace X\in\mathfrak{g}\vert\ \textnormal{ad}(H)(X)=\alpha(H)X,\ \forall H\in\mathfrak{a}\rbrace,$$
for its corresponding root space, which is $1$-dimensional since $\mathfrak{g}$ is split. Let $\Pi^+$ be a choice of positive roots with corresponding set of positive simple roots $\Sigma$. The set of parabolic subalgebras of $\mathfrak{g}$ is parametrized by the subsets $\Theta$ of $\Sigma$. Namely, given $\Theta\subset \Sigma$, the corresponding parabolic subalgebra is given by
$$\mathfrak{p}_\Theta=\mathfrak{a}\oplus \sum_{\alpha\in \Pi^+}\mathfrak{g}_\alpha\oplus\sum_{\alpha\in \langle\Theta\rangle^{-}}\mathfrak{g}_\alpha=\mathfrak{a}\oplus \sum_{\alpha\in \langle\Theta\rangle^+\cup \langle\Theta\rangle^{-}}\mathfrak{g}_\alpha\oplus\sum_{\alpha\in \Pi^+\backslash \langle\Theta\rangle^{+}}\mathfrak{g}_\alpha,$$
where $\langle \Theta \rangle^\pm$ denotes the set of positive/negative roots generated by $\Theta$.

Let $G$ denote the Lie group of inner automorphisms of $\mathfrak{g}$ which is connected and generated by $\textnormal{Exp}\ \textnormal{ad}(\mathfrak{g})$ inside $\textnormal{GL}(\mathfrak{g})$. Let $K$ be the maximal compact subgroup of $G$. This is generated by $\textnormal{ad}(\mathfrak{k})$. The standard parabolic subgroup $P_\Theta$ of $G$ associated to $\Theta$ is the normalizer of $\mathfrak{p}_\Theta$ in $G$ and its associated \emph{flag manifold} is defined as the homogeneous space $\mathbb{F}_\Theta:=G/P_\Theta$. Given that $K$ acts transitively on $\mathbb{F}_\Theta$ we may also identify $\mathbb{F}_\Theta=K/K_\Theta$ where $K_\Theta=P_\Theta\cap K$. Fixing an arbitrary origin $b_\Theta$ in $\mathbb{F}_\Theta$, we identity the tangent space $T_{b_\Theta}\mathbb{F}_\Theta$ with the nilpotent Lie algebra
$$\mathfrak{n}_\Theta^{-}=\sum_{\alpha\in \Pi^-\backslash\langle\Theta\rangle^{-}}\mathfrak{g}_\alpha.$$

It is important to notice that under this identification the isotropy representation of $K_\Theta$ on $T_{b_\Theta}\mathbb{F}_\Theta$ is just the adjoint representation since $\mathfrak{n}_\Theta^{-}$ is normalized by $K_\Theta$. The Lie algebra $\mathfrak{k}_\Theta$ of $K_\Theta$ is given by
$$\mathfrak{k}_\Theta=\sum_{\alpha\in \langle\Theta\rangle^+\cup \langle\Theta\rangle^{-}}(\mathfrak{g}_\alpha\oplus \mathfrak{g}_{-\alpha})\cap \mathfrak{k}.$$
The compactness of $K$ implies that $\mathfrak{k}_\Theta$ has a reductive complement $\mathfrak{m}_\Theta$ so that $\mathfrak{k}=\mathfrak{k}_\Theta\oplus \mathfrak{m}_\Theta$ and  $T_{b_\Theta}\mathbb{F}_\Theta$ is also identified with $\mathfrak{m}_\Theta$. Indeed, the map $X_\alpha\mapsto X_\alpha-X_{-\alpha}$ for $\alpha\in \Pi^-\backslash\langle\Theta\rangle^{-}$ is an invariant map from $\mathfrak{n}_\Theta^{-}$ to $\mathfrak{m}_\Theta$. Throughout this paper we will call \emph{isotropy representation} to the representation of $K_\Theta$ on either $\mathfrak{n}_\Theta^{-}$ or $\mathfrak{m}_\Theta$ without making any difference or special mention. In some cases, we will even use $\displaystyle \mathfrak{n}_\Theta^{+}=\sum_{\alpha\in \Pi^-\backslash\langle\Theta\rangle^{-}}\mathfrak{g}_{-\alpha}$ instead $\mathfrak{n}_\Theta^{-}$.

Let $M$ be the centralizer of $\mathfrak{a}$ in $K$. Then $K_\Theta=M\cdot (K_\Theta)_0$ where $(K_\Theta)_0$ is the connected component of the identity in $K_\Theta$. Thus $M$ acts on $T_{b_\Theta}\mathbb{F}_\Theta$ by restricting the isotropy representation of $K_\Theta$. The group $M$ is finite and acts on each root space $\mathfrak{g}_\alpha$ leaving it invariant. 
\begin{definition}\cite{PS}
Two roots $\alpha$ and $\beta$ are called $M$-\emph{equivalent}, which we will write as $\alpha\sim_M\beta$, if the representations of $M$ on $\mathfrak{g}_\alpha$ and $\mathfrak{g}_\beta$ are equivalent.
\end{definition}
See \cite{PS} for more details about the description of the $M$-equivalence classes. Let $\langle \cdot,\cdot \rangle$ be the Cartan--Killing form of $\mathfrak{g}$. As the restriction of $\langle \cdot,\cdot \rangle$ to $\mathfrak{a}$ is non-degenerate, for every root $\alpha\in\mathfrak{a}^\ast$, we denote by $H_\alpha$ the unique element in $\mathfrak{a}$ such that $\alpha(\cdot)=\langle H_\alpha,\cdot\rangle$. Thus:

\begin{remark}\cite{PS}
If $\mathfrak{g}$ is a split real form of a complex semisimple Lie algebra $\mathfrak{g}_\mathbb{C}$, then $M$ is the finite abelian group given by 
$$M=\lbrace m_\gamma=\exp(\pi iH^\vee_\gamma)\vert\ \gamma\in \Pi \rbrace,$$
where $H^\vee_\gamma=\dfrac{2H_\gamma}{\langle \gamma,\gamma\rangle}$ is the co-root associated to $\gamma\in\Pi$. For the formula above, the exponential $\exp(t iH^\vee_\gamma)$ is taken in the complex subgroup $\textnormal{Aut}(\mathfrak{g}_\mathbb{C})$ but for $t=\pi$ we get $m_\gamma\in \textnormal{Aut}(\mathfrak{g})$ which acts on the root spaces $\mathfrak{g}_\alpha$ by $\textnormal{Ad}(m_\gamma)(X)=\pm X$. Moreover, two roots $\alpha$ and $\beta$ are $M$-equivalent if and only if for every $\gamma\in\Pi$ we get that
\begin{equation}\label{Mrelation}
\dfrac{2\langle \gamma,\alpha\rangle}{\langle \gamma,\gamma\rangle}\equiv  \dfrac{2\langle \gamma,\beta\rangle}{\langle \gamma,\gamma\rangle}\ \textnormal{mod}\ 2.
\end{equation}
\end{remark}

For the purposes of next sections we fix from here a Weyl basis for $\mathfrak{g}$ which amounts to taking $X_\alpha\in\mathbb{F}$ such that $\langle X_\alpha,X_{-\alpha}\rangle=1$ and $[X_\alpha,X_\beta]=m_{\alpha,\beta}X_{\alpha+\beta}$ with  $m_{\alpha,\beta}\in\mathbb{R}$, $m_{-\alpha,-\beta}=-m_{\alpha,\beta}$, and $m_{\alpha,\beta}=0$ is $\alpha+\beta$ is not a root.
\section{Invariant generalized almost complex structures}\label{S:3}
From the general theory of invariant tensors on homogeneous spaces, we get that $K$-invariant generalized almost complex structures on the flag manifold $\mathbb{F}_\Theta=K/K_\Theta$ are in one-to-one correspondence with complex structures $\mathbb{J}:T_{b_\Theta} \mathbb{F}_\Theta\oplus T_{b_\Theta}^\ast \mathbb{F}_\Theta\to T_{b_\Theta} \mathbb{F}_\Theta\oplus T_{b_\Theta}^\ast \mathbb{F}_\Theta$ such that:

\begin{enumerate}
\item[$\iota.$] $\langle \mathbb{J}(x) , \mathbb{J}(y)\rangle=\langle x,y \rangle$ for all $x,y\in T_{b_\Theta} \mathbb{F}_\Theta\oplus T_{b_\Theta}^\ast \mathbb{F}_\Theta$ where $\langle \cdot,\cdot\rangle$ is the natural indefinite inner product of signature $(n,n)$ defined on $T_{b_\Theta} \mathbb{F}_\Theta\oplus T_{b_\Theta}^\ast \mathbb{F}_\Theta$, which actually, after fixing a Weyl basis, agrees with the Cartan--Killing form; and
\item[$\iota\iota.$] $(\textnormal{Ad}\oplus \textnormal{Ad}^\ast)(g)\circ \mathbb{J}=\mathbb{J}\circ (\textnormal{Ad}\oplus \textnormal{Ad}^\ast)(g)$ for all $g\in K_\Theta=M\cdot(K_\Theta)_0$.
\end{enumerate}
Making use of the Cartan--Killing form we can identify $\mathfrak{n}_\Theta^{+}$ with the dual space $T_{b_\Theta}^\ast \mathbb{F}_\Theta$. Therefore, we are interested in determining $K_\Theta$-invariant orthogonal complex structures $\mathbb{J}:\mathfrak{n}_\Theta^{-}\oplus \mathfrak{n}_\Theta^{+}\to \mathfrak{n}_\Theta^{-}\oplus \mathfrak{n}_\Theta^{+}$. As we know that generalized almost complex structures are in relation with maximal isotropic subspaces, we should look at the invariant maximal isotropic subspaces of $\mathfrak{n}_\Theta^{-}\oplus \mathfrak{n}_\Theta^{+}$ with respect to the Cartan--Killing form $\langle \cdot,\cdot\rangle$.

\begin{remark}\label{PartialCases}
Recall that $K_\Theta=M\cdot(K_\Theta)_0$ and suppose that $\mathbb{J}:\mathfrak{n}_\Theta^{-}\oplus \mathfrak{n}_\Theta^{+}\to \mathfrak{n}_\Theta^{-}\oplus \mathfrak{n}_\Theta^{+}$ is just $M$-invariant. A straightforward computation allows us to prove that $\mathbb{J}$ is also $K_\Theta$-invariant if and only if it is $(K_\Theta)_0$-invariant, which, by the connectedness, is equivalent to saying that $(\textnormal{ad}\oplus\textnormal{ad}^\ast)(X)\circ \mathbb{J}=\mathbb{J}\circ (\textnormal{ad}\oplus\textnormal{ad}^\ast)(X)$ for all $X\in\mathfrak{k}_\Theta$.
\end{remark}
For every $\alpha\in \Pi^-\backslash\langle\Theta\rangle^{-}$ we denote its $M$-equivalence class by $[\alpha]_M$. We also denote 
$$\displaystyle V_{[\alpha]_M}=\sum_{\alpha \sim_M \beta} \mathfrak{g}_\beta\qquad\textnormal{and}\qquad\displaystyle V^\ast _{[\alpha]_M}= \sum_{\alpha \sim_M \beta} \mathfrak{g}^\ast _{\beta} =\sum_{\alpha \sim_M \beta} \mathfrak{g}_{-\beta}.$$ 
\begin{lemma}\label{decomposition}
Each $M$-invariant subspace $L$ in $\mathfrak{n}_\Theta^{-}\oplus \mathfrak{n}_\Theta^{+}$ has the form
\begin{equation*}
L=\sum_{[\alpha]_M}L\cap (V_{[\alpha]_M} \oplus V^\ast _{[\alpha]_M}).
\end{equation*}
\end{lemma}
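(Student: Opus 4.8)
\emph{Plan.} The plan is to recognize the claimed identity as nothing more than the isotypic decomposition of the $M$-module $\mathfrak{n}_\Theta^-\oplus\mathfrak{n}_\Theta^+$, combined with the elementary fact that any submodule of a completely reducible module splits compatibly with the isotypic components. Since $M$ is a finite (abelian) group, its action is semisimple and the only real obstacle is the bookkeeping needed to see that each $V_{[\alpha]_M}\oplus V^\ast_{[\alpha]_M}$ is exactly one full isotypic summand.

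First I would set up the isotypic picture. Because $M$ is finite and abelian, its irreducible representations are one-dimensional, and by the Remark above $M$ acts on each one-dimensional root space $\mathfrak{g}_\beta$ by a sign $\Ad(m)|_{\mathfrak{g}_\beta}=\pm 1$; thus $\mathfrak{g}_\beta$ is $M$-irreducible and affords a character $\chi_\beta\colon M\to\{\pm1\}$. By definition $\chi_\beta=\chi_{\beta'}$ exactly when $\beta\sim_M\beta'$, and replacing $\beta$ by $-\beta$ in the congruence \eqref{Mrelation} (which only changes a sign, hence nothing mod $2$) shows $\chi_{-\beta}=\chi_\beta$, i.e. $-\beta\sim_M\beta$. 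Consequently the root spaces occurring in $\mathfrak{n}_\Theta^-\oplus\mathfrak{n}_\Theta^+$ that afford a fixed character are precisely those gathered in $V_{[\alpha]_M}\oplus V^\ast_{[\alpha]_M}$ (both the parts $\mathfrak g_\beta\subset\mathfrak n_\Theta^-$ and the parts $\mathfrak g_{-\beta}\subset\mathfrak n_\Theta^+$ with $\beta\sim_M\alpha$), and distinct $M$-classes give distinct characters. This yields the isotypic decomposition
$$\mathfrak{n}_\Theta^-\oplus\mathfrak{n}_\Theta^+=\bigoplus_{[\alpha]_M}\left(V_{[\alpha]_M}\oplus V^\ast_{[\alpha]_M}\right).$$

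Next I would prove the submodule statement using the isotypic projectors. For a character $\chi=\chi_\alpha$ set $p_\chi=\frac{1}{|M|}\sum_{m\in M}\chi(m)^{-1}\,\Ad(m)$, which projects onto $V_{[\alpha]_M}\oplus V^\ast_{[\alpha]_M}$ along the other summands; note that since $\chi(m)\in\{\pm1\}$ these operators are already defined over $\mathbb R$, so no complexification is needed and $L$ may be taken real or complex indifferently. Given an $M$-invariant subspace $L$ and $v\in L$, write $v=\sum_{[\alpha]_M}v_{[\alpha]_M}$ with $v_{[\alpha]_M}\in V_{[\alpha]_M}\oplus V^\ast_{[\alpha]_M}$. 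As $L$ is stable under $M$ it is stable under each $p_\chi$, so $v_{[\alpha]_M}=p_{\chi_\alpha}(v)\in L$, and since it also lies in $V_{[\alpha]_M}\oplus V^\ast_{[\alpha]_M}$ we obtain $v_{[\alpha]_M}\in L\cap(V_{[\alpha]_M}\oplus V^\ast_{[\alpha]_M})$. Hence $L\subseteq\sum_{[\alpha]_M}L\cap(V_{[\alpha]_M}\oplus V^\ast_{[\alpha]_M})$; the reverse inclusion is immediate, which gives the claim. The argument is formal once the first step is settled, so I expect the only point requiring genuine care to be verifying that $V_{[\alpha]_M}\oplus V^\ast_{[\alpha]_M}$ is a \emph{complete} isotypic component, i.e. that passing from $\beta$ to $-\beta$ preserves the character and that no root space outside this sum shares $\chi_\alpha$ — exactly where the explicit description of $M$ and the characterization \eqref{Mrelation} are used.
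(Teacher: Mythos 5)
Your proof is correct and takes essentially the same route as the paper: both identify $V_{[\alpha]_M}\oplus V^\ast_{[\alpha]_M}$ as the simultaneous eigenspaces (isotypic components) of the $(\Ad\oplus\Ad^\ast)(M)$-action — the paper via the explicit eigenvalue $e^{i\pi \alpha(H^\vee_\gamma)}$ and the observation that $X_\alpha$ and $X_{-\alpha}$ transform by the same scalar, you via the sign characters $\chi_\beta$ and the fact that $\chi_{-\beta}=\chi_\beta$ follows from the congruence \eqref{Mrelation} — and then conclude that an $M$-invariant subspace splits along these components. Your averaging-projector argument merely makes rigorous the final splitting step that the paper asserts in one sentence, which is a small but welcome refinement rather than a different method.
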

\begin{proof}
Consider $\alpha\in\Pi^-\backslash\langle\Theta\rangle^{-}$ and $m_\gamma=\exp(\pi iH^\vee_\gamma)=e^{\pi iH^\vee_\gamma}\in M$. Given that $\textnormal{Ad}^\ast(m_\gamma)=\textnormal{Ad}(m_\gamma^{-1})$ we get that
$$(\textnormal{Ad}\oplus \textnormal{Ad}^\ast)(m_\gamma)(X_\alpha+X_{-\alpha})=\textnormal{Ad}(e^{\pi iH^\vee_\gamma})+\textnormal{Ad}(e^{-\pi iH^\vee_\gamma}).$$
Now, as we also have that $\textnormal{Ad}(e^Z)=e^{\textnormal{ad}(Z)}$ it follows that
\begin{eqnarray*}
	(\textnormal{Ad}\oplus \textnormal{Ad}^\ast)(m_\gamma)(X_\alpha+X_{-\alpha})& = & e^{i\pi \textnormal{ad}(H^\vee_\gamma)}X_\alpha+e^{-i\pi \textnormal{ad}(H^\vee_\gamma)}X_{-\alpha},\\
	& = & e^{i\pi \alpha(H^\vee_\gamma)}X_\alpha+e^{i\pi \alpha(H^\vee_\gamma)}X_{-\alpha}\\
	& = & e^{i\pi \alpha(H^\vee_\gamma)}(X_\alpha+X_{-\alpha})\in \mathfrak{g}_\alpha\oplus \mathfrak{g}_{-\alpha}.
\end{eqnarray*}
Then, $e^{i\pi \alpha(H^\vee_\gamma)}$ is an eigenvalue of $(\textrm{Ad} \oplus \textrm{Ad}^\ast )(M)$. Note that, from \cite{PS}, we know that $\alpha \sim_M \beta$ if and only if the congruence \eqref{Mrelation} holds true which is actually equivalent to asking that $e^{i\pi \alpha(H^\vee_\gamma)}=e^{i\pi \beta(H^\vee_\gamma)}$ for all $\gamma\in \Pi$. Therefore, $\displaystyle V_{[\alpha]_M} \oplus V^\ast _{[\alpha]_M}=\sum_{\alpha \sim_M \beta} \mathfrak{g}_\beta\oplus \mathfrak{g}_{-\beta}$ is a generalized eigenspace of $(\textrm{Ad} \oplus \textrm{Ad}^\ast )(M)$ associated with the eigenvalue $e^{i\pi \alpha(H^\vee_\gamma)}$. Since $L$ is invariant, it follows that $L$ is written as a sum of generalized eigenspaces of $(\textrm{Ad} \oplus \textrm{Ad}^\ast )(M)$, that is, 
$$L=\sum_{[\alpha]_M}L\cap (V_{[\alpha]_M} \oplus V^\ast _{[\alpha]_M}).$$
\end{proof}

\begin{lemma}\label{isotropic}
Let $\displaystyle L=\sum_{[\alpha]_M}L\cap (V_{[\alpha]_M} \oplus V^\ast _{[\alpha]_M})$ be an $M$-invariant subspace. Then $L$ is isotropic if and only if for each $\alpha$ we have that  
\begin{equation*}
L_{[\alpha]_M} = L\cap(V_{[\alpha]_M}\oplus V^\ast _{[\alpha]_M}),
\end{equation*} 
is isotropic. Moreover, if $L$ is maximal isotropic, then $L_{[\alpha]_M}$ is also maximal isotropic for each $\alpha$.
\end{lemma}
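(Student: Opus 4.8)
Lemma \ref{isotropic} — plan.

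The plan is to exploit the fact that the Cartan--Killing form turns the decomposition of Lemma \ref{decomposition} into an \emph{orthogonal} block decomposition, after which both assertions reduce to a short computation together with a dimension count. The whole argument rests on one structural observation, which I would establish first: for distinct classes $[\alpha]_M\neq[\alpha']_M$, the subspaces $V_{[\alpha]_M}\oplus V^\ast_{[\alpha]_M}$ and $V_{[\alpha']_M}\oplus V^\ast_{[\alpha']_M}$ are mutually orthogonal. To see this, recall that $\langle\mathfrak{g}_\beta,\mathfrak{g}_{\beta'}\rangle=0$ unless $\beta+\beta'=0$, so the only way two root vectors from these two blocks can pair nontrivially is if one class contains a root $\beta$ and the other contains $-\beta$. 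But the Cartan integers $\tfrac{2\langle\gamma,\beta\rangle}{\langle\gamma,\gamma\rangle}$ are integers, hence $-\tfrac{2\langle\gamma,\beta\rangle}{\langle\gamma,\gamma\rangle}\equiv\tfrac{2\langle\gamma,\beta\rangle}{\langle\gamma,\gamma\rangle}\ \mathrm{mod}\ 2$ for every $\gamma\in\Pi$; by the criterion \eqref{Mrelation} this means $-\beta\sim_M\beta$, so a root and its negative always lie in the same $M$-class. Thus no root of $[\alpha]_M$ can be the negative of a root of $[\alpha']_M$, and the two blocks are orthogonal. Each individual block $V_{[\alpha]_M}\oplus V^\ast_{[\alpha]_M}$ is moreover nondegenerate, being a direct sum of hyperbolic pairs $\mathfrak{g}_\beta\oplus\mathfrak{g}_{-\beta}$ on which the form is the standard split pairing of signature $(1,1)$.

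Given this block-orthogonality, the equivalence is immediate. Writing any $x,y\in L$ as $x=\sum_{[\alpha]_M}x_{[\alpha]_M}$ and $y=\sum_{[\alpha]_M}y_{[\alpha]_M}$ with components in the respective $L_{[\alpha]_M}$, all cross terms $\langle x_{[\alpha]_M},y_{[\alpha']_M}\rangle$ with $[\alpha]_M\neq[\alpha']_M$ vanish, so $\langle x,y\rangle=\sum_{[\alpha]_M}\langle x_{[\alpha]_M},y_{[\alpha]_M}\rangle$. Hence if every $L_{[\alpha]_M}$ is isotropic then $L$ is isotropic; conversely, since each $L_{[\alpha]_M}$ is contained in $L$, isotropy of $L$ forces isotropy of each $L_{[\alpha]_M}$.

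For the maximality claim I would argue by dimension. The form on $\mathfrak{n}_\Theta^-\oplus\mathfrak{n}_\Theta^+$ has signature $(n,n)$, so a maximal isotropic $L$ has $\dim L=n$; likewise the form on each block has signature $(d_{[\alpha]},d_{[\alpha]})$ with $d_{[\alpha]}=\dim V_{[\alpha]_M}$, so every isotropic subspace of a block has dimension at most $d_{[\alpha]}$, and $\sum_{[\alpha]_M}d_{[\alpha]}=n$. Since the decomposition of Lemma \ref{decomposition} is a direct sum, $\dim L=\sum_{[\alpha]_M}\dim L_{[\alpha]_M}$, and each $L_{[\alpha]_M}$ is already known to be isotropic, so $\dim L_{[\alpha]_M}\le d_{[\alpha]}$. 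Comparing $n=\sum_{[\alpha]_M}\dim L_{[\alpha]_M}\le\sum_{[\alpha]_M}d_{[\alpha]}=n$ forces equality in each summand, whence every $L_{[\alpha]_M}$ attains the maximal isotropic dimension $d_{[\alpha]}$ and is therefore maximal isotropic. The only genuinely delicate point is the orthogonality of distinct blocks — more precisely the observation that $\beta\sim_M-\beta$ — while everything after it is bookkeeping.
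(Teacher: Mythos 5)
Your proof is correct and follows essentially the same route as the paper's: orthogonality of the blocks $V_{[\alpha]_M}\oplus V^\ast_{[\alpha]_M}$ with respect to the Killing form gives both directions of the isotropy equivalence, and maximality of each $L_{[\alpha]_M}$ follows from the decomposition (your dimension count simply makes explicit what the paper asserts in one line). The only difference is that the step you single out as delicate, namely $\beta\sim_M-\beta$, is not actually needed: the $M$-classes occurring here are subsets of $\Pi^-\backslash\langle\Theta\rangle^{-}$, so distinct classes are disjoint sets of negative roots and no root of one class can be the negative of a root of another, which is why the paper gets away with only invoking $\langle\mathfrak{g}_\alpha,\mathfrak{g}_\beta\rangle=0$ unless $\beta=-\alpha$.
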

\begin{proof}
If $L$ is isotropic then it follows immediately that $L_{[\alpha]_M}$ is also isotropic.  Conversely, suppose that $L_{[\alpha]_M}$ is isotropic for each $\alpha$. Thus, $\langle X,Y\rangle=0$ for all $X,Y\in L_{[\alpha]_M}$. If $\alpha \nsim_M \beta$, then we have that $\langle X,Y\rangle=0$ for all $X \in V_{[\alpha]_M} \oplus V^\ast _{[\alpha]_M}$ and $Y\in V_{[\beta]_M} \oplus V^\ast _{[\beta]_M}$, because $\langle \mathfrak{g}_\alpha,\mathfrak{g}_\beta\rangle=0$ unless $\beta=-\alpha$ which implies that $L$ is isotropic. Finally, if $L$ is maximal isotropic, then the fact that $\displaystyle L = \sum_{[\alpha]_M} L_{[\alpha]_M}$ ensures that each $L_{[\alpha]_M}$ is maximal isotropic.
\end{proof}
Lemmas \ref{decomposition} and \ref{isotropic} imply that if $\displaystyle L = \sum_{[\alpha]_M} L_{[\alpha]_M}$ is an $M$-invariant maximal isotropic subspace in $\mathfrak{n}_\Theta^{-}\oplus \mathfrak{n}_\Theta^{+}$, then $\displaystyle L\otimes \mathbb{C} = \sum_{[\alpha]_M} (L_{[\alpha]_M}\otimes\mathbb{C})$ is an $M$-invariant maximal isotropic subspace of in $(\mathfrak{n}_\Theta^{-}\oplus \mathfrak{n}_\Theta^{+})\otimes \mathbb{C}$ with respect to $\langle\cdot,\cdot\rangle$ extended to the complexification. Therefore, because of the bijective correspondence between maximal isotropic subspaces and generalized complex structures, we have that if $\mathbb{J}$ is an $M$-invariant generalized almost complex structure on $\mathbb{F}_\Theta$, then 
\[
\mathbb{J} = \sum_{[\alpha]_M} \mathbb{J}_{[\alpha]_M},
\]
where $\mathbb{J}_{[\alpha]_M}$ is the restriction of $\mathbb{J}$ to the subspace $V_{[\alpha]_M} \oplus V^\ast _{[\alpha]_M}$. Here $\mathbb{J}_{[\alpha]_M}$ is a generalized complex structure on $V_{[\alpha]_M}$ with associated $M$-invariant maximal isotropic subspace $L_{[\alpha]_M}\otimes\mathbb{C}$. As consequence, we have that the dimension of $V_{[\alpha]_M}$ must be even and hence the amount of roots in the every $M$-equivalence class $[\alpha]_M$ must be even.
 
Summing up, using the description of the $M$-equivalence classes found in \cite{PS} we get:
\begin{theorem}\label{P1}
A real flag manifold $\mathbb{F}_\Theta$ admits an $M$-invariant generalized almost complex structure if and only if the amount of roots in the every $M$-equivalence class $[\alpha]_M$ is even. As consequence, the real flag manifolds $\mathbb{F}_\Theta$ admitting $K_\Theta$-invariant generalized almost complex structures are those with $\Theta$ described in Table \ref{table1}.
\end{theorem}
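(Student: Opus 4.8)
The plan is to reduce the whole statement to a block-wise parity condition, exploiting the decomposition already obtained in Lemmas \ref{decomposition} and \ref{isotropic}. Any $M$-invariant maximal isotropic subspace $L\subset\mathfrak{n}_\Theta^-\oplus\mathfrak{n}_\Theta^+$ splits as $L=\sum_{[\alpha]_M}L_{[\alpha]_M}$ with each $L_{[\alpha]_M}$ maximal isotropic inside $V_{[\alpha]_M}\oplus V^\ast_{[\alpha]_M}$, so that an $M$-invariant generalized almost complex structure is nothing but a family $\mathbb{J}=\sum_{[\alpha]_M}\mathbb{J}_{[\alpha]_M}$ of orthogonal complex structures, one on each neutral-signature space $V_{[\alpha]_M}\oplus V^\ast_{[\alpha]_M}$. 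Thus the existence problem decouples over the $M$-classes, and the equivalence will follow once a single block is controlled.

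For necessity I would use the standard fact that an orthogonal complex structure on a real inner-product space of neutral signature $(r,r)$ forces $r$ to be even: averaging $\langle\cdot,\cdot\rangle$ against a $\mathbb{J}$-invariant positive definite metric produces a symmetric operator $S$ commuting with $\mathbb{J}$, and the positive and negative definite eigenspaces of $S$ are $\mathbb{J}$-invariant, hence even-dimensional. Applied to the block $V_{[\alpha]_M}\oplus V^\ast_{[\alpha]_M}$, whose signature is $(\dim V_{[\alpha]_M},\dim V_{[\alpha]_M})$, this gives that $\dim V_{[\alpha]_M}$ is even; since $\mathfrak{g}$ is split every $\mathfrak{g}_\beta$ is one-dimensional, so $\dim V_{[\alpha]_M}$ equals the number of roots in $[\alpha]_M$. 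This recovers the necessity already anticipated in the discussion preceding the statement.

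For sufficiency, assume every class $[\alpha]_M$ has an even number of roots, so each $V_{[\alpha]_M}$ is even-dimensional. I would then put a generalized almost complex structure on each block, say of complex type as in Example \ref{ExampleC} after choosing any linear complex structure on $V_{[\alpha]_M}$ (or of symplectic type as in Example \ref{ExampleS}), and set $\mathbb{J}=\sum_{[\alpha]_M}\mathbb{J}_{[\alpha]_M}$. The point making this legitimate is that $M$ acts on the whole block by a single scalar: by \eqref{Mrelation} and the computation in the proof of Lemma \ref{decomposition}, every $m_\gamma\in M$ acts as $e^{i\pi\alpha(H^\vee_\gamma)}=\pm1$ on all of $V_{[\alpha]_M}\oplus V^\ast_{[\alpha]_M}$. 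Since scalars commute with every endomorphism, each $\mathbb{J}_{[\alpha]_M}$ is automatically $M$-invariant, and hence so is $\mathbb{J}$. This establishes the announced equivalence for $M$-invariant structures.

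Finally, to pass to the $K_\Theta$-invariant classification recorded in Table \ref{table1}, I would combine the even-class criterion with the explicit description of the $M$-equivalence classes from \cite{PS}, working type by type through the split real forms and reading off, in each case, for which $\Theta$ every class has even cardinality. For a maximal flag $\Theta=\emptyset$ one has $\mathfrak{k}_\Theta=0$, $(K_\Theta)_0=\{e\}$ and $K_\Theta=M$, so $K_\Theta$-invariance coincides with $M$-invariance and the criterion directly singles out the types $A_3$, $B_2$, $G_2$, $C_l$ with $l$ even and $D_l$ with $l\ge 4$. For the intermediate flags the even-class bookkeeping must be supplemented by a check of $(K_\Theta)_0$-invariance, which by Remark \ref{PartialCases} amounts to commutation with $(\ad\oplus\ad^\ast)(\mathfrak{k}_\Theta)$ on each even block, and this jointly selects the few listed cases in types $B_3$, $C_l$ and $D_4$. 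I expect the real work to lie exactly here: the soft block-decoupling argument is immediate, but the even-cardinality bookkeeping is entirely governed by the intricate $M$-class tables of \cite{PS}, and confirming the $(K_\Theta)_0$-invariance for the intermediate flags is the delicate step that pins down Table \ref{table1}.
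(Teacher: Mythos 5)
Your proposal is correct and follows essentially the same route as the paper: the block decomposition from Lemmas \ref{decomposition} and \ref{isotropic}, necessity from the fact that an orthogonal complex structure on the neutral-signature space $V_{[\alpha]_M}\oplus V^\ast_{[\alpha]_M}$ forces $\dim V_{[\alpha]_M}$ to be even, and sufficiency from the observation that $(\textnormal{Ad}\oplus\textnormal{Ad}^\ast)(M)$ acts by $\pm 1$ on each block, so any orthogonal complex structure there is automatically $M$-invariant and direct sums finish the job. The only differences are minor: you spell out the averaging argument behind the even-dimension fact (the paper simply invokes the standard correspondence), and you explicitly flag the $(K_\Theta)_0$-invariance verification for the intermediate flags, a step the paper leaves implicit when it passes to Table \ref{table1} via the classification of $M$-classes in \cite{PS}.
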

\begin{proof}
We only have to prove the sufficiency. Let $[\alpha]_M$ be an $M$-equivalence class such that the subspace $\displaystyle V_{[\alpha]_M}=\sum_{\alpha \sim_M \beta} \mathfrak{g}_\beta$ has even dimension. Recall that $\textnormal{Ad}(m)(X_\beta)=\pm X_\beta$ for all $X_\beta\in\mathfrak{g}_\beta$ and $m\in M$. In this equality, the sign does not change when $\beta$ runs
through an $M$-equivalence class. Given that $\textnormal{Ad}^\ast(m)=\textnormal{Ad}(m^{-1})$ we also have that $(\textnormal{Ad}\oplus \textnormal{Ad}^\ast)(m)(X_\beta+X_{-\beta})=\pm(X_\beta+X_{-\beta})$. Thus, we get that $(\textnormal{Ad}\oplus \textnormal{Ad}^\ast)(m)=\pm 1$ on $V_{[\alpha]_M} \oplus V^\ast _{[\alpha]_M}$. Therefore, all orthogonal complex structure on $V_{[\alpha]_M} \oplus V^\ast _{[\alpha]_M}$ are clearly $M$-invariant. Finally, by taking direct sum of orthogonal complex structures on the several $V_{[\alpha]_M} \oplus V^\ast _{[\alpha]_M}$ we obtain $M$-invariant generalized almost complex structures on  $\mathfrak{n}_\Theta^-=T_{b_\Theta}\mathbb{F}_\Theta$.
\end{proof}
\begin{table}
	\begin{tabular}{c|r}
		$\textnormal{Lie algebra type}$ & $\Theta$ \\
		\hline
	$A_3$	& $\emptyset$\\
	$B_2$ & $\emptyset$\\
	$B_3$ & $\lbrace \lambda_1-\lambda_2,\lambda_2-\lambda_3\rbrace$\\
	$C_4$ & $\emptyset$, $\lbrace \lambda_1-\lambda_2,\lambda_3-\lambda_4\rbrace$, $\lbrace \lambda_3-\lambda_4,2\lambda_4\rbrace$\\
	$C_l$ with $l\neq 4$ & $\emptyset$ only when $l$ is even\\
	& $\lbrace\lambda_d-\lambda_{d+1},\cdots, \lambda_{l-1}-\lambda_l,2\lambda_l\rbrace$ for $1<d\leq l-1$ with $d$ odd, for all $l$ \\
	$D_4$ & $\emptyset$, $\lbrace \lambda_1-\lambda_2,\lambda_3-\lambda_4\rbrace$, $\lbrace \lambda_1-\lambda_2,\lambda_3+\lambda_4\rbrace$, $\lbrace \lambda_3-\lambda_4,\lambda_3+\lambda_4\rbrace$\\
	& $\lbrace \lambda_1-\lambda_2,\lambda_2-\lambda_3,\lambda_3+\lambda_4\rbrace$, $\lbrace \lambda_2-\lambda_3,\lambda_3-\lambda_4,\lambda_3+\lambda_4\rbrace$\\
	$D_l$ with $l\leq 5$ & $\emptyset$, $\lbrace\lambda_d-\lambda_{d+1},\cdots,\lambda_{l-1}-\lambda_l,\lambda_{l-1}+\lambda_l \rbrace$ for $1<d\leq l-1$\\
	$G_4$ & $\emptyset$
	\end{tabular}
\caption{$M$-equivalence classes in $\Pi^-\backslash\langle\Theta\rangle^{-}$ with even elements.}\label{table1}
\end{table}
\subsection{The Courant bracket at the origin $b_\Theta$}
Recall that the Courant bracket on sections of $\mathbb{T}M$ is given by
$$[X+\xi,Y+\eta]=[X,Y]+\mathcal{L}_X\eta -\mathcal{L}_Y\xi-\dfrac{1}{2}\textnormal{d}(i_X\eta-i_Y\xi).$$
We want to describe this bracket at the origin $b_\Theta$ of $\mathbb{F}_\Theta$. Let $\langle \cdot,\cdot\rangle$ denote the Cartan--Killing form on $\mathfrak{g}$. As we said before, the tangent and the cotangent spaces of $\mathbb{F}_\Theta$ at $b_\Theta$ are respectively identified with the spaces
$$\mathfrak{n}_\Theta^{-}=\sum_{\alpha\in \Pi^-\backslash\langle\Theta\rangle^{-}}\mathfrak{g}_\alpha\qquad\textnormal{and}\qquad (\mathfrak{n}_\Theta^{-})^\ast= \sum_{\alpha\in \Pi^-\backslash\langle\Theta\rangle^{-}}\mathfrak{g}_\alpha^\ast=\sum_{\alpha\in \Pi^-\backslash\langle\Theta\rangle^{-}}\mathfrak{g}_{-\alpha}.$$
The identification of $\mathfrak{g}_\alpha^\ast$ with $\mathfrak{g}_{-\alpha}$ is obtained by using the Cartan--Killing form. Indeed, as $\langle X_\alpha, X_{-\alpha}\rangle=1$ we have that every element $X_\alpha^\ast\in\mathfrak{g}_\alpha^\ast$ can be represented as $X_\alpha^\ast=\langle X_{-\alpha},\cdot\rangle\approx X_{-\alpha}\in \mathfrak{g}_{-\alpha}$.

For each $X\in \mathfrak{n}_\Theta^{-}$, we denote its respective element in $(\mathfrak{n}_\Theta^{-})^\ast$ by $X^\ast=k^\flat(X^-):=\langle X^-,\cdot\rangle$, where if $\displaystyle X=\sum_{\alpha\in \Pi^-\backslash\langle\Theta\rangle^{-}} X_\alpha$, then $\displaystyle X^-=\sum_{\alpha\in \Pi^-\backslash\langle\Theta\rangle^{-}} X_{-\alpha}$. Therefore, for all $X,Y\in \mathfrak{n}_\Theta^{-}$ and $Z^\ast, W^\ast\in (\mathfrak{n}_\Theta^{-})^\ast$, we have:

\begin{eqnarray*}
	[X+Z^\ast,Y+W^\ast] &=& [X,Y]+\mathcal{L}_{X}W^\ast-\mathcal{L}_{Y}Z^\ast-\dfrac{1}{2}\textnormal{d}(i_{X}W^\ast-i_{Y}Z^\ast)\\
	& = & [X,Y]+\mathcal{L}_{X}k^\flat(W^-)-\mathcal{L}_{Y}k^\flat(Z^-)-\dfrac{1}{2}\textnormal{d}(i_{X}k^\flat(W^-)-i_{Y}k^\flat(Z^-))\\
	& = & [X,Y]+\textnormal{d}(i_Xk^\flat(W^-))+i_X(\textnormal{d}k^\flat(W^-))-\textnormal{d}(i_Yk^\flat(Z^-))-i_Y(\textnormal{d}k^\flat(Z^-))\\
	& - & \dfrac{1}{2}\textnormal{d}(i_{X}k^\flat(W^-))+\dfrac{1}{2}\textnormal{d}(i_{Y}k^\flat(Z^-))\\
	& = & [X,Y]+\dfrac{1}{2}\textnormal{d}(i_Xk^\flat(W^-))+i_X(\textnormal{d}k^\flat(W^-))-\dfrac{1}{2}\textnormal{d}(i_Yk^\flat(Z^-))-i_Y(\textnormal{d}k^\flat(Z^-)).
\end{eqnarray*}

Given that the Cartan--Killing form $\langle\cdot,\cdot\rangle$ induces a bi-invariant metric on $G$, we have that its Levi--Civita connection is given by
$$\nabla_{X}Y=-\nabla_{Y}X=\dfrac{1}{2}[X,Y],\quad \textnormal{for all}\quad X,Y\in\mathfrak{g},$$
and moreover, each element $X\in \mathfrak{g}$ is a Killing vector field, that  is, $\mathcal{L}_{X}\langle\cdot,\cdot\rangle=0$. In terms of the Levi--Civita connection this means:
$$\langle \nabla_YX,Z\rangle+\langle Y,\nabla_ZX\rangle=\langle [Y,X],Z\rangle+\langle Y,[Z,X]\rangle=0,\quad\textnormal{for all}\quad X,Y\in\mathfrak{g}.$$

On the one hand, $\textnormal{d}(i_Xk^\flat(W^-))=\textnormal{d}\langle W^-,X\rangle$. So, for all $A\in \mathfrak{n}_\Theta^{-}$:
$$\textnormal{d}\langle W^-,X\rangle (A)=A\cdot \langle W^-,X\rangle=\langle \nabla_AW^-,X\rangle+\langle W^-,\nabla_AX\rangle=-\langle \nabla_{W^-}A,X\rangle-\langle W^-,\nabla_XA\rangle=0.$$
On the other hand, $i_X(\textnormal{d}k^\flat(W^-))=\textnormal{d}k^\flat(W^-)(X)$. Thus, for all $A\in \mathfrak{n}_\Theta^{-}$:
\begin{eqnarray*}
	\textnormal{d}k^\flat(W^-)(X)(A) & = & \textnormal{d}k^\flat(W^-)(X,A)\\
	& = & X\cdot k^\flat(W^-)(A)-A\cdot k^\flat(W^-)(X)-k^\flat(W^-)([X,A])\\
	& = & X\cdot \langle W^-,A\rangle-A\cdot \langle W^-,X\rangle- \langle W^-,[X,A]\rangle\\
	& = & -2 \langle W^-,\nabla_XA\rangle\\
	& = & 2 \langle W^-,\nabla_AX\rangle\\
	& = & -2 \langle \nabla_{W^-}X,A\rangle\\
	& = & k^\flat([X,W^-])(A),
\end{eqnarray*}
that is, $i_X(\textnormal{d}k^\flat(W^-))=k^\flat([X,W^-])$. So, it follows that the Courant bracket reduces to
$$[X+Z^\ast,Y+W^\ast]=[X,Y]+k^\flat([X,W^-])-k^\flat([Y,Z^-]).$$

With similar computations we obtain that $k^\flat([X,W^-])=\textnormal{ad}^\ast_X(W^\ast)$ where $\textnormal{ad}^\ast$ denotes the co-adjoint representation of $\mathfrak{g}$. Therefore, we get that
\begin{equation*}
[X+Z^\ast,Y+W^\ast]=[X,Y]+\textnormal{ad}^\ast_X(W^\ast)-\textnormal{ad}^\ast_Y(Z^\ast).
\end{equation*}
%With this expression for the Courant bracket we can get a nicer expression for the Nijenhuis operator. 
%Recall that the Nijenhuis operator associated to the Courant Bracket is defined as
%$$\textnormal{Nij}(A,B,C)=\dfrac{1}{3}\left(\langle [A,B],C\rangle+\langle [B,C],A\rangle+\langle [C,A],B\rangle \right),$$
%for all sections $A,B,C\in \mathfrak{X}(M)\oplus \Omega^1(M)$. A straightforward computation allows us to get that
%\begin{equation*}
%\textnormal{Nij}(A_1+A_2^\ast,B_1+B_2^\ast,C_1+C_2^\ast)=\dfrac{1}{2}\left( -\langle B_2^-,[A_1,C_1] \rangle +\langle C_2^-,[A_1,B_1] \rangle+\langle A_2^-,[B_1,C_1] \rangle\right),
%\end{equation*}
%for all $A_1,B_1,C_1\in \mathfrak{n}_\Theta^{-}$ and $A_2^\ast,B_2^\ast,C_2^\ast\in (\mathfrak{n}_\Theta^{-})^\ast$.
Summing up,
\begin{proposition}\label{CaurentB}
At the origin $b_0$ of $\mathbb{F}$ we have that the Courant bracket takes the form
\begin{equation}\label{CourantBracket}
[X+Z^\ast,Y+W^\ast]=[X,Y]+\textnormal{ad}^\ast_X(W^\ast)-\textnormal{ad}^\ast_Y(Z^\ast),
\end{equation}
and the Nijenhuis operator
\begin{equation}\label{NijenhuisTensor}
\textnormal{Nij}(A_1+A_2^\ast,B_1+B_2^\ast,C_1+C_2^\ast) = \dfrac{1}{2}\left(\langle A_2^-,[B_1,C_1] \rangle+\langle B_2^-,[C_1,A_1] \rangle +\langle C_2^-,[A_1,B_1] \rangle\right),
\end{equation}
for all $X,Y,A_1,B_1,C_1\in \mathfrak{n}_\Theta^{-}$ and $Z^\ast, W^\ast A_2^\ast,B_2^\ast,C_2^\ast\in (\mathfrak{n}^{-}_\Theta)^\ast$.
\end{proposition}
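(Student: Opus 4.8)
The plan is to treat the two displayed formulas separately: the bracket identity \eqref{CourantBracket} is essentially the computation already carried out in the paragraphs preceding the statement, and \eqref{NijenhuisTensor} then follows from it by a short but sign-sensitive manipulation of the definition \eqref{NijOperator}. For \eqref{CourantBracket} I would simply package the preceding display. The essential inputs are the identification $X^\ast=k^\flat(X^-)=\langle X^-,\cdot\rangle$ of $(\mathfrak n_\Theta^-)^\ast$ with $\mathfrak n_\Theta^+$, the fact that the Cartan--Killing form induces a bi-invariant metric on $G$ whose Levi--Civita connection satisfies $\nabla_XY=\tfrac12[X,Y]$, and the Killing property $\langle[Y,X],Z\rangle+\langle Y,[Z,X]\rangle=0$. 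With these one checks that the exact term $\textnormal{d}(i_Xk^\flat(W^-))=\textnormal{d}\langle W^-,X\rangle$ vanishes along $\mathfrak n_\Theta^-$ at the origin, while $i_X(\textnormal{d}k^\flat(W^-))=k^\flat([X,W^-])$; substituting into the general Courant bracket collapses the Lie-derivative and differential terms into $k^\flat([X,W^-])-k^\flat([Y,Z^-])$, which equals $\textnormal{ad}^\ast_X(W^\ast)-\textnormal{ad}^\ast_Y(Z^\ast)$ once one records $\textnormal{ad}^\ast_X(W^\ast)=k^\flat([X,W^-])$.

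For the Nijenhuis operator I would feed \eqref{CourantBracket} into \eqref{NijOperator} and first compute a single cyclic term $\langle[A,B],C\rangle$ with $A=A_1+A_2^\ast$, $B=B_1+B_2^\ast$, $C=C_1+C_2^\ast$. The bracket gives $[A,B]=[A_1,B_1]+\textnormal{ad}^\ast_{A_1}(B_2^\ast)-\textnormal{ad}^\ast_{B_1}(A_2^\ast)$, so pairing with $C$ through the inner product \eqref{InnerProduct} yields $\tfrac12$ times the sum of $C_2^\ast([A_1,B_1])$ and $(\textnormal{ad}^\ast_{A_1}(B_2^\ast)-\textnormal{ad}^\ast_{B_1}(A_2^\ast))(C_1)$. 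Rewriting each coadjoint contribution via $\textnormal{ad}^\ast_X(W^\ast)(C_1)=-\langle W^-,[X,C_1]\rangle$ and invoking the $\textnormal{ad}$-invariance of the Killing form converts all three pieces into the symmetric expression $\langle A_2^-,[B_1,C_1]\rangle+\langle B_2^-,[C_1,A_1]\rangle+\langle C_2^-,[A_1,B_1]\rangle$, so that $\langle[A,B],C\rangle=\tfrac12(\langle A_2^-,[B_1,C_1]\rangle+\langle B_2^-,[C_1,A_1]\rangle+\langle C_2^-,[A_1,B_1]\rangle)$.

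The step I regard as the only genuinely delicate point is the observation that this expression for $\langle[A,B],C\rangle$ is already \emph{cyclically symmetric} in $(A,B,C)$: the three summands merely permute among themselves under $A\mapsto B\mapsto C\mapsto A$. Consequently the three terms appearing in \eqref{NijOperator} all coincide, the prefactor $\tfrac13$ cancels against their number, and the surviving $\tfrac12$ comes entirely from the inner product \eqref{InnerProduct}, giving precisely \eqref{NijenhuisTensor}. The care required is purely bookkeeping: one must track the sign conventions of $\textnormal{ad}^\ast$ and of the invariance identity so that the three contributions add rather than cancel, and verify that the cyclic symmetry is exact rather than approximate — this is what prevents the naive $\tfrac13$ from persisting in the final formula.
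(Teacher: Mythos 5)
Your proposal is correct and follows essentially the same route as the paper: the bracket formula \eqref{CourantBracket} is exactly the computation carried out in the text preceding the proposition (Levi--Civita connection of the bi-invariant metric, Killing property, vanishing of the exact term, and $i_X(\textnormal{d}k^\flat(W^-))=k^\flat([X,W^-])$), and the paper's proof of \eqref{NijenhuisTensor} is precisely the substitution of \eqref{CourantBracket} into \eqref{NijOperator} that you perform. Your explicit verification that $\langle[A,B],C\rangle$ is cyclically symmetric, so the factor $\tfrac13$ cancels against the three equal terms and only the $\tfrac12$ from the inner product survives, is the correct bookkeeping behind the paper's phrase ``directly follows.''
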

\begin{proof}
The expression \eqref{NijenhuisTensor} for the Nijenhuis operator at the origin directly follows from replacing in \eqref{NijOperator} the expression \eqref{CourantBracket} that  we got for the Courant bracket.
\end{proof}
Using a Weyl basis for $\mathfrak{g}$ it is simple to check that the expression we obtained for the Nijenhuis operator \eqref{NijenhuisTensor} just depends on triples of roots ($\alpha,\beta,\alpha+\beta$). Thus, we have:
\begin{corollary}\label{usefulformula1}
For every triple of roots $(\alpha,\beta,\alpha+\beta)$ we have that
$$\Nij(X_\alpha,X_\beta,X_{\alpha+\beta} ^\ast)=\dfrac{1}{2}m_{\alpha,\beta},$$ 
and all other possible combination are zero unless these are cyclic permutations of the elements $X_\alpha, X_\beta, X^\ast _{\alpha+\beta}$.
\end{corollary}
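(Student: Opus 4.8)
The plan is to evaluate the explicit formula \eqref{NijenhuisTensor} directly on the prescribed basis elements and reduce everything to the Weyl basis relations. First I would read the three arguments as elements of $\mathfrak{n}_\Theta^{-}\oplus(\mathfrak{n}_\Theta^{-})^\ast$: both $X_\alpha$ and $X_\beta$ lie purely in the tangent part (so their cotangent components vanish), while $X_{\alpha+\beta}^\ast$ lies purely in the cotangent part, with associated negative representative $X_{-(\alpha+\beta)}$ under the identification $\mathfrak{g}_\gamma^\ast\cong\mathfrak{g}_{-\gamma}$ induced by the Cartan--Killing form. Substituting into \eqref{NijenhuisTensor}, the first two summands vanish because the cotangent components of $X_\alpha$ and $X_\beta$ are zero, leaving only
$$\Nij(X_\alpha,X_\beta,X_{\alpha+\beta}^\ast)=\dfrac{1}{2}\langle X_{-(\alpha+\beta)},[X_\alpha,X_\beta]\rangle.$$
From the Weyl basis I would then use $[X_\alpha,X_\beta]=m_{\alpha,\beta}X_{\alpha+\beta}$ together with the normalization $\langle X_{-(\alpha+\beta)},X_{\alpha+\beta}\rangle=1$ to conclude that this value equals $\tfrac{1}{2}m_{\alpha,\beta}$.

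To establish that all other combinations vanish, I would argue by multilinearity that it suffices to evaluate $\Nij$ on triples of basis elements, each argument being either a tangent vector $X_\mu$ or a cotangent covector $X_\lambda^\ast$. Inspecting \eqref{NijenhuisTensor}, every one of its three summands pairs the negative representative coming from the cotangent component of one argument against the bracket of the tangent components of the other two. Hence a summand can only be nonzero when exactly one argument contributes a covector $X_\lambda^\ast$ and the remaining two contribute vectors $X_\mu,X_\nu$, in which case it equals $\tfrac{1}{2}\langle X_{-\lambda},[X_\mu,X_\nu]\rangle=\tfrac{1}{2}m_{\mu,\nu}\langle X_{-\lambda},X_{\mu+\nu}\rangle$. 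Since $\langle\mathfrak{g}_{-\lambda},\mathfrak{g}_{\mu+\nu}\rangle=0$ unless $\lambda=\mu+\nu$, and $m_{\mu,\nu}=0$ whenever $\mu+\nu$ fails to be a root, this is nonzero precisely when $(\mu,\nu,\mu+\nu)$ is a triple of roots and the covector slot carries $X_{\mu+\nu}^\ast$. This pins down the nonvanishing evaluations to the configuration $(X_\alpha,X_\beta,X_{\alpha+\beta}^\ast)$, up to the placement of the covector slot.

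Finally I would record the cyclic behavior: the three summands of \eqref{NijenhuisTensor} are permuted cyclically when the arguments are cyclically permuted, so $\Nij$ is invariant under cyclic permutations and the stated value $\tfrac{1}{2}m_{\alpha,\beta}$ is shared by the three cyclic arrangements of $(X_\alpha,X_\beta,X_{\alpha+\beta}^\ast)$, while a transposition instead reverses one of the brackets and flips the overall sign, consistent with $m_{\beta,\alpha}=-m_{\alpha,\beta}$. There is essentially no serious obstacle here, as the content is a direct substitution; the only points demanding care are the bookkeeping of the dual identification $\mathfrak{g}_\gamma^\ast\cong\mathfrak{g}_{-\gamma}$, in order to produce the correct representative $X_{-(\alpha+\beta)}$ inside the Killing pairing, and the clean extraction of the surviving summand when sorting basis inputs into tangent and cotangent slots.
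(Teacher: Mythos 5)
Your proposal is correct and follows essentially the same route as the paper: a direct substitution of basis elements into formula \eqref{NijenhuisTensor}, where only the term $\tfrac{1}{2}\langle X_{-(\alpha+\beta)},[X_\alpha,X_\beta]\rangle$ survives and the Weyl basis normalization $\langle X_{\alpha+\beta},X_{-(\alpha+\beta)}\rangle=1$ yields $\tfrac{1}{2}m_{\alpha,\beta}$. Your additional bookkeeping for the vanishing cases (orthogonality of root spaces unless $\lambda=\mu+\nu$, and $m_{\mu,\nu}=0$ when $\mu+\nu$ is not a root) and the cyclic-invariance remark simply make explicit what the paper leaves as ``a direct computation.''
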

\begin{proof}
	This result follows from a direct computation using formula \eqref{NijenhuisTensor}. In particular, for the nonzero cases
	\begin{eqnarray*}
		\Nij(X_\alpha,X_\beta,X_{\alpha+\beta} ^\ast) & = & \dfrac{1}{2}\langle X_{-(\alpha+\beta)},[X_\alpha,X_\beta]\rangle \\
		& = & \dfrac{1}{2} \langle X_{-(\alpha+\beta)},m_{\alpha,\beta}X_{\alpha+\beta} \rangle \\ 
		& = & \dfrac{1}{2}m_{\alpha,\beta}.
	\end{eqnarray*}
\end{proof}

It is wroth noticing that from Equation \eqref{CourantBracket} we have that the Courant bracket for the basic vectors of a Weyl basis is given by
\[
[X_{\alpha},X_{\beta}] = \left\lbrace \begin{array}{ll}
m_{\alpha,\beta}X_{\alpha+\beta}, \quad \textrm{if } \alpha+\beta \textrm{ is a root} \\
0, \quad \textrm{otherwise}
\end{array}\right.  \quad
[X_{\alpha},X^\ast _{\beta}] = \left\lbrace \begin{array}{ll}
m_{\alpha,-\beta}X^\ast _{\beta-\alpha}, \quad \textrm{if } \beta-\alpha \textrm{ is a root} \\
0, \quad \textrm{otherwise}
\end{array} \right. 
\]
and $[X^\ast _{\alpha},X^\ast _{\beta}] = 0$.
 
\section{Maximal real flag manifolds}\label{S:4}
Maximal real flag manifolds are those with $\Theta=\emptyset$. In this case the isotropy subgroup $K_\Theta$ is the centralizer of $\mathfrak{a}$ in $K$, that is, $K_\Theta=M$. Here we will drop up all the sub-index $\Theta$. Recall that $\textnormal{Ad}(m)(X_\alpha)=\pm X_\alpha$ for all $X_\alpha\in\mathfrak{g}_\alpha$ and $m\in M$. Given that $\textnormal{Ad}^\ast(m)=\textnormal{Ad}(m^{-1})$ we get that $(\textnormal{Ad}\oplus \textnormal{Ad}^\ast)(m)(X_\alpha+X_{-\alpha})=\pm(X_\alpha+X_{-\alpha})$. Therefore, invariant generalized almost complex structures on $\mathbb{F}$ are complex structures $\mathbb{J}:\mathfrak{n}^{-}\oplus \mathfrak{n}^{+}\to \mathfrak{n}^{-}\oplus \mathfrak{n}^{+}$ that are orthogonal with respect to $\langle\cdot,\cdot\rangle$. The representation matrix of $\langle\cdot,\cdot\rangle$ restricted to $\mathfrak{n}^{-}\oplus \mathfrak{n}^{+}$ with respect to a Weyl basis is 
$$Q=\left( 
\begin{array}{cc}
0 & I_d\\
I_d & 0
\end{array}%
\right),$$
where $I_d$ denotes the $d\times d$ identity matrix with $d$ the amount of elements in $\Pi^-$. Thus, we are interested in describing structures $\mathbb{J}$ such that $\mathbb{J}^2=-1$ and $\mathbb{J}^TQ\mathbb{J}=Q$. More precisely, according to what we did in the previous section, we just need to describe the structures $\mathbb{J}_{[\alpha]_M}$ on $V_{[\alpha]_M}$ such that $\mathbb{J}_{[\alpha]_M}^2=-1$ and $\mathbb{J}_{[\alpha]_M}^TQ\mathbb{J}_{[\alpha]_M}=Q$.

For the case of maximal real flag manifolds the $M$-equivalence classes of non-negative roots are:
\begin{enumerate}
\item[-] {\it Case} $A_3$:
$$\lbrace \lambda_2-\lambda_1,\lambda_4-\lambda_3\rbrace,\quad\lbrace \lambda_3-\lambda_1,\lambda_4-\lambda_2\rbrace,\quad\textnormal{and}\quad \lbrace \lambda_4-\lambda_1,\lambda_3-\lambda_2\rbrace.$$ 
\item[-] {\it Case} $B_2$:
$$\lbrace \lambda_2-\lambda_1,-\lambda_2-\lambda_1\rbrace\quad\textnormal{and}\quad \lbrace -\lambda_1,-\lambda_2\rbrace.$$ 
\item[-] {\it Case} $C_4$:
$$\lbrace \pm\lambda_2-\lambda_1,\pm\lambda_4-\lambda_3\rbrace,\quad\lbrace \pm\lambda_3-\lambda_1,\pm\lambda_4-\lambda_2\rbrace,\quad \lbrace \pm\lambda_4-\lambda_1,\pm\lambda_3-\lambda_2\rbrace,\quad \textnormal{and}$$
$$\lbrace -2\lambda_i:\ i=1,\cdots,4\rbrace.$$ 
\item[-] {\it Case} $C_l$ with $l$ even and $l\geq 6$:
$$A.\ \lbrace\pm \lambda_s-\lambda_i\rbrace,\ 1\leq i<s\leq l\quad \textnormal{and}\quad B.\ \lbrace 2\lambda_1,\cdots,2\lambda_l\rbrace.$$
\item[-] {\it Case} $D_4$:
$$\lbrace \pm\lambda_2-\lambda_1,\pm\lambda_4-\lambda_3\rbrace,\quad\lbrace \pm\lambda_3-\lambda_1,\pm\lambda_4-\lambda_1\rbrace,\quad \textnormal{and}\quad\lbrace \pm\lambda_4-\lambda_1,\pm\lambda_3-\lambda_2\rbrace.$$
\item[-] {\it Case} $D_l$ with $l\geq 5$:
$$\lbrace\pm \lambda_j-\lambda_i\rbrace,\ 1\leq i<j\leq l.$$
\item[-] {\it Case} $G_2$:
$$\lbrace -\lambda_1,-2\lambda_2-\lambda_1\rbrace,\quad\lbrace -\lambda_2-\lambda_1,-3\lambda_2-\lambda_1\rbrace\quad \textnormal{and}\qquad\quad\lbrace -\lambda_2,-3\lambda_2-2\lambda_1\rbrace.$$
\end{enumerate}
Motivated by this we set up the following definition.
\begin{definition}
A maximal real flag manifold of those described in Theorem \ref{P1} is said to be a $GM_2$-\emph{maximal real flag} if it admits at least an $M$-equivalence class root subspace of dimension $2$.
\end{definition}
Note that the only $2$ maximal real flag manifolds admitting invariant generalized almost complex structure that are not $MG_2$-maximal real flags are those particular cases of type $C_4$ and $D_4$. This is because each $M$-equivalence class root subspace associated to them has dimension $4$.
\subsection{Integrability on $GM_2$-maximal real flags}\label{Sub:Integrability}
We will start by analyzing the integrability of the invariant generalized almost complex structures on maximal flags $\mathbb{F}$ obtained from the cases $A_3$, $B_2$, $D_l$ with $l\geq 5$, and $G_2$. We will end the subsection by studying the integrability on maximal flags $\mathbb{F}$ of type $C_l$ with $l$ even and $l\geq 6$.

\begin{remark}\label{2DimensionalCases}

Following the computations made in \cite{VS}, for almost all these cases we just have two possibilities for $\mathbb{J}_{[\alpha]_M}$ since $\dim V_{[\alpha]_M}=2$ for all of them. Namely, 
$$\mathcal{J}^c_{[\alpha]_M}=\left( 
\begin{array}{cccc}
b_\alpha & \dfrac{-(1+b_\alpha^2)}{c_\alpha} & 0 & 0\\ 
c_\alpha & -b_\alpha & 0 & 0\\
0 & 0 & -b_\alpha & -c_\alpha \\
0 & 0 & \dfrac{1+b_\alpha^2}{c_\alpha} & b_\alpha
\end{array}%
\right):=\left( 
\begin{array}{cc}
-J^c & 0\\
0 & (J^c)^\ast
\end{array}%
\right)\qquad \textbf{complex type},$$
or else
$$\mathcal{J}^{nc}_{[\alpha]_M}=\left( 
\begin{array}{cccc}
a_\alpha & 0 & 0 & -x_\alpha\\ 
0 &  a_\alpha& x_\alpha & 0\\
0 & -y_\alpha & -a_\alpha & 0\\
y_\alpha & 0 & 0 & -a_\alpha
\end{array}%
\right):=\left( 
\begin{array}{cc}
\mathcal{A}_{\alpha} & \mathcal{X}_{\alpha} \\
\mathcal{Y}_{\alpha}  & -\mathcal{A}_{\alpha}
\end{array}%
\right)\qquad \textbf{noncomplex type},$$
with $a_\alpha,b_\alpha,c_\alpha,x_\alpha,y_\alpha\in\mathbb{R}$ such that $c_\alpha\neq 0$ and $a_\alpha^2=x_\alpha y_\alpha-1$.
\end{remark}

Consider the basis $(X_\alpha,X_\beta,X^\ast _\alpha,X_\beta ^\ast)$. For simplicity when performing the computations that we will do below, we forget up the root sub-index notation in the entries of the matrices introduced in Remark \ref{2DimensionalCases}. A straightforward computation allows us to get the following result:
\begin{lemma}
	Let $\mathbb{J}_{[\alpha]_M}$ be a generalized complex structure on $V_{[\alpha]_M}$ where $\dim V_{[\alpha]_M}=2$. If $\mathbb{J}_{[\alpha]_M}=\mathcal{J}^c_{[\alpha]_M}$ is of complex type, then its $+i$-eigenspace is given by
	\[
	L_c = \textrm{span} \{ (b+i)X_\alpha + cX_\beta, -cX_\alpha ^\ast +(b+i)X_\beta ^\ast \}.
	\]
	Otherwise, if $\mathbb{J}_{[\alpha]_M}=\mathcal{J}^{nc}_{[\alpha]_M}$ is of noncomplex type, then its $+i$-eigenspace is given by
	\[
	L_{nc} = \textrm{span} \{ xX_\alpha + (a-i)X_\beta ^\ast, -xX_\beta +(a-i)X_\alpha ^\ast \}.
	\]
\end{lemma}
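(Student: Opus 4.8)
The plan is to compute the $+i$-eigenspaces directly from the matrix representations of $\mathcal{J}^c_{[\alpha]_M}$ and $\mathcal{J}^{nc}_{[\alpha]_M}$ given in Remark \ref{2DimensionalCases}, working in the ordered basis $(X_\alpha, X_\beta, X^\ast_\alpha, X^\ast_\beta)$. Since each $\mathbb{J}_{[\alpha]_M}$ is a genuine complex structure (i.e. $\mathbb{J}^2 = -1$), its complexification diagonalizes with eigenvalues $\pm i$, each eigenspace being two-complex-dimensional; it therefore suffices to exhibit two linearly independent vectors $v$ with $\mathbb{J}_{[\alpha]_M} v = i v$ and check independence.

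First I would treat the complex case. Writing a candidate eigenvector as a column $(p,q,r,s)^T$, the block-diagonal structure of $\mathcal{J}^c_{[\alpha]_M}$ decouples the system into the tangent block $-J^c$ acting on $(p,q)$ and the cotangent block $(J^c)^\ast$ acting on $(r,s)$. Solving $-J^c(p,q)^T = i(p,q)^T$ with the explicit entries $b, c, -(1+b^2)/c$ yields the proportion $(p,q) \propto (b+i, c)$, giving the first generator $(b+i)X_\alpha + cX_\beta$. Solving the cotangent block $(J^c)^\ast(r,s)^T = i(r,s)^T$ analogously produces $(r,s) \propto (-c, b+i)$, giving $-cX^\ast_\alpha + (b+i)X^\ast_\beta$. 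These two vectors lie in complementary blocks, so they are manifestly independent, establishing the stated $L_c$.

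Next I would treat the noncomplex case, where the off-diagonal blocks $\mathcal{X}_\alpha, \mathcal{Y}_\alpha$ couple the tangent and cotangent directions, so the full $4\times 4$ system $\mathcal{J}^{nc}_{[\alpha]_M} v = i v$ must be solved together. Using the entries $a, -x, x, -y, y, -a$ and the defining relation $a^2 = xy - 1$, I would write out the four scalar equations and verify that $(x, 0, 0, a-i)^T$ and $(0, -x, a-i, 0)^T$ each satisfy them; the relation $a^2 + 1 = xy$ is exactly what makes the cross-terms cancel. This gives the generators $xX_\alpha + (a-i)X^\ast_\beta$ and $-xX_\beta + (a-i)X^\ast_\alpha$, and independence follows since their supports in the basis are disjoint (recalling $c \neq 0$, $x \neq 0$).

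The computation is entirely routine, so there is no real obstacle; the only point requiring care is bookkeeping the correspondence between the matrix's block/column conventions and the ordered basis $(X_\alpha, X_\beta, X^\ast_\alpha, X^\ast_\beta)$, together with the repeated use of $a^2 = xy - 1$ to confirm that the proposed noncomplex generators genuinely lie in the $+i$-eigenspace rather than merely being annihilated up to the constraint. I would close by remarking that in both cases one checks $L \cap \overline{L} = \{0\}$ automatically, since these are the $+i$-eigenspaces of an honest complex structure, consistent with part $\iota\iota.$ of Definition \ref{Def1}.
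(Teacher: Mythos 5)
Your proposal is correct and is essentially the paper's own argument: the paper states this lemma as following from "a straightforward computation," and your direct verification — diagonalizing the block-diagonal complex-type matrix blockwise, and checking that $(x,0,0,a-i)^T$ and $(0,-x,a-i,0)^T$ satisfy the coupled system using $a^2=xy-1$, then counting dimensions via $\mathbb{J}^2=-1$ — is exactly that computation carried out. The only marginal point worth noting is that $x\neq 0$ is not a separate hypothesis but follows from $xy=a^2+1>0$, which your argument implicitly uses for independence.
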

As consequence of Corollary \ref{usefulformula1} we get:
\\
\\
\noindent {\bf Case $B_2$.} In this case the $M$-equivalence classes of roots are given by
\[
\lbrace \lambda_2-\lambda_1,-\lambda_2-\lambda_1\rbrace\quad\textnormal{and}\quad \lbrace -\lambda_1,-\lambda_2\rbrace.
\]
If we fix the simple root system $\Sigma = \{ \alpha,\beta\}$ with $\alpha = \lambda_2 - \lambda_1$ and $\beta = -\lambda_2$, then we have that the $M$-equivalence classes are $\lbrace \alpha,\alpha + 2\beta \rbrace$ and  $\lbrace \alpha+\beta,\beta\rbrace$, respectively. Thus, if $\mathbb{J}= \mathbb{J}_{[\alpha]} \oplus \mathbb{J}_{[\alpha+\beta]}$ is an invariant generalized almost complex structure then we just need to look at the following four possibilities:
\\
\\
\noindent {\bf 1.} $\mathbb{J}_{[\alpha]}$ and $\mathbb{J}_{[\alpha+\beta]}$ of complex type. In this case we have that $L = \textrm{span} \{(b_1+i)X_\alpha + c_1X_{\alpha+2\beta}, -c_1 X_\alpha ^\ast +(b_1+i)X_{\alpha+2\beta} ^\ast, (b_2+i)X_{\alpha+\beta} + c_2X_\beta, -c_2X_{\alpha+\beta} ^\ast +(b_2+i)X_\beta ^\ast \}$. Then, we get that 
\[
\Nij ((b_1+i)X_\alpha + c_1X_{\alpha+2\beta}, (b_2+i)X_{\alpha+\beta} + c_2X_\beta, -c_2X_{\alpha+\beta} ^\ast +(b_2+i)X_\beta ^\ast) = -\frac{1}{2}(b_1+i)c_2 ^2 m_{\alpha, \beta}\neq 0.
\]
\\
{\bf 2.} $\mathbb{J}_{[\alpha]}$ of complex type and $\mathbb{J}_{[\alpha+\beta]}$ of noncomplex type. In this case $L = \textrm{span} \{ (b+i)X_\alpha + cX_{\alpha+2\beta}, -cX_\alpha ^\ast +(b+i)X_{\alpha+2\beta} ^\ast, xX_{\alpha+\beta} + (a-i)X_\beta ^\ast, -xX_\beta +(a-i)X_{\alpha+\beta} ^\ast \}$. Then, we have that
\[
\Nij (-cX_\alpha ^\ast +(b+i)X_{\alpha+2\beta} ^\ast, xX_{\alpha+\beta} + (a-i)X_\beta ^\ast, -xX_\beta +(a-i)X_{\alpha+\beta} ^\ast) = -\frac{1}{2}x^2(b+i)m_{\alpha+\beta,\beta}\neq 0.
\]
\\
{\bf 3.} $\mathbb{J}_{[\alpha]}$ of noncomplex type and $\mathbb{J}_{[\alpha+\beta]}$ of complex type. Here $L = \textrm{span} \{ xX_\alpha + (a-i)X_{\alpha+2\beta} ^\ast, -xX_{\alpha+2\beta} +(a-i)X_\alpha ^\ast, (b+i)X_{\alpha+\beta} + cX_\beta, -cX_{\alpha+\beta} ^\ast +(b+i)X_\beta ^\ast \}$. Then, we obtain that 
\[
\Nij (xX_\alpha + (a-i)X_{\alpha+2\beta} ^\ast, (b+i)X_{\alpha+\beta} + cX_\beta, -cX_{\alpha+\beta} ^\ast +(b+i)X_\beta ^\ast) = -\frac{1}{2}xc^2 m_{\alpha,\beta}\neq 0.
\]
\\
{\bf 4.} $\mathbb{J}_{[\alpha]}$ and $\mathbb{J}_{[\alpha+\beta]}$ of noncomplex type. Here $L= \textrm{span} \{x_1X_\alpha + (a_1-i)X_{\alpha+2\beta} ^\ast, -x_1X_{\alpha+2\beta} +(a_1-i)X_\alpha ^\ast, x_2X_{\alpha+\beta} + (a_2-i)X_\beta ^\ast, -x_2X_\beta +(a_2-i)X_{\alpha+\beta} ^\ast \}$. Then, we have that
\[
\Nij (x_1X_\alpha + (a_1-i)X_{\alpha+2\beta} ^\ast, x_2X_{\alpha+\beta} + (a_2-i)X_\beta ^\ast, -x_2X_\beta +(a_2-i)X_{\alpha+\beta} ^\ast) = -\frac{1}{2}x_2 ^2 (a_1 - i)m_{\alpha+\beta,\beta}\neq 0.
\]
%Thus providing that there are no integrable invariant generalized almost complex structure on $B_2$.\\
\begin{proposition}
	Let $\mathbb{F}$ be the maximal flag manifold of type $B_2$. Then, the invariant generalized almost complex structures on $\mathbb{F}$ are not integrable.
	%The invariant generalized almost complex structures on $B_2$ are not integrable.
\end{proposition}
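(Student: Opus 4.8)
The plan is to decide integrability via the Nijenhuis operator: by part $\iota\iota\iota.$ of Definition \ref{Def2}, an invariant generalized almost complex structure $\mathbb{J}$ is integrable if and only if $\Nij$ vanishes on its associated $+i$-eigenspace $L$. First I would invoke the block decomposition established in Section \ref{S:3}. Since the two $M$-equivalence classes for the $B_2$ maximal flag, namely $\lbrace\alpha,\alpha+2\beta\rbrace$ and $\lbrace\alpha+\beta,\beta\rbrace$, are each of dimension $2$, Remark \ref{2DimensionalCases} forces any invariant structure to split as $\mathbb{J}=\mathbb{J}_{[\alpha]}\oplus\mathbb{J}_{[\alpha+\beta]}$ with each summand of complex type $\mathcal{J}^c$ or of noncomplex type $\mathcal{J}^{nc}$. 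This reduces the problem to exactly the four cases exhibited above, and the $+i$-eigenspace $L$ is in each case the span of the explicit generators $L_c$ or $L_{nc}$ supplied by the preceding lemma.

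Next, for each of the four cases I would evaluate $\Nij$ on a single well-chosen triple of these generators. Because $\Nij$ is multilinear and, by Corollary \ref{usefulformula1}, its only nonzero value on Weyl-basis triples is $\Nij(X_\alpha,X_\beta,X^\ast_{\alpha+\beta})=\tfrac12 m_{\alpha,\beta}$ together with cyclic permutations, each evaluation collapses to a short monomial in the structure parameters times one structure constant. This is precisely the content of the four displayed computations: the values are proportional to $m_{\alpha,\beta}$ in Cases \textbf{1} and \textbf{3}, and to $m_{\alpha+\beta,\beta}$ in Cases \textbf{2} and \textbf{4}.

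The decisive step is to argue that none of these four values can vanish. The structure constants satisfy $m_{\alpha,\beta}\neq 0$ and $m_{\alpha+\beta,\beta}\neq 0$ because $\alpha+\beta=-\lambda_1$ and $\alpha+2\beta=-\lambda_1-\lambda_2$ are roots of $B_2$, so the brackets $[X_\alpha,X_\beta]$ and $[X_{\alpha+\beta},X_\beta]$ are nonzero. Meanwhile the admissibility conditions from Remark \ref{2DimensionalCases}, namely $c\neq 0$ in the complex case and $a^2=xy-1$ (hence $xy\geq 1$ and so $x\neq 0$) in the noncomplex case, guarantee that the parameter factors $c^2$ and $x^2$ appearing in the computed values do not degenerate. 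Therefore $\Nij|_L\neq 0$ in every case, and since these four cases exhaust all invariant generalized almost complex structures on $\mathbb{F}$, none of them is integrable.

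I expect the only genuine obstacle to be combinatorial rather than conceptual: in the two mixed cases one must choose the triple of generators so that their tangent and dual components assemble into an admissible root triple, either $(\alpha,\beta,\alpha+\beta)$ or $(\alpha+\beta,\beta,\alpha+2\beta)$, rather than into a configuration on which $\Nij$ is identically zero by Corollary \ref{usefulformula1}. Once the correct triple is selected in each case, the remaining verification is the routine bilinear bookkeeping already carried out above.
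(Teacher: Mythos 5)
Your proposal matches the paper's proof essentially step for step: the same reduction to four cases via the block decomposition and Remark \ref{2DimensionalCases}, the same strategy of exhibiting a single triple of generators of $L$ in each case on which $\Nij$ is nonzero via Corollary \ref{usefulformula1}, and the same nonvanishing argument ($c\neq 0$, $x\neq 0$ from $a^2=xy-1$, and $m_{\alpha,\beta},m_{\alpha+\beta,\beta}\neq 0$ because $\alpha+\beta$ and $\alpha+2\beta$ are roots of $B_2$). The values you predict, proportional to $m_{\alpha,\beta}$ in Cases 1 and 3 and to $m_{\alpha+\beta,\beta}$ in Cases 2 and 4, are exactly those computed in the paper.
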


%\subsection*{Case $A_3$}
\noindent {\bf Case $A_3$.} 
The $M$-equivalence classes of roots are given by 
\[
\lbrace \lambda_1 - \lambda_2, \lambda_3 - \lambda_4\rbrace, \quad \lbrace \lambda_1-\lambda_3,\lambda_2-\lambda_4\rbrace \quad \textrm{and} \quad \lbrace\lambda_1 - \lambda_4,\lambda_2-\lambda_3\rbrace.
\]
If we fix the simple root system $\Sigma = \lbrace \alpha_1 = \lambda_1 - \lambda_2, \alpha_2 = \lambda_2-\lambda_3, \alpha_3 = \lambda_3 - \lambda_4\rbrace$, then we have that the $M$-equivalence classes are respectively given by
\[
\lbrace \alpha_1, \alpha_3\rbrace, \quad \lbrace \alpha_1+\alpha_2, \alpha_2+\alpha_3\rbrace \quad \textrm{and} \quad \lbrace \alpha_1+\alpha_2+\alpha_3, \alpha_2\rbrace.
\]
If $\mathbb{J}$ is an invariant generalized almost complex structure on $\mathbb{F}$, then we have one of the possibilities presented in Table \ref{table2}.
\begin{table}[htb!]
	\begin{tabular}{|r|c|c|c|}
		\hline & $\mathbb{J}_{[\alpha_1]}$ & $\mathbb{J}_{[\alpha_1+\alpha_2]}$ & $\mathbb{J}_{[\alpha_1+\alpha_2+\alpha_3]}$ \\ \hline
		{\bf 1.} & complex & complex & complex\\
		{\bf 2.} & complex & complex & noncomplex\\
		{\bf 3.} & complex & noncomplex & complex\\
		{\bf 4.} & noncomplex & complex & complex\\
		{\bf 5.} & complex & noncomplex & noncomplex\\
		{\bf 6.} & noncomplex & complex & noncomplex\\
		{\bf 7.} & complex & noncomplex & noncomplex\\
		{\bf 8.} & noncomplex & noncomplex & noncomplex \\ \hline
	\end{tabular}
	\caption{Possible combinations for $\mathbb{J}$ on $A_3$}\label{table2}
\end{table}

We must carefully analyze the integrability of all possibilities presented in Table \ref{table2}. 
\\
\\
\noindent {\bf 1.} This case comes from the invariant almost complex structures found in \cite{FBS}. This is not integrable.
\\
\\
\noindent {\bf 2.} Here the $+i$-eigenspace is given by $L = \textrm{span}\lbrace (b_1+i)X_{\alpha_1} +c_1 X_{\alpha_3}, -c_1X_{\alpha_1} ^\ast +(b_1 + i)X_{\alpha_3} ^\ast, (b_2+i)X_{\alpha_1+\alpha_2} +c_2 X_{\alpha_2+\alpha_3}, -c_2X_{\alpha_1+\alpha_2} ^\ast +(b_2 + i)X_{\alpha_2+\alpha_3} ^\ast, xX_{\alpha_1+\alpha_2+\alpha_3}+(a-i)X^\ast _{\alpha_2}, -xX_{\alpha_2} +(a-i)X_{\alpha_1+\alpha_2+\alpha_3} ^\ast \rbrace$. On the one hand, we have that
\begin{eqnarray*}
	\Nij ((b_1+i)X_{\alpha_1} +c_1 X_{\alpha_3},(b_2+i)X_{\alpha_1+\alpha_2} +c_2 X_{\alpha_2+\alpha_3},-xX_{\alpha_2} +(a-i)X_{\alpha_1+\alpha_2+\alpha_3} ^\ast) \\ = \frac{1}{2}(a-i)(c_2(b_1+i)m_{\alpha_1,\alpha_2+\alpha_3}+c_1(b_2+i)m_{\alpha_3,\alpha_1+\alpha_2})
\end{eqnarray*}
and
\begin{eqnarray*}
	\Nij ((b_1+i)X_{\alpha_1} +c_1 X_{\alpha_3},-xX_{\alpha_2} +(a-i)X_{\alpha_1+\alpha_2+\alpha_3} ^\ast, -c_2X_{\alpha_1+\alpha_2} ^\ast +(b_2 + i)X_{\alpha_2+\alpha_3} ^\ast) \\ = \dfrac{1}{2}x( (b_1+i)c_2 m_{\alpha_1,\alpha_2}- c_1 (b_2+i) m_{\alpha_3,\alpha_2}).
\end{eqnarray*}
On the other hand, suppose that $\mathbb{J}$ is integrable. So, the condition $\Nij |_L = 0$ implies that 
\[
\left\lbrace \begin{array}{l}
c_2(b_1+i)m_{\alpha_1,\alpha_2+\alpha_3}+c_1(b_2+i)m_{\alpha_3,\alpha_1+\alpha_2} = 0\\
(b_1+i)c_2 m_{\alpha_1,\alpha_2}- c_1 (b_2+i) m_{\alpha_3,\alpha_2} = 0.
\end{array}\right.
\]
In particular, it follows that
\begin{equation}\label{eq1}
\left\lbrace \begin{array}{l}
c_2 m_{\alpha_1,\alpha_2+\alpha_3}+c_1 m_{\alpha_3,\alpha_1+\alpha_2} = 0\\
c_2 m_{\alpha_1,\alpha_2}- c_1 m_{\alpha_3,\alpha_2} = 0
\end{array}\right. \quad\textnormal{implies}\quad \frac{m_{\alpha_1,\alpha_2}}{m_{\alpha_3,\alpha_2}} = \frac{c_1}{c_2} = -\frac{m_{\alpha_1,\alpha_2+\alpha_3}}{m_{\alpha_3,\alpha_1+\alpha_2}}.
\end{equation}
However, observe that by the Jacobi identity we have
\begin{eqnarray*}
	0 & = & [X_{\alpha_3},[X_{\alpha_1},X_{\alpha_2}]]-[[X_{\alpha_3},X_{\alpha_1}],X_{\alpha_2}]-[X_{\alpha_1},[X_{\alpha_3},X_{\alpha_2}]] \\
	& = & m_{\alpha_1,\alpha_2}[X_{\alpha_3},X_{\alpha_1+\alpha_2}]-m_{\alpha_3,\alpha_2}[X_{\alpha_1},X_{\alpha_2+\alpha_3}]\\
	& = & (m_{\alpha_1,\alpha_2}m_{\alpha_3,\alpha_1+\alpha_2}-m_{\alpha_3,\alpha_2}m_{\alpha_1,\alpha_2+\alpha_3})X_{\alpha_1+\alpha_2+\alpha_3}.
\end{eqnarray*}
Thus
\begin{equation}\label{eq2}
m_{\alpha_1,\alpha_2}m_{\alpha_3,\alpha_1+\alpha_2}-m_{\alpha_3,\alpha_2}m_{\alpha_1,\alpha_2+\alpha_3} = 0 \quad\textnormal{implies}\quad \frac{m_{\alpha_1,\alpha_2}}{m_{\alpha_3,\alpha_2}} = \frac{m_{\alpha_1,\alpha_2+\alpha_3}}{m_{\alpha_3,\alpha_1+\alpha_2}}.
\end{equation}
Note that Equation \eqref{eq2} contradicts Equation \eqref{eq1}. Therefore, $\mathbb{J}$ can not be integrable.
\\
\\
{\bf 3.} In this case the $+i$-eigenspace is $L = \textrm{span}\lbrace (b_1+i)X_{\alpha_1} +c_1 X_{\alpha_3}, -c_1X_{\alpha_1} ^\ast +(b_1 + i)X_{\alpha_3} ^\ast, xX_{\alpha_1+\alpha_2} +(a-i) X^\ast _{\alpha_2+\alpha_3}, -xX_{\alpha_2+\alpha_3} +(a- i)X_{\alpha_1+\alpha_2} ^\ast, (b_2+i)X_{\alpha_1+\alpha_2+\alpha_3}+c_2X_{\alpha_2},  -c_2 X_{\alpha_2} ^\ast +(b_2+i)X_{\alpha_1+\alpha_2+\alpha_3} ^\ast \rbrace$. Then, we have
\begin{eqnarray*}
	\Nij ((b_1+i)X_{\alpha_1} +c_1 X_{\alpha_3},(b_2+i)X_{\alpha_1+\alpha_2+\alpha_3}+c_2X_{\alpha_2},xX_{\alpha_1+\alpha_2} +(a-i) X^\ast _{\alpha_2+\alpha_3}) \\ = \frac{1}{2}c_1 c_2 (a-i)m_{\alpha_3,\alpha_2} \not= 0,
\end{eqnarray*}
which immediately implies that $\mathbb{J}$ is not integrable.
\\
\\
{\bf 4.} Here we obtain that $L = \textrm{span}\lbrace xX_{\alpha_1} +(a-i) X^\ast _{\alpha_3}, -xX_{\alpha_3} +(a-i)X_{\alpha_1} ^\ast, (b_1+i)X_{\alpha_1+\alpha_2} +c_1 X_{\alpha_2+\alpha_3}, -c_1X_{\alpha_1+\alpha_2} ^\ast +(b_1 + i)X_{\alpha_2+\alpha_3} ^\ast, (b_2+i)X_{\alpha_1+\alpha_2+\alpha_3}+c_2X_{\alpha_2}, -c_2 X_{\alpha_1+\alpha_2+\alpha_3} ^\ast +(b_2+i)X_{\alpha_2} ^\ast  \rbrace$. Thus
\begin{eqnarray*}
	\Nij ( xX_{\alpha_1} +(a-i) X^\ast _{\alpha_3},(b_1+i)X_{\alpha_1+\alpha_2} +c_1 X_{\alpha_2+\alpha_3},-c_2 X_{\alpha_1+\alpha_2+\alpha_3} ^\ast +(b_2+i)X_{\alpha_2} ^\ast) \\ = -\frac{1}{2}xc_1c_2m_{\alpha_1,\alpha_2+\alpha_3}\not= 0,
\end{eqnarray*}
and hence $\mathbb{J}$ is not integrable.
\\
\\
{\bf 5.} In this case $L = \textrm{span}\lbrace (b+i)X_{\alpha_1} +c X_{\alpha_3}, -cX_{\alpha_1} ^\ast +(b+i)X_{\alpha_3} ^\ast, x_1 X_{\alpha_1+\alpha_2} +(a_1 - i) X^\ast _{\alpha_2+\alpha_3}, -x_1X_{\alpha_2+\alpha_3} +(a_1 - i)X_{\alpha_1+\alpha_2} ^\ast, x_2 X_{\alpha_1+\alpha_2+\alpha_3}+(a_2-i)X^\ast _{\alpha_2}, -x_2 X_{\alpha_2} +(a_2-i)X_{\alpha_1+\alpha_2+\alpha_3} ^\ast  \rbrace$. Then, we have that
\begin{eqnarray*}
	\Nij ((b+i)X_{\alpha_1} +c X^\ast _{\alpha_3},x_1 X_{\alpha_1+\alpha_2} +(a_1 - i) X^\ast _{\alpha_2+\alpha_3},-x_2 X_{\alpha_2} +(a_2-i)X_{\alpha_1+\alpha_2+\alpha_3} ^\ast) \\ = \frac{1}{2}c(x_1(a_2-i)m_{\alpha_3,\alpha_1+\alpha_2}+x_2(a_1-i)m_{\alpha_3,\alpha_2}),
\end{eqnarray*}
and
\begin{eqnarray*}
	\Nij ((b+i)X_{\alpha_1} +c X^\ast _{\alpha_3},-x_1X_{\alpha_2+\alpha_3} +(a_1 - i)X_{\alpha_1+\alpha_2} ^\ast,-x_2 X_{\alpha_2} +(a_2-i)X_{\alpha_1+\alpha_2+\alpha_3} ^\ast) \\ = -\frac{1}{2}(b+i)(x_1(a_2-i)m_{\alpha_1,\alpha_2+\alpha_3}+x_2(a_1-i)m_{\alpha_1,\alpha_2}).
\end{eqnarray*}
Therefore, if $\Nij |_L = 0$ then we must obtain 
\[
\left\lbrace \begin{array}{l}
x_1(a_2-i)m_{\alpha_3,\alpha_1+\alpha_2}+x_2(a_1-i)m_{\alpha_3,\alpha_2} = 0\\
x_1(a_2-i)m_{\alpha_1,\alpha_2+\alpha_3}+x_2(a_1-i)m_{\alpha_1,\alpha_2}=0.
\end{array}\right.
\]
In particular, from the imaginary part we have
\begin{equation}\label{eq3}
\frac{m_{\alpha_1,\alpha_2}}{m_{\alpha_1,\alpha_2+\alpha_3}} = \frac{x_1}{x_2} = -\frac{m_{\alpha_3,\alpha_2}}{m_{\alpha_3,\alpha_1+\alpha_2}}.
\end{equation}
But observe that Equation \eqref{eq3} contradicts Equation \eqref{eq2} which we got by using Jacobi identity. Therefore, $\mathbb{J}$ can not be integrable. 
\\
\\
{\bf 6.} Here we have that $L = \textrm{span}\lbrace x_1 X_{\alpha_1} +(a_1-i) X^\ast _{\alpha_3}, -x_1X_{\alpha_3} +(a_1-i)X_{\alpha_1} ^\ast, (b+i) X_{\alpha_1+\alpha_2} +c X_{\alpha_2+\alpha_3}, -cX_{\alpha_1+\alpha_2} ^\ast +(b+i)X_{\alpha_2+\alpha_3} ^\ast, x_2 X_{\alpha_1+\alpha_2+\alpha_3}+(a_2-i)X^\ast _{\alpha_2}, -x_2 X_{\alpha_2} +(a_2-i)X_{\alpha_1+\alpha_2+\alpha_3} ^\ast  \rbrace$. Thus
\begin{eqnarray*}
	\Nij (x_1 X_{\alpha_1} +(a_1-i) X^\ast _{\alpha_3},(b+i) X_{\alpha_1+\alpha_2} +c X_{\alpha_2+\alpha_3},-x_2 X_{\alpha_2} +(a_2-i)X_{\alpha_1+\alpha_2+\alpha_3} ^\ast) \\ = \frac{1}{2}x_1 c (a_2-i)m_{\alpha_1,\alpha_2+\alpha_3} \not= 0, 
\end{eqnarray*}
which means that $\mathbb{J}$ is not integrable.
\\
\\
{\bf 7.} This case is also not integrable because the $+i$-eigenpace is $L = \textrm{span}\lbrace x_1 X_{\alpha_1} +(a_1-i) X^\ast _{\alpha_3}, -x_1X_{\alpha_3} +(a_1-i)X_{\alpha_1} ^\ast, x_2 X_{\alpha_1+\alpha_2} +(a_2-i) X^\ast _{\alpha_2+\alpha_3}, -x_2X_{\alpha_2+\alpha_3} +(a_2-i) X_{\alpha_1+\alpha_2} ^\ast, (b+i) X_{\alpha_1+\alpha_2+\alpha_3}+cX _{\alpha_2}, -c X_{\alpha_1+\alpha_2+\alpha_3} ^\ast +(b+i)X_{\alpha_2} ^\ast  \rbrace$ and
\begin{eqnarray*}
	\Nij (x_1 X_{\alpha_1} +(a_1-i) X^\ast _{\alpha_3},-x_2X_{\alpha_2+\alpha_3} +(a_2-i) X_{\alpha_1+\alpha_2} ^\ast,-c X_{\alpha_1+\alpha_2+\alpha_3} ^\ast +(b+i)X_{\alpha_2} ^\ast) \\ =-\frac{1}{2}x_1x_2(b+i)m_{\alpha_1,\alpha_2+\alpha_3} \not= 0.
\end{eqnarray*}
\\
\\
{\bf 8.} Finally, for this case we have $L = \textrm{span}\lbrace x_1 X_{\alpha_1} +(a_1-i) X^\ast _{\alpha_3}, -x_1X_{\alpha_3} +(a_1-i)X_{\alpha_1} ^\ast, x_2 X_{\alpha_1+\alpha_2} +(a_2-i) X^\ast _{\alpha_2+\alpha_3}, -x_2X_{\alpha_2+\alpha_3} +(a_2-i) X_{\alpha_1+\alpha_2} ^\ast, x_3 X_{\alpha_1+\alpha_2+\alpha_3}+(a_3-i)X _{\alpha_2} ^\ast, -x_3 X_{\alpha_2} +(a_3-i)X_{\alpha_1+\alpha_2+\alpha_3} ^\ast  \rbrace$. Note that
\begin{eqnarray*}
	\Nij (x_1 X_{\alpha_1} +(a_1-i) X^\ast _{\alpha_3},-x_2X_{\alpha_2+\alpha_3} +(a_2-i) X_{\alpha_1+\alpha_2} ^\ast,-x_3 X_{\alpha_2} +(a_3-i)X_{\alpha_1+\alpha_2+\alpha_3} ^\ast) \\ = -\frac{1}{2}x_1(x_2(a_3 - i)m_{\alpha_1,\alpha_2+\alpha_3}-x_3(a_2-i)m_{\alpha_1,\alpha_2}),
\end{eqnarray*}
and
\begin{eqnarray*}
	\Nij (-x_1X_{\alpha_3} +(a_1-i)X_{\alpha_1} ^\ast,x_2 X_{\alpha_1+\alpha_2} +(a_2-i) X^\ast _{\alpha_2+\alpha_3},-x_3 X_{\alpha_2} +(a_3-i)X_{\alpha_1+\alpha_2+\alpha_3} ^\ast )\\ = -\frac{1}{2}x_1 (x_2(a_3 - i)m_{\alpha_3,\alpha_1+\alpha_2}+x_3(a_2-i)m_{\alpha_3,\alpha_2}).
\end{eqnarray*}
So, if $\Nij|_L = 0$ then we must have
\[
\left\lbrace \begin{array}{l}
x_2(a_3 - i)m_{\alpha_1,\alpha_2+\alpha_3}-x_3(a_2-i)m_{\alpha_1,\alpha_2} = 0\\
x_2(a_3 - i)m_{\alpha_3,\alpha_1+\alpha_2}+x_3(a_2-i)m_{\alpha_3,\alpha_2} = 0.
\end{array}\right.
\]
In particular, the imaginary part vanishes which gives us
\begin{equation}\label{eq4}
\frac{m_{\alpha_1,\alpha_2}}{m_{\alpha_1,\alpha_2+\alpha_3}} = \frac{x_2}{x_3} = -\frac{m_{\alpha_3,\alpha_2}}{m_{\alpha_3,\alpha_1+\alpha_2}}.
\end{equation}
Again, we have that Equation \eqref{eq4} contradicts Equation \eqref{eq2} and, therefore, $\mathbb{J}$ can not be integrable.

%In conclusion, there are no integrable invariant generalized almost complex structures on $A_3$.
\begin{proposition}
	Let $\mathbb{F}$ be the maximal flag manifold of type $A_3$. Then, the invariant generalized almost complex structures on $\mathbb{F}$ are not integrable.
	%The invariant generalized almost complex structures on $A_3$ are not integrable.
\end{proposition}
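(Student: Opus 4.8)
The plan is to exploit the direct-sum decomposition $\mathbb{J} = \mathbb{J}_{[\alpha_1]} \oplus \mathbb{J}_{[\alpha_1+\alpha_2]} \oplus \mathbb{J}_{[\alpha_1+\alpha_2+\alpha_3]}$ coming from the three two-dimensional $M$-equivalence classes, combined with the integrability criterion of Definition \ref{Def2}: $\mathbb{J}$ is integrable if and only if $\Nij|_L = 0$, where $L$ is the associated $+i$-eigenspace. By the preceding Lemma each factor is either of complex type (contributing a pair of $L_c$-generators) or of noncomplex type (contributing a pair of $L_{nc}$-generators), so there are exactly $2^3 = 8$ combinations to examine, as listed in Table \ref{table2}. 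Since by Corollary \ref{usefulformula1} the Nijenhuis operator at $b_\Theta$ is multilinear and supported only on triples of roots of the form $(\alpha,\beta,\alpha+\beta)$, the whole problem reduces to evaluating $\Nij$ on suitably chosen triples among the six spanning vectors of $L$ in each case.

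First I would dispose of the combinations in which a single Nijenhuis evaluation already fails to vanish. For cases 1, 3, 4, 6, and 7 one exhibits a triple of generators whose Nijenhuis value is a nonzero multiple of a structure constant $m_{\cdot,\cdot}$ times a factor such as $c_i$, $x_i$, $(b+i)$, or $(a-i)$; each such factor is nonzero by the constraints $c_\alpha \neq 0$ and $a_\alpha^2 = x_\alpha y_\alpha - 1$ recorded in Remark \ref{2DimensionalCases}. Hence $\Nij|_L \neq 0$ and integrability is ruled out immediately in these five cases.

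The delicate part is the remaining combinations 2, 5, and 8, where no single triple suffices because the individual Nijenhuis values are not forced to be nonzero on their own. Here the strategy is to assume integrability and extract from $\Nij|_L = 0$ a \emph{pair} of linear relations among the structure constants $m_{\alpha_1,\alpha_2}$, $m_{\alpha_3,\alpha_2}$, $m_{\alpha_1,\alpha_2+\alpha_3}$, and $m_{\alpha_3,\alpha_1+\alpha_2}$. Reading off the imaginary parts then yields a common ratio of the coefficients $c_i$ or $x_i$ that must simultaneously equal $m_{\alpha_1,\alpha_2}/m_{\alpha_3,\alpha_2}$ and $-\,m_{\alpha_1,\alpha_2+\alpha_3}/m_{\alpha_3,\alpha_1+\alpha_2}$, precisely as in \eqref{eq1}, \eqref{eq3}, and \eqref{eq4}.

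The main obstacle, and the crux of the argument, is that this integrability-forced ratio carries the \emph{opposite} sign to the one dictated by the root combinatorics of $A_3$. Applying the Jacobi identity to $[X_{\alpha_3},[X_{\alpha_1},X_{\alpha_2}]]$ gives \eqref{eq2}, namely $m_{\alpha_1,\alpha_2}\,m_{\alpha_3,\alpha_1+\alpha_2} = m_{\alpha_3,\alpha_2}\,m_{\alpha_1,\alpha_2+\alpha_3}$, so that $m_{\alpha_1,\alpha_2}/m_{\alpha_3,\alpha_2} = +\,m_{\alpha_1,\alpha_2+\alpha_3}/m_{\alpha_3,\alpha_1+\alpha_2}$. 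Comparing this positive ratio with the negative one forced by integrability produces a contradiction in each of the cases 2, 5, and 8, all the relevant structure constants being nonzero since the corresponding sums are genuine roots of $A_3$. This sign clash is what uniformly obstructs integrability, and collecting the eight cases completes the proof.
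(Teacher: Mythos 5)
Your strategy coincides with the paper's: decompose $\mathbb{J}$ over the three two-dimensional $M$-classes, run through the eight combinations of Table \ref{table2}, kill the mixed cases by exhibiting a single nonvanishing Nijenhuis evaluation, and treat the remaining cases by extracting from $\Nij|_L=0$ a pair of relations whose induced ratio contradicts the Jacobi identity \eqref{eq2}. For cases 2 through 8 your outline is exactly what the paper does, and it is correct. However, your treatment of case 1 (all three blocks of complex type) has a genuine gap: you group it with cases 3, 4, 6, 7 and claim that some single triple of generators has Nijenhuis value equal to a nonzero structure constant times nonzero parameter factors. This is false in case 1. There every generator of $L$ is either purely a vector, namely $V_1=(b_1+i)X_{\alpha_1}+c_1X_{\alpha_3}$, $V_2=(b_2+i)X_{\alpha_1+\alpha_2}+c_2X_{\alpha_2+\alpha_3}$, $V_3=(b_3+i)X_{\alpha_1+\alpha_2+\alpha_3}+c_3X_{\alpha_2}$, or purely a covector $W_1,W_2,W_3$; by \eqref{NijenhuisTensor} only triples consisting of two $V$'s and one $W$ can be nonzero, and since $[V_2,V_3]=0$ the only evaluations that are not identically zero are
\begin{align*}
\Nij(V_1,V_2,W_3)&=-\tfrac{c_3}{2}\bigl((b_1+i)c_2\,m_{\alpha_1,\alpha_2+\alpha_3}+c_1(b_2+i)\,m_{\alpha_3,\alpha_1+\alpha_2}\bigr),\\
\Nij(V_1,V_3,W_2)&=\tfrac{c_3}{2}\bigl(c_1(b_2+i)\,m_{\alpha_3,\alpha_2}-(b_1+i)c_2\,m_{\alpha_1,\alpha_2}\bigr).
\end{align*}
Each of these is a \emph{two-term} sum, not a monomial, and each one separately can be made to vanish: for instance the first vanishes whenever $b_1=b_2$ and $c_2\,m_{\alpha_1,\alpha_2+\alpha_3}=-c_1\,m_{\alpha_3,\alpha_1+\alpha_2}$. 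So no single evaluation chosen in advance rules out integrability in this case, contrary to your claim.

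The gap is easily repaired, in either of two ways. You can run your own cases-2/5/8 argument here as well: requiring both displayed quantities to vanish forces $m_{\alpha_1,\alpha_2+\alpha_3}\,m_{\alpha_3,\alpha_2}=-m_{\alpha_1,\alpha_2}\,m_{\alpha_3,\alpha_1+\alpha_2}$, which combined with the Jacobi identity \eqref{eq2} gives $m_{\alpha_1,\alpha_2}\,m_{\alpha_3,\alpha_1+\alpha_2}=0$, a contradiction. Alternatively, do what the paper does: when every block is of complex type, $\mathbb{J}$ is the generalized structure $\mathbb{J}_c$ of Example \ref{ExampleC} attached to an invariant almost complex structure $J$ on $\mathbb{F}$, whose integrability is equivalent to the classical integrability of $J$, and by \cite{FBS} the maximal real flag of type $A_3$ admits no integrable invariant almost complex structure.
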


%\subsection*{Case $G_2$}
\noindent {\bf Case $G_2$.} As we did in the previous cases, here it is simple to check that the $M$-equivalence classes of roots are given by 
\[
\lbrace \alpha, \alpha+2\beta \rbrace, \quad \lbrace \alpha+\beta,\alpha+3\beta \rbrace \quad \textrm{and} \quad \lbrace \beta,2\alpha+3\beta\rbrace.
\]

Thus, as in the case of $A_3$, we have that an invariant generalized almost complex structure $\mathbb{J}$ on $\mathbb{F}$ takes one of the possibilities presented in Table \ref{table3}.
\begin{table}[htb!]
	\begin{tabular}{|r|c|c|c|}
		\hline & $\mathbb{J}_{[\alpha]}$ & $\mathbb{J}_{[\alpha+\beta]}$ & $\mathbb{J}_{[\beta]}$ \\ \hline
		{\bf 1.} & complex & complex & complex\\
		{\bf 2.} & complex & complex & noncomplex\\
		{\bf 3.} & complex & noncomplex & complex\\
		{\bf 4.} & noncomplex & complex & complex\\
		{\bf 5.} & complex & noncomplex & noncomplex\\
		{\bf 6.} & noncomplex & complex & noncomplex\\
		{\bf 7.} & complex & noncomplex & noncomplex\\
		{\bf 8.} & noncomplex & noncomplex & noncomplex \\ \hline
	\end{tabular}
	\caption{Possible combinations for $\mathbb{J}$ on $G_2$}\label{table3}
\end{table}
\\
\\
\noindent {\bf 1.} This case comes from the invariant almost complex structures found in \cite{FBS}. This is not integrable. 
\\
\\
\noindent {\bf 2.} Here the $+i$-eigenspace is given by $L = \textrm{span}\lbrace (b_1+i)X_{\alpha} +c_1 X_{\alpha+2\beta}, -c_1X_{\alpha} ^\ast +(b_1 + i)X_{\alpha+2\beta} ^\ast, (b_2+i)X_{\alpha+\beta} +c_2 X_{\alpha+3\beta}, -c_2X_{\alpha+\beta} ^\ast +(b_2 + i)X_{\alpha+3\beta} ^\ast, xX_{\beta}+(a-i)X^\ast _{2\alpha+3\beta}, -xX_{2\alpha+3\beta} +(a-i)X_{\beta} ^\ast \rbrace$. Observe that
\begin{eqnarray*}
	\Nij ((b_1+i)X_{\alpha} +c_1 X_{\alpha+2\beta}, (b_2+i)X_{\alpha+\beta} +c_2 X_{\alpha+3\beta}, xX_{\beta}+(a-i)X^\ast _{2\alpha+3\beta}) \\ = \frac{1}{2}(a-i)((b_1+i)c_2m_{\alpha,\alpha+3\beta}+(b_2+i)c_1m_{\alpha+2\beta,\alpha+\beta}),
\end{eqnarray*}
and
\begin{eqnarray*}
	\Nij ((b_1+i)X_{\alpha} +c_1 X_{\alpha+2\beta}, xX_{\beta}+(a-i)X^\ast _{2\alpha+3\beta}, -c_2X_{\alpha+\beta} ^\ast +(b_2 + i)X_{\alpha+3\beta} ^\ast) \\ = - \frac{1}{2}x((b_1+i)c_2m_{\alpha,\beta}-(b_2+i)c_1m_{\alpha+2\beta,\beta}).
\end{eqnarray*}
Suppose that $\mathbb{J}$ is integrable, then the condition $\Nij|_L = 0$ implies that
\[
\left\lbrace \begin{array}{l}
(b_1+i)c_2m_{\alpha,\alpha+3\beta}+(b_2+i)c_1m_{\alpha+2\beta,\alpha+\beta}=0\\
(b_1+i)c_2m_{\alpha,\beta}-(b_2+i)c_1m_{\alpha+2\beta,\beta} = 0.
\end{array}\right.
\]
In particular, we have
\begin{equation}\label{g2-1}
\left\lbrace \begin{array}{l}
c_2m_{\alpha,\alpha+3\beta}+c_1m_{\alpha+2\beta,\alpha+\beta}=0\\
c_2m_{\alpha,\beta}-c_1m_{\alpha+2\beta,\beta} = 0
\end{array}\right. \quad \textnormal{implies} \quad \frac{m_{\alpha,\beta}}{m_{\alpha+2\beta}} = \frac{c_1}{c_2} = -\frac{m_{\alpha,\alpha+3\beta}}{m_{\alpha+2\beta,\alpha+\beta}}.
\end{equation}
However, as consequence of Jacobi identity we get
\begin{eqnarray*}
	0 & = & [X_{\alpha+2\beta},[X_{\alpha},X_\beta]]-[[X_{\alpha+2\beta},X_\alpha],X_\beta]-[X_\alpha,[X_{\alpha+2\beta},X_\beta]]\\
	& = & m_{\alpha,\beta}[X_{\alpha+2\beta},X_{\alpha+\beta}]- m_{\alpha+2\beta,\beta}[X_\alpha,X_{\alpha+3\beta}]\\
	& = & (m_{\alpha,\beta}m_{{\alpha+2\beta},{\alpha+\beta}}-m_{\alpha+2\beta,\beta}m_{\alpha,\alpha+3\beta})X_{2\alpha+3\beta},
\end{eqnarray*}
which tells us that
\begin{equation}\label{g2-2}
m_{\alpha,\beta}m_{{\alpha+2\beta},{\alpha+\beta}}-m_{\alpha+2\beta,\beta}m_{\alpha,\alpha+3\beta} = 0 \quad \textnormal{implies} \quad \frac{m_{\alpha,\beta}}{m_{\alpha+2\beta}} = \frac{m_{\alpha,\alpha+3\beta}}{m_{\alpha+2\beta,\alpha+\beta}}.
\end{equation}
Note that Equation \eqref{g2-2} contradicts Equation \eqref{g2-1}, thus $\mathbb{J}$ can not be integrable.
\\
\\
{\bf 3.} In this case the $+i$-eigenspace is $L = \textrm{span}\lbrace (b_1+i)X_{\alpha} +c_1 X_{\alpha+2\beta}, -c_1X_{\alpha} ^\ast +(b_1 + i)X_{\alpha+2\beta} ^\ast, xX_{\alpha+\beta} +(a-i) X^\ast _{\alpha+3\beta}, -xX_{\alpha+3\beta} +(a- i)X_{\alpha+\beta} ^\ast, (b_2+i)X_{\beta}+c_2X_{2\alpha+3\beta},  -c_2 X_{2\alpha+3\beta} ^\ast +(b_2+i)X_{\beta} ^\ast \rbrace$. Then
\begin{eqnarray*}
	\Nij ((b_1+i)X_{\alpha} +c_1 X_{\alpha+2\beta}, xX_{\alpha+\beta} +(a-i) X^\ast _{\alpha+3\beta}, -c_2 X_{2\alpha+3\beta} ^\ast +(b_2+i)X_{\beta} ^\ast \rbrace) \\ = \frac{1}{2}c_1x(b_2+i)m_{\alpha+2\beta,\alpha+\beta}\not= 0,
\end{eqnarray*}
for which we immediately get that $\mathbb{J}$ is not integrable.
\\
\\
{\bf 4.} Here we have $L = \textrm{span}\lbrace xX_{\alpha} +(a-i) X^\ast _{\alpha+2\beta}, -xX_{\alpha+2\beta} +(a-i)X_{\alpha} ^\ast, (b_1+i)X_{\alpha+\beta} +c_1 X_{\alpha+3\beta}, -c_1X_{\alpha+\beta} ^\ast +(b_1 + i)X_{\alpha+3\beta} ^\ast, (b_2+i)X_{\beta}+c_2X_{2\alpha+3\beta}, -c_2 X_{\beta} ^\ast +(b_2+i)X_{2\alpha+3\beta} ^\ast  \rbrace$. Then
\begin{eqnarray*}
	\Nij (-xX_{\alpha+2\beta} +(a-i)X_{\alpha} ^\ast, (b_2+i)X_{\beta}+c_2X_{2\alpha+3\beta}, -c_1X_{\alpha+\beta} ^\ast +(b_1 + i)X_{\alpha+3\beta} ^\ast) \\ = -\frac{1}{2}x(b_1+i)(b_2+i)m_{\alpha+2\beta,\beta}\not=0,
\end{eqnarray*}
which means that $\mathbb{J}$ is not integrable.
\\
\\
{\bf 5.} This case is also not integrable because $L = \textrm{span}\lbrace (b+i)X_{\alpha} +c X_{\alpha+2\beta}, -cX_{\alpha} ^\ast +(b+i)X_{\alpha+2\beta} ^\ast, x_1 X_{\alpha+\beta} +(a_1 - i) X^\ast _{\alpha+3\beta}, -x_1X_{\alpha+3\beta} +(a_1 - i)X_{\alpha+\beta} ^\ast, x_2 X_{\beta}+(a_2-i)X^\ast _{2\alpha+3\beta}, -x_2 X_{2\alpha+3\beta} +(a_2-i)X_{\beta} ^\ast  \rbrace$ and
\begin{eqnarray*}
	\Nij (x_2 X_{\beta}+(a_2-i)X^\ast _{2\alpha+3\beta}, x_1 X_{\alpha+\beta} +(a_1 - i) X^\ast _{\alpha+3\beta},-cX_{\alpha} ^\ast +(b+i)X_{\alpha+2\beta} ^\ast) \\ = \frac{1}{2}x_1x_2(b_i)m_{\beta,\alpha+\beta} \not= 0.
\end{eqnarray*}
\\
\\
{\bf 6.} Now we have $L = \textrm{span}\lbrace x_1 X_{\alpha} +(a_1-i) X^\ast _{\alpha+2\beta}, -x_1X_{\alpha+2\beta} +(a_1-i)X_{\alpha} ^\ast, (b+i) X_{\alpha+\beta} +c X_{\alpha+3\beta}, -cX_{\alpha+\beta} ^\ast +(b+i)X_{\alpha+3\beta} ^\ast, x_2 X_{\beta}+(a_2-i)X^\ast _{2\alpha+3\beta}, -x_2 X_{2\alpha+3\beta} +(a_2-i)X_{\beta} ^\ast  \rbrace$, then observe that
\begin{eqnarray*}
	\Nij (-x_1X_{\alpha+2\beta} +(a_1-i)X_{\alpha} ^\ast, x_2 X_{\beta}+(a_2-i)X^\ast _{2\alpha+3\beta},-cX_{\alpha+\beta} ^\ast +(b+i)X_{\alpha+3\beta} ^\ast) \\ = -\frac{1}{2}x_1x_2(b+i)m_{\alpha+2\beta,\beta}\not= 0.
\end{eqnarray*}
Therefore $\mathbb{J}$ is not integrable.
\\
\\
{\bf 7.} Here the $+i$-eigenspace is given by $L = \textrm{span}\lbrace x_1 X_{\alpha} +(a_1-i) X^\ast _{\alpha+2\beta}, -x_1X_{\alpha+2\beta} +(a_1-i)X_{\alpha} ^\ast, x_2 X_{\alpha+\beta} +(a_2-i) X^\ast _{\alpha+3\beta}, -x_2X_{\alpha+3\beta} +(a_2-i) X_{\alpha+\beta} ^\ast, (b+i) X_{\beta}+cX _{2\alpha+3\beta}, -c X_{\beta} ^\ast +(b+i)X_{2\alpha+3\beta} ^\ast  \rbrace$.  Thus
\begin{eqnarray*}
	\Nij (x_1 X_{\alpha} +(a_1-i) X^\ast _{\alpha+2\beta}, -x_2X_{\alpha+3\beta} +(a_2-i) X_{\alpha+\beta} ^\ast, -c X_{\beta} ^\ast +(b+i)X_{2\alpha+3\beta} ^\ast) \\ = -\frac{1}{2}x_1x_2(b+i)m_{\alpha,\alpha+3\beta}\not= 0,
\end{eqnarray*}
and hence $\mathbb{J}$ is not integrable.
\\
\\
{\bf 8.} Finally, we have $L = \textrm{span}\lbrace x_1 X_{\alpha} +(a_1-i) X^\ast _{\alpha+2\beta}, -x_1X_{\alpha+2\beta} +(a_1-i)X_{\alpha} ^\ast, x_2 X_{\alpha+\beta} +(a_2-i) X^\ast _{\alpha+3\beta}, -x_2X_{\alpha+3\beta} +(a_2-i) X_{\alpha+\beta} ^\ast, x_3 X_{\beta}+(a_3-i)X _{2\alpha+3\beta} ^\ast, -x_3 X_{2\alpha+3\beta} +(a_3-i)X_{\beta} ^\ast  \rbrace$. On the one hand, note that
\begin{eqnarray*}
	\Nij (x_1 X_{\alpha} +(a_1-i) X^\ast _{\alpha+2\beta}, -x_2X_{\alpha+3\beta} +(a_2-i) X_{\alpha+\beta} ^\ast, x_3 X_{\beta}+(a_3-i)X _{2\alpha+3\beta} ^\ast) \\ -\frac{1}{2}x_1(x_2(a_3-i)m_{\alpha,\alpha+3\beta}+x_3(a_2-i)m_{\alpha,\beta})
\end{eqnarray*}
and
\begin{eqnarray*}
	\Nij (-x_1X_{\alpha+2\beta} +(a_1-i)X_{\alpha} ^\ast, x_2 X_{\alpha+\beta} +(a_2-i) X^\ast _{\alpha+3\beta}, x_3 X_{\beta}+(a_3-i)X _{2\alpha+3\beta} ^\ast) \\ = -\frac{1}{2}x_1(x_2(a_3-i)m_{\alpha+2\beta,\alpha+\beta}-x_3(a_2-i)m_{\alpha+2\beta,\beta}).
\end{eqnarray*}
On the other hand, if we assume that $\mathbb{J}$ is integrable it follows from the condition $\Nij|_L=0$ that 
\[
\left\lbrace\begin{array}{l}
x_2(a_3-i)m_{\alpha,\alpha+3\beta}+x_3(a_2-i)m_{\alpha,\beta} = 0\\
x_2(a_3-i)m_{\alpha+2\beta,\alpha+\beta}-x_3(a_2-i)m_{\alpha+2\beta,\beta} = 0.
\end{array}\right.
\]
In particular, from the imaginary part we have
\begin{equation}\label{g2-3}
\left\lbrace\begin{array}{l}
x_2m_{\alpha,\alpha+3\beta}+x_3m_{\alpha,\beta} = 0\\
x_2m_{\alpha+2\beta,\alpha+\beta}-x_3m_{\alpha+2\beta,\beta} = 0
\end{array}\right. \quad \textnormal{implies} \quad \frac{m_{\alpha,\beta}}{m_{\alpha,\alpha+3\beta}}= -\frac{x_2}{x_3} = -\frac{m_{\alpha+2\beta,\beta}}{m_{\alpha+2\beta,\alpha+\beta}}.
\end{equation}
But, observe that Equation \eqref{g2-3} contradicts Equation \eqref{g2-2} and hence $\mathbb{J}$ can not be integrable.

%In consequence, there are no integrable invariant generalized almost complex structures on $G_2$.
\begin{proposition}
	Let $\mathbb{F}$ be the maximal flag manifold of type $G_2$. Then, the invariant generalized almost complex structures on $\mathbb{F}$ are not integrable.
	%The invariant generalized almost complex structures on $G_2$ are not integrable.
\end{proposition}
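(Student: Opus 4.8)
The plan is to reduce the statement to the explicit case analysis prepared above. By Theorem~\ref{P1} together with Lemmas~\ref{decomposition} and~\ref{isotropic}, every invariant generalized almost complex structure $\mathbb{J}$ on the maximal flag $\mathbb{F}$ of type $G_2$ decomposes as $\mathbb{J}=\mathbb{J}_{[\alpha]}\oplus\mathbb{J}_{[\alpha+\beta]}\oplus\mathbb{J}_{[\beta]}$ over the three two-dimensional $M$-equivalence class root subspaces. By Remark~\ref{2DimensionalCases} each summand is either of complex type $\mathcal{J}^c$ or of noncomplex type $\mathcal{J}^{nc}$, so $\mathbb{J}$ falls into exactly one of the eight configurations listed in Table~\ref{table3}. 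Since, by part~$\iota\iota\iota.$ of Definition~\ref{Def2}, integrability is equivalent to $\Nij|_L=0$ on the $+i$-eigenbundle $L$, it is enough to exhibit in each configuration a triple of elements of $L$ on which $\Nij$ does not vanish.

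For the computation I would write $L$ explicitly from the bases $L_c$ and $L_{nc}$ of the preceding Lemma and evaluate $\Nij$ on suitable triples by means of Corollary~\ref{usefulformula1}, which collapses every evaluation to a single structure constant $m_{\gamma,\delta}$ attached to a root triple $(\gamma,\delta,\gamma+\delta)$. In the mixed configurations (cases 3, 4, 5, 6, and 7) a single well-chosen triple already produces a nonzero multiple of some $m_{\gamma,\delta}$, ruling out integrability on the spot.

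The main obstacle lies in cases 1, 2, and 8, where the natural triples yield not a single monomial but a pair of linear conditions on the defining parameters. Here I would evaluate $\Nij$ on two independent triples; imposing $\Nij|_L=0$ and extracting the imaginary part then forces the proportionality
\[
\frac{m_{\alpha,\beta}}{m_{\alpha,\alpha+3\beta}}=-\frac{m_{\alpha+2\beta,\beta}}{m_{\alpha+2\beta,\alpha+\beta}}.
\]
On the other hand, the Jacobi identity applied to $X_{\alpha+2\beta},X_\alpha,X_\beta$ gives $m_{\alpha,\beta}\,m_{\alpha+2\beta,\alpha+\beta}=m_{\alpha+2\beta,\beta}\,m_{\alpha,\alpha+3\beta}$, i.e. the same ratio with the opposite sign. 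This sign clash is the crux of the argument: it is precisely what makes $\Nij|_L=0$ impossible. Case 1, in which all three summands are of complex type, is an invariant almost complex structure whose non-integrability is already known from \cite{FBS}, so it needs no separate computation. Assembling the eight cases yields the conclusion.
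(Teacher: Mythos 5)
Your proposal is correct and follows essentially the same route as the paper: the same decomposition $\mathbb{J}=\mathbb{J}_{[\alpha]}\oplus\mathbb{J}_{[\alpha+\beta]}\oplus\mathbb{J}_{[\beta]}$ into the eight configurations of Table \ref{table3}, single $\Nij$-evaluations via Corollary \ref{usefulformula1} to kill cases 3--7, the citation of \cite{FBS} for the all-complex case, and for cases 2 and 8 exactly the paper's argument of extracting the ratio $m_{\alpha,\beta}/m_{\alpha,\alpha+3\beta}=-m_{\alpha+2\beta,\beta}/m_{\alpha+2\beta,\alpha+\beta}$ from $\Nij|_L=0$ and contradicting it with the Jacobi identity on $X_{\alpha+2\beta},X_\alpha,X_\beta$ (the paper's Equations \eqref{g2-1}, \eqref{g2-3} versus \eqref{g2-2}).
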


%\subsection*{Case $D_l$, $l\geq 5$}
\noindent {\bf Case $D_l$ with $l\geq 5$.} The $M$-equivalence classes of roots are given by 
\[
\lbrace\pm \lambda_j-\lambda_i\rbrace,\ 1\leq i<j\leq l.
\]
For the conclusion that we are going to obtain here will be enough to consider the $M$-equivalence classes $\lbrace \lambda_1-\lambda_2,\lambda_1+\lambda_2\rbrace$, $\lbrace \lambda_2-\lambda_3,\lambda_2+\lambda_3\rbrace$ and $\lbrace \lambda_1-\lambda_3,\lambda_1+\lambda_3\rbrace$. If $\mathbb{J}$ is an invariant generalized almost complex structure on $\mathbb{F}$, then we have that the possibilities for $\mathbb{J}$ at these $M$-equivalence classes are given in Table \ref{table4}.
\begin{table}[htb!]
	\begin{tabular}{|r|c|c|c|}
		\hline & $\mathbb{J}_{[\lambda_1-\lambda_2]}$ & $\mathbb{J}_{[\lambda_2-\lambda_3]}$ & $\mathbb{J}_{[\lambda_1-\lambda_3]}$ \\ \hline
		{\bf 1.} & complex & complex & complex\\
		{\bf 2.} & complex & complex & noncomplex\\
		{\bf 3.} & complex & noncomplex & complex\\
		{\bf 4.} & noncomplex & complex & complex\\
		{\bf 5.} & complex & noncomplex & noncomplex\\
		{\bf 6.} & noncomplex & complex & noncomplex\\
		{\bf 7.} & complex & noncomplex & noncomplex\\
		{\bf 8.} & noncomplex & noncomplex & noncomplex \\ \hline
	\end{tabular}
	\caption{Possible combinations for $\mathbb{J}$ on $D_l$ with $l\geq 5$}\label{table4}
\end{table}

\noindent {\bf 1.} The $+i$-eigenspace is given by $L = \textrm{span}\lbrace (b_1+i)X_{\lambda_1-\lambda_2} +c_1 X_{\lambda_1+\lambda_2},-c_1X_{\lambda_1-\lambda_2} ^\ast + (b_1+i)X^\ast _{\lambda_1+\lambda_2}, (b_2+i)X_{\lambda_2-\lambda_3} +c_2X_{\lambda_2+\lambda_3}, -c_2X^\ast _{\lambda_2-\lambda_3} + (b_2+i)X_{\lambda_2+\lambda_3} ^\ast, (b_3+i)X_{\lambda_1-\lambda_3} +c_3X_{\lambda_1+\lambda_3}, -c_3X_{\lambda_1-\lambda_3} ^\ast +(b_3+i)X_{\lambda_1+\lambda_3}\rbrace$. Thus we have
\begin{eqnarray*}
	\Nij ((b_1+i)X_{\lambda_1-\lambda_2} +c_1 X_{\lambda_1+\lambda_2}, (b_2+i)X_{\lambda_2-\lambda_3} +c_2X_{\lambda_2+\lambda_3}, -c_3X_{\lambda_1-\lambda_3} ^\ast +(b_3+i)X_{\lambda_1+\lambda_3}) \\ = -\frac{1}{2}(b_1+i)(c_3(b_2+i)m_{\lambda_1-\lambda_2,\lambda_2-\lambda_3} -c_2(b_3+i)m_{\lambda_1-\lambda_2,\lambda_2+\lambda_3}),
\end{eqnarray*}
and
\begin{eqnarray*}
	\Nij (-c_1X_{\lambda_1-\lambda_2} ^\ast + (b_1+i)X^\ast _{\lambda_1+\lambda_2}, (b_2+i)X_{\lambda_2-\lambda_3} +c_2X_{\lambda_2+\lambda_3},(b_3+i)X_{\lambda_1-\lambda_3} +c_3X_{\lambda_1+\lambda_3}) \\ =\frac{1}{2}(b_1+i)(c_3(b_2+i)m_{\lambda_2-\lambda_3,\lambda_1+\lambda_3}+c_2(b_3+i)m_{\lambda_2+\lambda_3,\lambda_1-\lambda_3}).
\end{eqnarray*}
So, if $\Nij|_L = 0$ then we get
\[
\left\lbrace \begin{array}{l}
c_3(b_2+i)m_{\lambda_1-\lambda_2,\lambda_2-\lambda_3} -c_2(b_3+i)m_{\lambda_1-\lambda_2,\lambda_2+\lambda_3} = 0\\
c_3(b_2+i)m_{\lambda_2-\lambda_3,\lambda_1+\lambda_3}+c_2(b_3+i)m_{\lambda_2+\lambda_3,\lambda_1-\lambda_3} = 0.
\end{array}\right.
\]
In particular, from the imaginary part we obtain the equation
\begin{equation}\label{dl-1}
\left\lbrace \begin{array}{l}
c_3m_{\lambda_1-\lambda_2,\lambda_2-\lambda_3} -c_2m_{\lambda_1-\lambda_2,\lambda_2+\lambda_3} = 0\\
c_3m_{\lambda_2-\lambda_3,\lambda_1+\lambda_3}+c_2m_{\lambda_2+\lambda_3,\lambda_1-\lambda_3} = 0
\end{array}\right. \quad \textnormal{implies} \quad \frac{m_{\lambda_1-\lambda_2,\lambda_2+\lambda_3}}{m_{\lambda_1-\lambda_2,\lambda_2-\lambda_3}}=\frac{c_3}{c_2}= -\frac{m_{\lambda_2-\lambda_3,\lambda_1-\lambda_3}}{m_{\lambda_2-\lambda_3,\lambda_1+\lambda_3}}.
\end{equation}
However, by applying the Jacobi identity to the vectors $X_{\lambda_1-\lambda_2}$, $X_{\lambda_2-\lambda_3}$ and $X_{\lambda_2-\lambda_3}$ we obtain 
\begin{equation}\label{dl-2}
\frac{m_{\lambda_1-\lambda_2,\lambda_2+\lambda_3}}{m_{\lambda_1-\lambda_2,\lambda_2-\lambda_3}} = \frac{m_{\lambda_2-\lambda_3,\lambda_1-\lambda_3}}{m_{\lambda_2-\lambda_3,\lambda_1+\lambda_3}}.
\end{equation}
Since Equation \eqref{dl-2} contradicts Equation \eqref{dl-1}, then $\mathbb{J}$ is not integrable.
\\
\\
{\bf 2.} Here the $+i$-eigenspace is given by $L = \textrm{span}\lbrace (b_1+i)X_{\lambda_1-\lambda_2} +c_1 X_{\lambda_1+\lambda_2}, -c_1X_{\lambda_1-\lambda_2} ^\ast +(b_1 + i)X_{\lambda_1+\lambda_2} ^\ast, (b_2+i)X_{\lambda_2-\lambda_3} +c_2 X_{\lambda_2+\lambda_3}, -c_2X_{\lambda_2-\lambda_3} ^\ast +(b_2 + i)X_{\lambda_2+\lambda_3} ^\ast, xX_{\lambda_1-\lambda_3}+(a-i)X^\ast _{\lambda_1+\lambda_3}, -xX_{\lambda_1+\lambda_3} +(a-i)X_{\lambda_1-\lambda_3} ^\ast \rbrace$. Observe that
\begin{eqnarray*}
	\Nij (-c_1X_{\lambda_1-\lambda_2} ^\ast +(b_1 + i)X_{\lambda_1+\lambda_2} ^\ast, (b_2+i)X_{\lambda_2-\lambda_3} +c_2 X_{\lambda_2+\lambda_3}, xX_{\lambda_1-\lambda_3}+(a-i)X^\ast _{\lambda_1+\lambda_3}) \\ =\frac{1}{2} (b_1+i)c_2xm_{\lambda_1+\lambda_3,\lambda_1-\lambda_3} \not= 0,
\end{eqnarray*}
which immediately implies that $\mathbb{J}$ is not integrable.
\\
\\
{\bf 3.} This case is also not integrable because the $+i$-eigenspace is $L = \textrm{span}\lbrace (b_1+i)X_{\lambda_1-\lambda_2} +c_1 X_{\lambda_1+\lambda_2}, -c_1X_{\lambda_1-\lambda_2} ^\ast +(b_1 + i)X_{\lambda_1+\lambda_2} ^\ast, xX_{\lambda_2-\lambda_3} +(a-i) X^\ast _{\lambda_2+\lambda_3}, -xX_{\lambda_2+\lambda_3} +(a- i)X_{\lambda_2-\lambda_3} ^\ast, (b_2+i)X_{\lambda_1-\lambda_3}+c_2X_{\lambda_1+\lambda_3},  -c_2 X_{\lambda_1-\lambda_3} ^\ast +(b_2+i)X_{\lambda_1+\lambda_3} ^\ast \rbrace$ and we have that
\begin{eqnarray*}
	\Nij ((b_1+i)X_{\lambda_1-\lambda_2} +c_1 X_{\lambda_1+\lambda_2}, xX_{\lambda_2-\lambda_3} +(a-i) X^\ast _{\lambda_2+\lambda_3}, -c_2 X_{\lambda_1-\lambda_3} ^\ast +(b_2+i)X_{\lambda_1+\lambda_3} ^\ast) \\ = -\frac{1}{2} xc_2(b_1+i)m_{\lambda_1-\lambda_2,\lambda_2-\lambda_3}\not= 0.
\end{eqnarray*}
\\
{\bf 4.} In this case we have  $L = \textrm{span}\lbrace xX_{\lambda_1-\lambda_2} +(a-i) X^\ast _{\lambda_1+\lambda_2}, -xX_{\lambda_1+\lambda_2} +(a-i)X_{\lambda_1-\lambda_2} ^\ast, (b_1+i)X_{\lambda_2-\lambda_3} +c_1 X_{\lambda_2+\lambda_3}, -c_1X_{\lambda_2-\lambda_3} ^\ast +(b_1 + i)X_{\lambda_2+\lambda_3} ^\ast, (b_2+i)X_{\lambda_1-\lambda_3}+c_2X_{\lambda_1+\lambda_3}, -c_2 X_{\lambda_1-\lambda_3} ^\ast +(b_2+i)X_{\lambda_1+\lambda_3} ^\ast  \rbrace$. On the one hand, we get that
\begin{eqnarray*}
	\Nij (xX_{\lambda_1-\lambda_2} +(a-i) X^\ast _{\lambda_1+\lambda_2}, (b_1+i)X_{\lambda_2-\lambda_3} +c_1 X_{\lambda_2+\lambda_3}, -c_2 X_{\lambda_1-\lambda_3} ^\ast +(b_2+i)X_{\lambda_1+\lambda_3} ^\ast) \\ = -\frac{1}{2}x(c_2(b_1+i)m_{\lambda_1-\lambda_2,\lambda_2-\lambda_3}-c_1(b_2+i)m_{\lambda_1-\lambda_2,\lambda_2+\lambda_3})
\end{eqnarray*}
and
\begin{eqnarray*}
	\Nij (xX_{\lambda_1-\lambda_2} +(a-i) X^\ast _{\lambda_1+\lambda_2}, (b_1+i)X_{\lambda_2-\lambda_3} +c_1 X_{\lambda_2+\lambda_3}, (b_2+i)X_{\lambda_1-\lambda_3}+c_2X_{\lambda_1+\lambda_3}) \\ = \frac{1}{2}(a-i)(c_2(b_1+i)m_{\lambda_2-\lambda_3,\lambda_1+\lambda_3}+c_1(b_2+i)m_{\lambda_2+\lambda_3,\lambda_1-\lambda_3}).
\end{eqnarray*}
So, if we assume that $\mathbb{J}$ is integrable then the conditions $\Nij|_L=0$ implies that
\[
\left\lbrace \begin{array}{l}
c_2(b_1+i)m_{\lambda_1-\lambda_2,\lambda_2-\lambda_3}-c_1(b_2+i)m_{\lambda_1-\lambda_2,\lambda_2+\lambda_3} =0 \\
c_2(b_1+i)m_{\lambda_2-\lambda_3,\lambda_1+\lambda_3}+c_1(b_2+i)m_{\lambda_2+\lambda_3,\lambda_1-\lambda_3} = 0.
\end{array}\right.
\]
In particular, 
\begin{equation}\label{dl-3}
\left\lbrace \begin{array}{l}
c_2m_{\lambda_1-\lambda_2,\lambda_2-\lambda_3}-c_1m_{\lambda_1-\lambda_2,\lambda_2+\lambda_3} =0 \\
c_2m_{\lambda_2-\lambda_3,\lambda_1+\lambda_3}+c_1m_{\lambda_2+\lambda_3,\lambda_1-\lambda_3} = 0
\end{array}\right. \quad \textnormal{implies} \quad \frac{m_{\lambda_1-\lambda_2,\lambda_2+\lambda_3}}{m_{\lambda_1-\lambda_2,\lambda_2-\lambda_3}}=-\frac{c_2}{c_1}=-\frac{m_{\lambda_1-\lambda_3,\lambda_2+\lambda_3}}{m_{\lambda_1+\lambda_3,\lambda_2-\lambda_3}}
\end{equation}
which contradicts Equation \eqref{dl-2}. Thus, $\mathbb{J}$ can not be integrable.
\\
\\
{\bf 5.} Here we have that $L = \textrm{span}\lbrace (b+i)X_{\lambda_1-\lambda_2} +c X_{\lambda_1+\lambda_2}, -cX_{\lambda_1-\lambda_2} ^\ast +(b+i)X_{\lambda_1+\lambda_2} ^\ast, x_1 X_{\lambda_2-\lambda_3} +(a_1 - i) X^\ast _{\lambda_2+\lambda_3}, -x_1X_{\lambda_2+\lambda_3} +(a_1 - i)X_{\lambda_2-\lambda_3} ^\ast, x_2 X_{\lambda_1-\lambda_3}+(a_2-i)X^\ast _{\lambda_1+\lambda_3}, -x_2 X_{\lambda_1+\lambda_3} +(a_2-i)X_{\lambda_1-\lambda_3} ^\ast  \rbrace$. Then, we get
\begin{eqnarray*}
	\Nij ((b+i)X_{\lambda_1-\lambda_2} +c X_{\lambda_1+\lambda_2}, x_1 X_{\lambda_2-\lambda_3} +(a_1 - i) X^\ast _{\lambda_2+\lambda_3}, -x_2 X_{\lambda_1+\lambda_3} +(a_2-i)X_{\lambda_1-\lambda_3} ^\ast) \\ =\frac{1}{2}x_1(a_2-i)(b+i)m_{\lambda_1-\lambda_2,\lambda_2-\lambda_3}\not= 0,
\end{eqnarray*}
and hence  $\mathbb{J}$ is not integrable.
\\
\\
{\bf 6.} In this case $L = \textrm{span}\lbrace x_1 X_{\lambda_1-\lambda_2} +(a_1-i) X^\ast _{\lambda_1+\lambda_2}, -x_1X_{\lambda_1+\lambda_2} +(a_1-i)X_{\lambda_1-\lambda_2} ^\ast, (b+i) X_{\lambda_2-\lambda_3} +c X_{\lambda_2+\lambda_3}, -cX_{\lambda_2-\lambda_3} ^\ast +(b+i)X_{\lambda_2+\lambda_3} ^\ast, x_2 X_{\lambda_1-\lambda_3}+(a_2-i)X^\ast _{\lambda_1+\lambda_3}, -x_2 X_{\lambda_1+\lambda_3} +(a_2-i)X_{\lambda_1-\lambda_3} ^\ast  \rbrace$, then observe that
\begin{eqnarray*}
	\Nij (x_1 X_{\lambda_1-\lambda_2} +(a_1-i) X^\ast _{\lambda_1+\lambda_2}, (b+i) X_{\lambda_2-\lambda_3} +c X_{\lambda_2+\lambda_3}, -x_2 X_{\lambda_1+\lambda_3} +(a_2-i)X_{\lambda_1-\lambda_3} ^\ast) \\ = \frac{1}{2}(b+i)(x_1(a_2-i)m_{\lambda_1-\lambda_2,\lambda_2-\lambda_3}-x_2(a_1-i)m_{\lambda_2-\lambda_3,\lambda_1+\lambda_3})
\end{eqnarray*}
and
\begin{eqnarray*}
	\Nij (x_1 X_{\lambda_1-\lambda_2} +(a_1-i) X^\ast _{\lambda_1+\lambda_2}, (b+i) X_{\lambda_2-\lambda_3} +c X_{\lambda_2+\lambda_3}, x_2 X_{\lambda_1-\lambda_3}+(a_2-i)X^\ast _{\lambda_1+\lambda_3}) \\ =\frac{1}{2}c(x_1(a_2-i)m_{\lambda_1-\lambda_2,\lambda_2+\lambda_3}+x_2(a_1-i)m_{\lambda_2-\lambda_3,\lambda_1-\lambda_3}).
\end{eqnarray*}
If $\Nij|_L =0$, then  we have
\[
\left\lbrace \begin{array}{l}
x_1(a_2-i)m_{\lambda_1-\lambda_2,\lambda_2-\lambda_3}-x_2(a_1-i)m_{\lambda_2-\lambda_3,\lambda_1+\lambda_3} =0\\
x_1(a_2-i)m_{\lambda_1-\lambda_2,\lambda_2+\lambda_3}+x_2(a_1-i)m_{\lambda_2-\lambda_3,\lambda_1-\lambda_3} =0.
\end{array}\right.
\]
From the imaginary part we obtain that
\begin{equation}\label{dl-4}
\left\lbrace \begin{array}{l}
x_1m_{\lambda_1-\lambda_2,\lambda_2-\lambda_3}-x_2m_{\lambda_2-\lambda_3,\lambda_1+\lambda_3} =0\\
x_1m_{\lambda_1-\lambda_2,\lambda_2+\lambda_3}+x_2m_{\lambda_2-\lambda_3,\lambda_1-\lambda_3} =0
\end{array}\right.\quad \textnormal{implies} \quad \frac{m_{\lambda_2-\lambda_3,\lambda_1+\lambda_3}}{m_{\lambda_1-\lambda_2,\lambda_2-\lambda_3}}=\frac{x_1}{x_2}= -\frac{m_{\lambda_2+\lambda_3,\lambda_1-\lambda_3}}{m_{\lambda_1-\lambda_2,\lambda_2+\lambda_3}}
\end{equation}
fact which also contradicts Equation \eqref{dl-2} and hence $\mathbb{J}$ can not be integrable.
\\
\\
{\bf 7.} Here the $+i$-eigenspace is $L = \textrm{span}\lbrace x_1 X_{\lambda_1-\lambda_2} +(a_1-i) X^\ast _{\lambda_1+\lambda_2}, -x_1X_{\lambda_1+\lambda_2} +(a_1-i)X_{\lambda_1-\lambda_2} ^\ast, x_2 X_{\lambda_2-\lambda_3} +(a_2-i) X^\ast _{\lambda_2+\lambda_3}, -x_2X_{\lambda_2+\lambda_3} +(a_2-i) X_{\lambda_2-\lambda_3} ^\ast, (b+i) X_{\lambda_1-\lambda_3}+cX _{\lambda_1+\lambda_3}, -c X_{\lambda_1-\lambda_3} ^\ast +(b+i)X_{\lambda_1+\lambda_3} ^\ast  \rbrace$ and
\begin{eqnarray*}
	\Nij (x_1 X_{\lambda_1-\lambda_2} +(a_1-i) X^\ast _{\lambda_1+\lambda_2}, x_2 X_{\lambda_2-\lambda_3} +(a_2-i) X^\ast _{\lambda_2+\lambda_3}, -c X_{\lambda_1-\lambda_3} ^\ast +(b+i)X_{\lambda_1+\lambda_3} ^\ast) \\ =-\frac{1}{2}x_1x_2cm_{\lambda_1-\lambda_2,\lambda_2-\lambda_3}\not= 0.
\end{eqnarray*}
Therefore $\mathbb{J}$ is not integrable.
\\
\\
{\bf 8.} Finally, we have $L = \textrm{span}\lbrace x_1 X_{\lambda_1-\lambda_2} +(a_1-i) X^\ast _{\lambda_1+\lambda_2}, -x_1X_{\lambda_1+\lambda_2} +(a_1-i)X_{\lambda_1-\lambda_2} ^\ast, x_2 X_{\lambda_2-\lambda_3} +(a_2-i) X^\ast _{\lambda_2+\lambda_3}, -x_2X_{\lambda_2+\lambda_3} +(a_2-i) X_{\lambda_2-\lambda_3} ^\ast, x_3 X_{\lambda_1-\lambda_3}+(a_3-i)X _{\lambda_1+\lambda_3} ^\ast, -x_3 X_{\lambda_1+\lambda_3} +(a_3-i)X_{\lambda_1-\lambda_3} ^\ast  \rbrace$. Note that
\begin{eqnarray*}
	\Nij (x_1 X_{\lambda_1-\lambda_2} +(a_1-i) X^\ast _{\lambda_1+\lambda_2}, x_2 X_{\lambda_2-\lambda_3} +(a_2-i) X^\ast _{\lambda_2+\lambda_3}, -x_3 X_{\lambda_1+\lambda_3} +(a_3-i)X_{\lambda_1-\lambda_3} ^\ast) \\ = \frac{1}{2}x_2(x_1 (a_3-i)m_{\lambda_1-\lambda_2,\lambda_2-\lambda_3}-x_3(a_1-i)m_{\lambda_2-\lambda_3,\lambda_1+\lambda_3}),
\end{eqnarray*}
and
\begin{eqnarray*}
	\Nij (x_1 X_{\lambda_1-\lambda_2} +(a_1-i) X^\ast _{\lambda_1+\lambda_2}, -x_2X_{\lambda_2+\lambda_3} +(a_2-i) X_{\lambda_2-\lambda_3} ^\ast, x_3 X_{\lambda_1-\lambda_3}+(a_3-i)X _{\lambda_1+\lambda_3} ^\ast) \\ =-\frac{1}{2}x_2(x_1(a_3-i)m_{\lambda_1-\lambda_2,\lambda_2+\lambda_3}+x_3(a_1-i)m_{\lambda_2+\lambda_3,\lambda_1-\lambda_3}).
\end{eqnarray*}
Now, if we assume that $\mathbb{J}$ is integrable then we must have
\[
\left\lbrace \begin{array}{l}
x_1 (a_3-i)m_{\lambda_1-\lambda_2,\lambda_2-\lambda_3}-x_3(a_1-i)m_{\lambda_2-\lambda_3,\lambda_1+\lambda_3} = 0\\
x_1(a_3-i)m_{\lambda_1-\lambda_2,\lambda_2+\lambda_3}+x_3(a_1-i)m_{\lambda_2+\lambda_3,\lambda_1-\lambda_3} = 0.
\end{array}\right.
\]
From the imaginary part we get that
\begin{equation}\label{dl-5}
\left\lbrace \begin{array}{l}
x_1 m_{\lambda_1-\lambda_2,\lambda_2-\lambda_3}-x_3 m_{\lambda_2-\lambda_3,\lambda_1+\lambda_3} = 0\\
x_1 m_{\lambda_1-\lambda_2,\lambda_2+\lambda_3}+x_3 m_{\lambda_2+\lambda_3,\lambda_1-\lambda_3} = 0
\end{array}\right.\quad \textnormal{implies} \quad \frac{m_{\lambda_1-\lambda_2,\lambda_2+\lambda_3}}{m_{\lambda_2+\lambda_3,\lambda_1-\lambda_3}}=-\frac{x_3}{x_1} =-\frac{m_{\lambda_1-\lambda_2,\lambda2-\lambda_3}}{m_{\lambda_2-\lambda_3,\lambda_1+\lambda_3}}.
\end{equation}
It is simple to see that Equation \eqref{dl-5} contradicts Equation \eqref{dl-2} and hence $\mathbb{J}$ can not be integrable.
%In conclusion, there are no integrable invariant generalized almost complex structures on $D_l$ with $l\geq 5$.
\begin{proposition}
	Let $\mathbb{F}$ be the maximal flag manifold of type $D_l$ with $l\geq 5$. Then, the invariant generalized almost complex structures on $\mathbb{F}$ are not integrable.
	%The invariant generalized almost complex structures on $D_l$ with $l\geq 5$ are not integrable.
\end{proposition}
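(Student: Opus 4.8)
The plan is to use the $M$-class decomposition of Lemmas \ref{decomposition} and \ref{isotropic}, which writes every invariant generalized almost complex structure as $\mathbb{J}=\bigoplus_{[\alpha]_M}\mathbb{J}_{[\alpha]_M}$ on the eigenbundle $L=\bigoplus_{[\alpha]_M}L_{[\alpha]_M}$. For $D_l$ with $l\geq 5$ every $M$-equivalence class $\lbrace\pm\lambda_j-\lambda_i\rbrace$ has exactly two roots, so by Remark \ref{2DimensionalCases} each factor is either of complex type $\mathcal{J}^c_{[\alpha]_M}$ or of noncomplex type $\mathcal{J}^{nc}_{[\alpha]_M}$. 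By part $\iota\iota\iota$ of Definition \ref{Def2}, $\mathbb{J}$ is integrable precisely when $\Nij|_L=0$, and Corollary \ref{usefulformula1} tells us that $\Nij$ is supported on additive root triples $(\alpha,\beta,\alpha+\beta)$. Since the six roots of the three classes $[\lambda_1-\lambda_2]_M$, $[\lambda_2-\lambda_3]_M$, $[\lambda_1-\lambda_3]_M$ close up under addition (e.g. $(\lambda_1-\lambda_2)+(\lambda_2-\lambda_3)=\lambda_1-\lambda_3$ and $(\lambda_1-\lambda_2)+(\lambda_2+\lambda_3)=\lambda_1+\lambda_3$), the restriction of $\Nij$ to eigenvectors drawn from these three classes depends only on the types assigned to them; hence it suffices to show that $\Nij|_L\neq 0$ for every one of the $2^3=8$ type-assignments recorded in Table \ref{table4}, and this obstruction then persists for any global $\mathbb{J}$.

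I would then run through the eight assignments using the explicit bases for $L_c$ and $L_{nc}$ computed above together with the single-triple formula $\Nij(X_\alpha,X_\beta,X^\ast_{\alpha+\beta})=\frac{1}{2}m_{\alpha,\beta}$. In cases 2, 3, 5, and 7, I expect a single well-chosen triple of eigenvectors to yield a value of the form $\lambda\,m_{\mu,\nu}$ with $\lambda\neq 0$ and the structure constant $m_{\mu,\nu}\neq 0$ (the sums of the roots involved are again roots of $D_l$), so integrability fails outright and the case closes immediately.

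The substantive cases are 1, 4, 6, and 8, where each individual Nijenhuis value is complex and could a priori vanish by cancellation of its real and imaginary parts. Here the plan is to evaluate $\Nij$ on two independent admissible triples, obtaining a homogeneous $2\times2$ system in the remaining parameters; taking its imaginary part produces two linear relations whose joint solvability forces a ratio identity among the constants $m_{\cdot,\cdot}$, namely \eqref{dl-1} for case 1 and \eqref{dl-3}, \eqref{dl-4}, \eqref{dl-5} for cases 4, 6, 8. I would then apply the Jacobi identity to $X_{\lambda_1-\lambda_2}$, $X_{\lambda_2-\lambda_3}$, $X_{\lambda_2+\lambda_3}$ to obtain the complementary relation \eqref{dl-2}, and note that it differs from each of the integrability-forced relations by a sign. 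This sign mismatch is the contradiction that rules these cases out.

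The main difficulty I anticipate is organizational rather than conceptual: in each assignment one must select eigenvectors whose Nijenhuis pairing is both nonzero and genuinely couples the three classes, and verify that every structure constant that appears is nonzero, which holds because the relevant root sums lie in the $D_l$ root system. The conceptual crux is the single Jacobi-identity relation \eqref{dl-2}, whose sign disagreement with the integrability conditions defeats all the mixed configurations uniformly; combined with the direct nonvanishing in the remaining cases, this yields the proposition once the eight cases are collected.
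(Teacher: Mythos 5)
Your proposal is correct and follows essentially the same route as the paper: restrict to the three $M$-classes $[\lambda_1-\lambda_2]_M$, $[\lambda_2-\lambda_3]_M$, $[\lambda_1-\lambda_3]_M$, split the eight type-assignments of Table \ref{table4} into those killed by a single nonzero Nijenhuis value (cases 2, 3, 5, 7) and those requiring a two-equation system whose imaginary parts force the ratio identities \eqref{dl-1}, \eqref{dl-3}, \eqref{dl-4}, \eqref{dl-5}, and defeat the latter by the sign clash with the Jacobi-identity relation \eqref{dl-2}. The case grouping, the choice of triples, and the concluding contradiction all coincide with the paper's argument.
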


For the remaining case we will use the expression described in Equation \eqref{CourantBracket} for the Courant bracket. More specifically, we will use the fact that a generalized complex structure $\mathbb{J}$ is integrable if and only if $N_\mathbb{J} \equiv 0$, where the `Nijenhuis tensor' is given by
\begin{equation}
N_{\mathbb{J}} (A,B) = [\mathbb{J}A,\mathbb{J}B]-[A,B]-\mathbb{J}[A,\mathbb{J}B]-\mathbb{J}[\mathbb{J}A,B],
\end{equation}
with $[\cdot,\cdot]$ denoting the Courant bracket. To make the computations easier recall that the Courant bracket for the basic vectors of a Weyl basis is given by
\[
[X_{\alpha},X_{\beta}] = \left\lbrace \begin{array}{ll}
m_{\alpha,\beta}X_{\alpha+\beta}, \quad \textrm{if } \alpha+\beta \textrm{ is a root} \\
0, \quad \textrm{otherwise}
\end{array}\right.  \quad
[X_{\alpha},X^\ast _{\beta}] = \left\lbrace \begin{array}{ll}
m_{\alpha,-\beta}X^\ast _{\beta-\alpha}, \quad \textrm{if } \beta-\alpha \textrm{ is a root} \\
0, \quad \textrm{otherwise}
\end{array} \right. 
\]
and $[X^\ast _{\alpha},X^\ast _{\beta}] = 0$.\\

\noindent {\bf Case $C_l$ with $l\geq 6$ and $l$ even.} The $M$-equivalence classes are given by
\[
\lbrace \lambda_i-\lambda_s,\lambda_i+\lambda_s \rbrace,\ 1\leq i<s\leq l\quad \textnormal{and}\quad \lbrace 2\lambda_1,\cdots,2\lambda_l \rbrace.
\]
Let $\mathbb{J}$ be an invariant generalized almost complex structure on $\mathbb{F}$. Observe that $\mathbb{J}_{[\lambda_i-\lambda_s]}$ can be either of complex or noncomplex type, for all $1\leq i<s\leq l$. Therefore, we will see what happens with the integrability of $\mathbb{J}$ when analyzing $\mathbb{J}_{[\lambda_1-\lambda_2]}$ in both cases, complex and noncomplex.
\\
\\
{\bf 1.} Suppose that $\mathbb{J}_{[\lambda_1-\lambda_2]}$ is of complex type and let us write $\mathbb{J}(X_{2\lambda_2}) = \sum_{j=1} ^l c_jX_{2\lambda_j} + \sum_{j=1} ^l d_jX^\ast _{2\lambda_j}$. Assuming the notation of Remark \ref{2DimensionalCases} we have
\begin{eqnarray*}
	N_{\mathbb{J}}(X_{\lambda_1+\lambda_2},X_{2\lambda_2}) & = & \left[ -\frac{(1+b^2)}{c}X_{\lambda_1-\lambda_2}-bX_{\lambda_1+\lambda_2},\sum_{j=1} ^l c_jX_{2\lambda_j} + \sum_{j=1} ^l d_jX^\ast _{2\lambda_j}\right] \\
	& & -\mathbb{J}\left[X_{\lambda_1+\lambda_2},\sum_{j=1} ^l c_jX_{2\lambda_j} + \sum_{j=1} ^l d_jX^\ast _{2\lambda_j}\right] - \mathbb{J}\left[-\frac{(1+b^2)}{c}X_{\lambda_1-\lambda_2}-bX_{\lambda_1+\lambda_2},X_{2\lambda_2}\right]\\
	& = & -\frac{(1+b^2)}{c}c_2m_{\lambda_1-\lambda_2,2\lambda_2}X_{\lambda_1+\lambda_2}-\frac{(1+b^2)}{c}d_1m_{\lambda_1-\lambda_2,-2\lambda_1}X_{\lambda_1+\lambda_2} ^\ast \\
	& & - bd_1m_{\lambda_1+\lambda_2,-2\lambda_1}X_{\lambda_1-\lambda_2} ^\ast -\mathbb{J}\left( d_1m_{\lambda_1+\lambda_2,-2\lambda_1} X_{\lambda_1-\lambda_2} ^\ast \right) \\
	& & - \mathbb{J}\left( -\frac{(1+b^2)}{c}m_{\lambda_1-\lambda_2,2\lambda_2}X_{\lambda_1+\lambda_2}  \right)\\
	& = & -\frac{(1+b^2)}{c}c_2m_{\lambda_1-\lambda_2,2\lambda_2}X_{\lambda_1+\lambda_2}-\frac{(1+b^2)}{c}d_1m_{\lambda_1-\lambda_2,-2\lambda_1}X_{\lambda_1+\lambda_2} ^\ast \\
	& & -bd_1m_{\lambda_1+\lambda_2,-2\lambda_1}X_{\lambda_1-\lambda_2} ^\ast - d_1m_{\lambda_1+\lambda_2,-2\lambda_1} \left( -bX_{\lambda_1-\lambda_2} ^\ast + \frac{1+b^2}{c}X_{\lambda_1+\lambda_2} ^\ast \right) \\
	& & + \frac{(1+b^2)}{c}m_{\lambda_1-\lambda_2,2\lambda_2} \left( -\frac{(1+b^2)}{c} X_{\lambda_1-\lambda_2} - bX_{\lambda_1+\lambda_2}  \right)\\
	& = & -\frac{(1+b^2)}{c}m_{\lambda_1-\lambda_2,2\lambda_2}(1+b)X_{\lambda_1+\lambda_2} - 2\frac{(1+b^2)}{c}d_1m_{\lambda_1-\lambda_2,-2\lambda_1}X^\ast _{\lambda_1+\lambda_2} \\
	& & - \left( \frac{(1+b^2)}{c}\right)^2 m_{\lambda_1-\lambda_2,2\lambda_2}X_{\lambda_1-\lambda_2} \neq 0,
\end{eqnarray*}
since $\frac{(1+b^2)}{c} \neq 0$ and $m_{\lambda_1-\lambda_2,2\lambda_2}\neq 0$.
\\
\\
{\bf 1.} Now suppose $\mathbb{J}_{[\lambda_1-\lambda_2]}$ of noncomplex type. Thus,
\begin{eqnarray*}
N_{\mathbb{J}}(X_{\lambda_1-\lambda_2},X_{\lambda_1+\lambda_2}) & = & [\mathbb{J}X_{\lambda_1-\lambda_2},\mathbb{J}X_{\lambda_1+\lambda_2}]- [X_{\lambda_1-\lambda_2},X_{\lambda_1+\lambda_2}]-\mathbb{J}[X_{\lambda_1-\lambda_2},\mathbb{J}X_{\lambda_1+\lambda_2}]\\
& & -\mathbb{J}[\mathbb{J}X_{\lambda_1-\lambda_2},X_{\lambda_1+\lambda_2}]\\
& = & [aX_{\lambda_1-\lambda_2}+yX_{\lambda_1+\lambda_2} ^\ast ,aX_{\lambda_1+\lambda_2}-yX_{\lambda_1-\lambda_2} ^\ast] - m_{\lambda_1-\lambda_2,\lambda_1+\lambda_2}X_{2\lambda_1}\\
& & -\mathbb{J}[X_{\lambda_1-\lambda_2},aX_{\lambda_1+\lambda_2}-yX_{\lambda_1-\lambda_2} ^\ast]-\mathbb{J}[aX_{\lambda_1-\lambda_2}+yX_{\lambda_1+\lambda_2} ^\ast, X_{\lambda_1+\lambda_2}]\\
& = & a^2 m_{\lambda_1-\lambda_2 ,\lambda_1+\lambda_2}X_{2\lambda_1} - m_{\lambda_1-\lambda_2 ,\lambda_1+\lambda_2}X_{2\lambda_1}-am_{\lambda_1-\lambda_2,\lambda_1+\lambda_2}\mathbb{J}X_{2\lambda_1}\\
& &  - am_{\lambda_1-\lambda_2,\lambda_1+\lambda_2}\mathbb{J}X_{2\lambda_1}  \\
& = & (a^2 +1)m_{\lambda_1-\lambda_2,\lambda_1+\lambda_2}X_{2\lambda_1} - 2am_{\lambda_1-\lambda_2,\lambda_1+\lambda_2}\mathbb{J}X_{2\lambda_1}.
\end{eqnarray*}

Observe that if $a=0$, then $N_{\mathbb{J}}(X_{\lambda_1-\lambda_2},X_{\lambda_1+\lambda_2}) = m_{\lambda_1-\lambda_2,\lambda1+\lambda_2}X_{2\lambda_1} \neq 0$. Let us now suppose that $a\neq 0$ and write $\mathbb{J}X_{2\lambda_1} = \sum_{j=1} ^l c_jX_{2\lambda_k} + \sum_{j=1} ^l d_jX_{2\lambda_k} ^\ast$. If $N_\mathbb{J} \equiv 0$, then we obtain from the expression obtained above for $N_{\mathbb{J}}(X_{\lambda_1-\lambda_2},X_{\lambda_1+\lambda_2})$ that $c_j = 0$ for $j=2,3,\cdots,l$ and $d_j = 0$ for $j=1,2\cdots,l$. Therefore, we have that $\mathbb{J}X_{2\lambda_1} = c_1X_{2\lambda_1}$ which is a contradiction since $\mathbb{J}^2 = -1$ and $c_1 \in \mathbb{R}$.

So, we conclude:

\begin{proposition}
Let $\mathbb{F}$ be the maximal flag manifold of type $C_l$, with $l\geq 6$ and $l$ even. Then, the invariant generalized almost complex structures on $\mathbb{F}$ are not integrable.
\end{proposition}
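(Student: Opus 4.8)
The plan is to exploit the decomposition $\mathbb{J}=\sum_{[\alpha]_M}\mathbb{J}_{[\alpha]_M}$ together with the two families of $M$-classes listed above. Every block $[\lambda_i-\lambda_s]$ is two-dimensional, so $\mathbb{J}_{[\lambda_i-\lambda_s]}$ is forced into exactly one of the two normal forms $\mathcal{J}^c$ or $\mathcal{J}^{nc}$ of Remark \ref{2DimensionalCases}; the only block escaping this dichotomy is the distinguished class $\{2\lambda_1,\dots,2\lambda_l\}$, which has dimension $l\ge 6$. The key algebraic input is that the long roots arise as sums of short ones: $(\lambda_1-\lambda_2)+(\lambda_1+\lambda_2)=2\lambda_1$ and $(\lambda_1-\lambda_2)+2\lambda_2=\lambda_1+\lambda_2$, so that the structure constants $m_{\lambda_1-\lambda_2,\lambda_1+\lambda_2}$ and $m_{\lambda_1-\lambda_2,2\lambda_2}$ are both nonzero. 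This couples the two-dimensional block $[\lambda_1-\lambda_2]$ to the distinguished block and will be the source of the obstruction.

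First I would split into two cases according to the type of $\mathbb{J}_{[\lambda_1-\lambda_2]}$. If it is of complex type, I would evaluate $N_{\mathbb{J}}(X_{\lambda_1+\lambda_2},X_{2\lambda_2})$ using the bracket rules for a Weyl basis recalled above. Since $M$-invariance forces $\mathbb{J}$ to preserve $V_{[2\lambda_1]_M}\oplus V^\ast_{[2\lambda_1]_M}$, the image $\mathbb{J}X_{2\lambda_2}$ stays inside the distinguished block, and the only terms that can land back in $\mathfrak{g}_{\lambda_1-\lambda_2}\oplus\mathfrak{g}_{\lambda_1+\lambda_2}$ come from the explicitly known action of $\mathbb{J}_{[\lambda_1-\lambda_2]}$; the $X_{\lambda_1-\lambda_2}$-component survives with coefficient a nonzero multiple of $\big((1+b^2)/c\big)^2 m_{\lambda_1-\lambda_2,2\lambda_2}$, which cannot vanish because $c\ne 0$. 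Hence $N_{\mathbb{J}}\not\equiv 0$ in this case.

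If instead $\mathbb{J}_{[\lambda_1-\lambda_2]}$ is of noncomplex type, I would compute $N_{\mathbb{J}}(X_{\lambda_1-\lambda_2},X_{\lambda_1+\lambda_2})$ and reduce it to $(a^2+1)m_{\lambda_1-\lambda_2,\lambda_1+\lambda_2}X_{2\lambda_1}-2a\,m_{\lambda_1-\lambda_2,\lambda_1+\lambda_2}\mathbb{J}X_{2\lambda_1}$. For $a=0$ this is already a nonzero multiple of $X_{2\lambda_1}$. For $a\ne 0$, assuming integrability I would write $\mathbb{J}X_{2\lambda_1}=\sum_j c_jX_{2\lambda_j}+\sum_j d_jX^\ast_{2\lambda_j}$ and read off from $N_{\mathbb{J}}\equiv 0$ that all $c_j$ with $j\ge 2$ and all $d_j$ vanish, forcing $\mathbb{J}X_{2\lambda_1}=c_1X_{2\lambda_1}$ with $c_1\in\mathbb{R}$; this is incompatible with $\mathbb{J}^2=-1$. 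Either way $\mathbb{J}$ fails to be integrable, and since the interacting pair $[\lambda_1-\lambda_2]$, $\{2\lambda_1,\dots,2\lambda_l\}$ is present for every such $C_l$, the conclusion follows.

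The main obstacle I anticipate is precisely that the distinguished block has dimension $l\ge 6$, so the finite case-by-case strategy used for $A_3$, $B_2$, $G_2$ and $D_l$ — where every block is two-dimensional and one can enumerate all combinations of $\mathcal{J}^c$ and $\mathcal{J}^{nc}$ — is unavailable here. The way around it is to never commit to a normal form for $\mathbb{J}$ on the big block: one keeps $\mathbb{J}X_{2\lambda_i}$ as an unknown element of $V_{[2\lambda_1]_M}\oplus V^\ast_{[2\lambda_1]_M}$ and instead detects a forbidden component lying \emph{outside} that block, in $\mathfrak{g}_{\lambda_1-\lambda_2}\oplus\mathfrak{g}_{\lambda_1+\lambda_2}$, which any integrable structure would be compelled to annihilate. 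This localizes the whole argument to the single pair of interacting classes and makes it independent of $l$.
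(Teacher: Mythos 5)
Your proposal is correct and follows essentially the same route as the paper: the paper also splits on the type of $\mathbb{J}_{[\lambda_1-\lambda_2]}$, tests $N_{\mathbb{J}}(X_{\lambda_1+\lambda_2},X_{2\lambda_2})$ in the complex case (obtaining the same surviving coefficient $\left(\tfrac{1+b^2}{c}\right)^2 m_{\lambda_1-\lambda_2,2\lambda_2}\neq 0$) and $N_{\mathbb{J}}(X_{\lambda_1-\lambda_2},X_{\lambda_1+\lambda_2})$ in the noncomplex case, writing $\mathbb{J}X_{2\lambda_j}$ as an undetermined element of the distinguished block and deriving the same contradiction with $\mathbb{J}^2=-1$ when $a\neq 0$. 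Your closing observation about keeping the big block's structure unknown is precisely the mechanism the paper uses.
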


Summing up, we have proved that:

\begin{theorem}\label{NoIntegrables}
No $GM_2$-maximal real flag manifold admits integrable $K$-invariant generalized almost complex structures.
\end{theorem}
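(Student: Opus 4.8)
The plan is to reduce the statement to the finite list of flag types that can actually occur and then to dispatch each of them by a direct computation with the Nijenhuis operator. By the definition of a $GM_2$-maximal real flag together with Theorem \ref{P1} and the explicit enumeration of $M$-equivalence classes in the maximal case recalled at the start of this section, the $GM_2$-maximal real flags are exactly those of type $A_3$, $B_2$, $G_2$, $C_l$ with $l$ even and $l\ge 6$, and $D_l$ with $l\ge 5$; the only maximal flags admitting invariant structures that are excluded are $C_4$ and $D_4$, whose $M$-classes all have dimension $4$. Thus it suffices to establish non-integrability for each of these five families, which is precisely what the string of propositions preceding this theorem accomplishes. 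First I would therefore organize the argument around the decomposition $\mathbb{J}=\bigoplus_{[\alpha]_M}\mathbb{J}_{[\alpha]_M}$ supplied by Lemmas \ref{decomposition} and \ref{isotropic}: since every class appearing here is two-dimensional, Remark \ref{2DimensionalCases} forces each block $\mathbb{J}_{[\alpha]_M}$ to be of complex type $\mathcal{J}^c$ or of noncomplex type $\mathcal{J}^{nc}$, so an invariant structure is specified by a finite table of complex/noncomplex choices (see, e.g., Tables \ref{table2}, \ref{table3}, \ref{table4}).

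The heart of the proof is the trilinear formula of Corollary \ref{usefulformula1}, which says that $\Nij$ is nonzero only on triples of roots of the form $(\alpha,\beta,\alpha+\beta)$ and there equals $\tfrac12 m_{\alpha,\beta}$. The key step for each type is to locate two roots $\alpha,\beta$ lying in \emph{distinct} two-dimensional classes whose sum $\alpha+\beta$ is again a root belonging to a third such class, and then to evaluate $\Nij$ on the explicit $+i$-eigenvectors spanning $L$ listed in the Lemma above. In the mixed cases one such evaluation is already manifestly a single nonzero monomial $\tfrac12\,(\text{const})\,m_{\alpha,\beta}$, which contradicts the integrability criterion $\Nij|_L=0$ of Definition \ref{Def2}. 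In the fully complex or fully noncomplex cases two evaluations survive; setting both to zero produces a ratio such as $c_1/c_2=-m_{\alpha,\beta}/m_{\beta,\gamma}$, while the Jacobi identity applied to the same root triple yields the \emph{same} ratio without the sign, i.e.\ $c_1/c_2=+m_{\alpha,\beta}/m_{\beta,\gamma}$; since all structure constants are nonzero for genuine root triples, this is a contradiction. This is the uniform mechanism behind all the $A_3$, $B_2$, $G_2$, and $D_l$ computations.

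The step I expect to be the main obstacle, and the reason the argument cannot be phrased type-independently, is the combinatorial bookkeeping of which root sums actually land in another $M$-class: the usable crossing triple $(\alpha,\beta,\alpha+\beta)$ must be exhibited by hand in each root system, and the precise pair of surviving Nijenhuis components depends on the complex/noncomplex pattern. The genuinely different case is $C_l$ with $l\ge 6$ even, where one of the classes is the long-root class $\{2\lambda_1,\dots,2\lambda_l\}$ of dimension $l$ rather than $2$. Here the neat trilinear reduction of Corollary \ref{usefulformula1} does not directly apply, and I would instead work with the raw Nijenhuis tensor $N_{\mathbb{J}}(A,B)=[\mathbb{J}A,\mathbb{J}B]-[A,B]-\mathbb{J}[A,\mathbb{J}B]-\mathbb{J}[\mathbb{J}A,B]$ and the Courant-bracket table of Proposition \ref{CaurentB}. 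Testing $N_{\mathbb{J}}$ on the pair coming from $[\lambda_1-\lambda_2]$ against a long root ($X_{2\lambda_2}$ in the complex subcase, $X_{2\lambda_1}$ in the noncomplex one) forces $\mathbb{J}X_{2\lambda_1}$ to be a real multiple of $X_{2\lambda_1}$ with all other components vanishing, contradicting $\mathbb{J}^2=-1$. Once all five families are settled, the theorem follows immediately by invoking the corresponding proposition in each case.
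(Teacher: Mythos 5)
Your proposal follows the paper's proof essentially verbatim: the same reduction to the five families $A_3$, $B_2$, $G_2$, $C_l$ ($l\geq 6$ even), $D_l$ ($l\geq 5$), the same block decomposition into complex/noncomplex choices via Remark \ref{2DimensionalCases}, the same mechanism based on Corollary \ref{usefulformula1} (a single nonzero monomial in the mixed cases, a Jacobi-identity contradiction on ratios of structure constants in the pure cases), and the same raw-Nijenhuis-tensor treatment of the long-root class in type $C_l$. The only discrepancy is cosmetic: in the paper's $C_l$ complex subcase the evaluation $N_{\mathbb{J}}(X_{\lambda_1+\lambda_2},X_{2\lambda_2})$ is nonzero outright, whereas the argument you describe --- forcing $\mathbb{J}X_{2\lambda_1}$ to be a real multiple of $X_{2\lambda_1}$ and contradicting $\mathbb{J}^2=-1$ --- is what the paper uses in the noncomplex subcase.
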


\section{Generalized geometry on the special cases $B_2$, $G_2$, $A_3$, and $D_l$ with $l\geq 5$}\label{S:5}
Motivated by the study of several geometric aspects involving invariant generalized complex structures on complex flag manifolds developed in \cite{GVV}, in this section we give a concrete description of the generalized geometry on those maximal real flag manifolds of type $B_2$, $G_2$, $A_3$, and $D_l$ with $l\geq 5$. What makes these cases special among all maximal real flag manifolds is the fact that $\dim V_{[\alpha]_M}=2$ and because of this, for a fixed root $\alpha\in \Pi^{-}$, a generalized complex structure $\mathbb{J}_{[\alpha]_M}$ on $V_{[\alpha]_M}$ is given by either the structure $\mathcal{J}^c_{[\alpha]_M}$ or $\mathcal{J}^{nc}_{[\alpha]_M}$ introduced in Remark \ref{2DimensionalCases}. In the sequel $\mathbb{F}$ will denote any of the maximal real flag manifolds determined by the special cases mentioned above. Consider the set $\mathcal M_a(\mathbb{F})$ of all invariant generalized almost complex structures on $\mathbb F$. For a fixed root $\alpha \in \Pi^-$,
let $\mathcal M_\alpha(\mathbb{F})$ be the  restriction of $\mathcal M_a(\mathbb{F})$ to the subspace $V_{[\alpha]_M}$. Thus, the following result is clear:

\begin{lemma} \label{Aalpha}
	$\mathcal M_\alpha(\mathbb{F})$ is a disjoint union of:
	\begin{enumerate}
		\item[$\iota$.] a 2-dimensional family of structures of noncomplex type parametrized by the real algebraic surface
		cut out by $a_\alpha^2-x_\alpha y_\alpha =-1$ in $\mathbb R^3$, and 
		\item[$\iota\iota$.]  a 2-dimensional family of structures of complex type. 
	\end{enumerate}
\end{lemma}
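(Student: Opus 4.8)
The plan is to obtain the statement almost directly from Remark \ref{2DimensionalCases}, treating it as a classification result and reducing the lemma to a parametrization count together with a disjointness check. First I would recall that an element of $\mathcal{M}_\alpha(\mathbb{F})$ is exactly an orthogonal complex structure on the four-dimensional space $V_{[\alpha]_M}\oplus V^\ast_{[\alpha]_M}$ with respect to the Cartan--Killing form $Q$; since $\dim V_{[\alpha]_M}=2$ and, as observed in the proof of Theorem \ref{P1}, $(\textnormal{Ad}\oplus\textnormal{Ad}^\ast)(m)=\pm 1$ on this subspace, every such orthogonal complex structure is automatically $M$-invariant. Remark \ref{2DimensionalCases} then asserts that, after fixing the basis $(X_\alpha,X_\beta,X^\ast_\alpha,X^\ast_\beta)$, each of these structures coincides with exactly one of the normal forms $\mathcal{J}^c_{[\alpha]_M}$ or $\mathcal{J}^{nc}_{[\alpha]_M}$. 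Hence $\mathcal{M}_\alpha(\mathbb{F})$ is the union of the set of structures of the form $\mathcal{J}^c_{[\alpha]_M}$ and those of the form $\mathcal{J}^{nc}_{[\alpha]_M}$, and the whole lemma reduces to identifying the parameter spaces of these two sets and verifying that they do not overlap.

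For the noncomplex part I would note that the assignment $(a_\alpha,x_\alpha,y_\alpha)\mapsto \mathcal{J}^{nc}_{[\alpha]_M}$ is injective, since the three parameters occur as distinct matrix entries, and that its image is precisely the set of noncomplex-type structures. Its domain is the zero locus of $f(a,x,y)=a^2-xy+1$ in $\mathbb{R}^3$, that is, the surface cut out by $a_\alpha^2-x_\alpha y_\alpha=-1$. Because $\nabla f=(2a,-y,-x)$ vanishes only at the origin, which does not satisfy $f=0$, this locus is a smooth real algebraic surface and hence a two-dimensional family. I would also record the elementary observation that on this surface one has $x_\alpha y_\alpha=a_\alpha^2+1\geq 1$, so that $x_\alpha\neq 0$ and $y_\alpha\neq 0$; this forces both off-diagonal blocks $\mathcal{X}_\alpha$ and $\mathcal{Y}_\alpha$ of $\mathcal{J}^{nc}_{[\alpha]_M}$ to be nonzero.

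For the complex part, the map $(b_\alpha,c_\alpha)\mapsto \mathcal{J}^c_{[\alpha]_M}$ is again injective with image the complex-type structures, and its domain is $\mathbb{R}\times\mathbb{R}^\ast$ (the only constraint being $c_\alpha\neq 0$), a two-dimensional family. Disjointness is then immediate from the previous observation: every $\mathcal{J}^{nc}_{[\alpha]_M}$ has nonzero off-diagonal blocks, whereas by construction every $\mathcal{J}^c_{[\alpha]_M}=\left(\begin{smallmatrix}-J^c & 0\\ 0 & (J^c)^\ast\end{smallmatrix}\right)$ has vanishing off-diagonal blocks, so no structure lies in both families; equivalently, the type invariant separates them, being $0$ on the symplectic-like noncomplex structures and $1$ on the complex-type ones. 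The only point that genuinely requires input beyond this bookkeeping is the \emph{exhaustiveness} of the two normal forms --- that no orthogonal complex structure on $V_{[\alpha]_M}\oplus V^\ast_{[\alpha]_M}$ escapes the list --- which is exactly what Remark \ref{2DimensionalCases} imports from \cite{VS}; I expect this to be the main obstacle, while the injectivity, smoothness, and disjointness checks above are routine.
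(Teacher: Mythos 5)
Your proposal is correct and follows essentially the same route as the paper, which states Lemma \ref{Aalpha} as an immediate consequence of the normal forms $\mathcal{J}^c_{[\alpha]_M}$ and $\mathcal{J}^{nc}_{[\alpha]_M}$ in Remark \ref{2DimensionalCases} (itself imported from \cite{VS}), together with the observation from the proof of Theorem \ref{P1} that any orthogonal complex structure on $V_{[\alpha]_M}\oplus V^\ast_{[\alpha]_M}$ is automatically $M$-invariant. Your additional checks (injectivity of the parametrizations, smoothness of the surface $a_\alpha^2-x_\alpha y_\alpha=-1$, and disjointness via the vanishing or nonvanishing of the off-diagonal blocks) are routine details that the paper leaves implicit by declaring the result ``clear.''
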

It is worth mentioning that this clearly differs from what was known for maximal complex flags where there were only 2 isolated structures of complex type inside $\mathcal M_\alpha(\mathbb{F})$; compare \cite{GVV}.
\subsection{Effects of the action by invariant $B$-transformations} 
Let $\alpha \in \Pi^-$ be a fixed root. First of all, let us look at the effects of the action by $B$-transformations on the generalized complex structures on $V_{[\alpha]_M}$ which are of noncomplex type. Similar to how it was done in \cite{GVV}, if $\mathbb{J}_{[\alpha]_M}=\mathcal{J}^{nc}_{[\alpha]_M}$ is of noncomplex type with $a=0$ then because the equation $xy=1$ we get a structure of symplectic type since
\begin{equation}\label{ECType1}
\mathbb{J}_{[\alpha]_M}=\left( 
\begin{array}{cc}
0 & -\omega_\alpha^{-1}\\
\omega_\alpha & 0
\end{array}%
\right),
\end{equation}
where $\omega_\alpha=\dfrac{1}{x}\left(\begin{array}{cc}
0 & -1\\
1 & 0
\end{array}%
\right)\approx \dfrac{1}{x}X_\alpha^\ast\wedge X_\beta^\ast$, with $\alpha\sim_M \beta$, defines a symplectic form on $V_{[\alpha]_M}$. The structure showed in \eqref{ECType1} will be denoted by $\mathbb{J}_{\omega_\alpha}$. In general, if $\mathbb{J}_{[\alpha]_M}=\mathcal{J}^{nc}_{[\alpha]_M}$ is of noncomplex type verifying the equation $a^2=x y-1$ and $\mathbb{J}_{\omega_\alpha}$ is the structure of symplectic type defined as in \eqref{ECType1} using the nonzero parameter $x$, then it is simple to check that the equation $a^2=x y-1$ implies
\begin{equation}\label{AllSymplectic}
e^{-B_\alpha}\mathbb{J}_{\omega_\alpha}e^{B_\alpha}=\left( 
\begin{array}{cc}
-\omega_\alpha^{-1}B_\alpha & -\omega_\alpha^{-1}\\
\omega_\alpha+B_\alpha\omega_\alpha^{-1}B_\alpha& B_\alpha\omega_\alpha^{-1}
\end{array}%
\right)=\left( 
\begin{array}{cccc}
a & 0 & 0 & -x\\ 
0 &  a & x & 0\\
0 & -y & -a & 0\\
y & 0 & 0 & -a
\end{array}%
\right)=\mathcal{J}^{nc}_{[\alpha]_M},
\end{equation}
where $B_\alpha=\dfrac{a}{x} X_\alpha^\ast\wedge X_\beta^\ast$ with $\beta$ the other one root such that $\alpha\sim_M \beta$. In particular, $\textnormal{Type}(\mathbb{J}_{[\alpha]_M})=0$ and we have:
\begin{lemma}\label{LemmaBsymplectic}
Suppose that both $\mathbb{J}_{[\alpha]_M}$ and $\mathbb{J}_{[\alpha]_M}'$ are of noncomplex type with
$$\mathcal{J}^{nc}_{[\alpha]_M}={\tiny\left( 
	\begin{array}{cccc}
	a & 0 & 0 & -x\\ 
	0 &  a & x & 0\\
	0 & -y & -a & 0\\
	y & 0 & 0 & -a
	\end{array}%
	\right)} \quad \textnormal{and}\quad \mathcal{J'}^{nc}_{[\alpha]_M}={\tiny\left( 
	\begin{array}{cccc}
	a' & 0 & 0 & -x\\ 
	0 &  a' & x & 0\\
	0 & -y' & -a' & 0\\
	y' & 0 & 0 & -a'
	\end{array}%
	\right)}.$$
Then, there exists a $B$-transformation $B_\alpha\in \bigwedge^2 V_{[\alpha]_M}^\ast$ such that
$$e^{-B_\alpha}\mathbb{J}_\alpha e^{B_\alpha} =\mathbb{J}_\alpha'.$$
\end{lemma}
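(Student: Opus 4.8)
The plan is to factor both structures through the single symplectic-type structure $\mathbb{J}_{\omega_\alpha}$ built from the parameter $x$ that the two matrices share, and then to compose the two resulting $B$-transformations using the abelian group structure of $\mathcal{B}$. The crucial observation is that both $\mathcal{J}^{nc}_{[\alpha]_M}$ and $\mathcal{J'}^{nc}_{[\alpha]_M}$ are written with the \emph{same} off-diagonal entry $x$; they differ only in $a$ (hence in $y$).

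First I would note that $x\neq 0$ for every structure of noncomplex type: the defining relation $a^2=xy-1$ forces $xy=a^2+1>0$, so neither $x$ nor $y$ can vanish. In particular $\mathbb{J}_{[\alpha]_M}$ and $\mathbb{J}'_{[\alpha]_M}$ share the same nonzero $x$, and therefore determine the very same symplectic form $\omega_\alpha=\frac{1}{x}X_\alpha^\ast\wedge X_\beta^\ast$ and the same structure $\mathbb{J}_{\omega_\alpha}$ of \eqref{ECType1}. Moreover the constraints $y=(a^2+1)/x$ and $y'=((a')^2+1)/x$ are exactly the $y,y'$ appearing in the two matrices, so no compatibility issue arises when we invoke the previous computation.

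Next I would apply \eqref{AllSymplectic} twice to this fixed $\mathbb{J}_{\omega_\alpha}$, obtaining $\mathbb{J}_{[\alpha]_M}=e^{-B_1}\mathbb{J}_{\omega_\alpha}e^{B_1}$ with $B_1=\frac{a}{x}X_\alpha^\ast\wedge X_\beta^\ast$ and $\mathbb{J}'_{[\alpha]_M}=e^{-B_2}\mathbb{J}_{\omega_\alpha}e^{B_2}$ with $B_2=\frac{a'}{x}X_\alpha^\ast\wedge X_\beta^\ast$. Solving the first identity for $\mathbb{J}_{\omega_\alpha}=e^{B_1}\mathbb{J}_{[\alpha]_M}e^{-B_1}$ and substituting into the second, then using that $\mathcal{B}$ is abelian (so $e^{-B_2}e^{B_1}=e^{-(B_2-B_1)}$), yields $\mathbb{J}'_{[\alpha]_M}=e^{-B_\alpha}\mathbb{J}_{[\alpha]_M}e^{B_\alpha}$ with $B_\alpha=B_2-B_1=\frac{a'-a}{x}X_\alpha^\ast\wedge X_\beta^\ast\in\bigwedge^2 V_{[\alpha]_M}^\ast$, which is the desired transformation.

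I do not expect a serious obstacle: the statement is essentially a corollary of \eqref{AllSymplectic}. The only point deserving a moment's care is the direction of conjugation when composing the two transformations through $\mathbb{J}_{\omega_\alpha}$, which is settled cleanly by the commutativity $e^{B_1}e^{B_2}=e^{B_1+B_2}$ on $\mathcal{B}$. As an independent check one could instead verify $e^{-B_\alpha}\mathbb{J}_{[\alpha]_M}e^{B_\alpha}=\mathbb{J}'_{[\alpha]_M}$ by a direct $4\times 4$ matrix computation with $B_\alpha=\frac{a'-a}{x}X_\alpha^\ast\wedge X_\beta^\ast$, using $a^2=xy-1$ and $(a')^2=xy'-1$ to match the $y,y'$ entries.
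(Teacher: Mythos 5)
Your proof is correct and follows essentially the same route as the paper's: both factor $\mathbb{J}_{[\alpha]_M}$ and $\mathbb{J}'_{[\alpha]_M}$ through the common symplectic-type structure $\mathbb{J}_{\omega_\alpha}$ (well defined because the two matrices share the same $x$), apply Equation \eqref{AllSymplectic} twice, and combine the two $B$-fields via the abelian group law $e^{\hat{B}+B'}=e^{\hat{B}}\cdot e^{B'}$ to get $B_\alpha$ as a difference. Your version is in fact slightly more explicit than the paper's, since you record the closed formula $B_\alpha=\frac{a'-a}{x}X_\alpha^\ast\wedge X_\beta^\ast$ and check the nonvanishing of $x$.
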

\begin{proof}
It is clear that $\mathbb{J}_{\omega_\alpha}=\mathbb{J}_{\omega_\alpha}'$. Thus, as consequence of the arguments given above there exist $\hat{B}_\alpha$ and $B_\alpha'$ in $\bigwedge^2 V_{[\alpha]_M}^\ast$ such that
$$e^{\hat{B}_\alpha}\mathbb{J}_\alpha e^{-\hat{B}_\alpha}=e^{B_\alpha'}\mathbb{J}_\alpha' e^{-B_\alpha'}.$$
Therefore, the identity $e^{\hat{B}+B'}=e^{\hat{B}}\cdot e^{B'}$ allows us to ensure that the result follows for $B_\alpha=B_\alpha'-\hat{B}_\alpha$.
\end{proof}

Let us now suppose that $\mathbb{J}_{[\alpha]_M}=\mathcal{J}^{c}_{[\alpha]_M}$ is of complex type. Here we have that $\mathcal{J}^{c}_{[\alpha]_M}=\left( 
\begin{array}{cc}
-J^c & 0\\
0 & (J^c)^\ast
\end{array}%
\right)$ where $J^c=\left( 
\begin{array}{cc}
-b & \dfrac{1+b^2}{c}\\
-c & b
\end{array}%
\right)$ with $c\neq 0$ is a complex structure on $V_{[\alpha]_M}$. In this case, a straightforward computation allows us to get that for every $B$-transformation $B\in \bigwedge^2 V_{[\alpha]_M}^\ast$ we obtain
$$e^{-B}\mathcal{J}^{c}_{[\alpha]_M}e^B=\left( 
\begin{array}{cc}
-J^c & 0\\
BJ^c+(J^c)^\ast B& (J^c)^\ast
\end{array}%
\right)=\mathcal{J}^{c}_{[\alpha]_M}.$$
This is because $BJ^c+(J^c)^\ast B=0$ for every $B=rX_\alpha^\ast\wedge X_\beta^\ast$ with $r\in\mathbb{R}$ and $\alpha \sim_M \beta$.
\begin{lemma}\label{LemmaBcomplex}
If $\mathbb{J}_{[\alpha]_M}$ is of complex type, then for every $B$-transformation $B\in \bigwedge^2 V_{[\alpha]_M}^\ast$ we have
$$e^{-B}\mathbb{J}_{[\alpha]_M} e^{B} =\mathbb{J}_{[\alpha]_M}.$$
\end{lemma}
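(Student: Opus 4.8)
The plan is to read the conclusion off the action of $e^{-B}(\cdot)e^{B}$ on a complex-type structure, which was already recorded just above the statement (and in Example \ref{ExampleC}): for any $2$-form $B$ one has
$$e^{-B}\mathcal{J}^{c}_{[\alpha]_M}e^{B}=\left(\begin{array}{cc} -J^c & 0\\ BJ^c+(J^c)^\ast B & (J^c)^\ast \end{array}\right).$$
Hence proving the lemma reduces to showing that the lower-left block $BJ^c+(J^c)^\ast B$ vanishes for every admissible invariant $B$, since the remaining blocks already reproduce $\mathcal{J}^{c}_{[\alpha]_M}$.

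First I would record that because $\dim V_{[\alpha]_M}=2$, the space $\bigwedge^2 V_{[\alpha]_M}^\ast$ of invariant $B$-transformations is one-dimensional, spanned by $X_\alpha^\ast\wedge X_\beta^\ast$ where $\alpha\sim_M\beta$ are the two roots of the class. So it suffices to treat $B=r\,X_\alpha^\ast\wedge X_\beta^\ast$ for a single real parameter $r$; relative to the Weyl basis $(X_\alpha,X_\beta)$ this is the skew matrix $r J_0$ with $J_0=\left(\begin{smallmatrix}0&1\\-1&0\end{smallmatrix}\right)$, while $(J^c)^\ast$ is the transpose $(J^c)^T$ under the identification furnished by the Cartan--Killing form.

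The key step is then the algebraic identity $BJ^c+(J^c)^\ast B=0$. Rather than multiply the explicit matrices, I would argue conceptually: for any $2\times2$ real matrix $A$, Cayley--Hamilton gives $A+(\det A)A^{-1}=(\operatorname{tr}A)I$, and combined with the standard relation $A^T J_0 A=(\det A)J_0$ this yields $J_0 A+A^T J_0=(\operatorname{tr}A)J_0$. Since $J^c$ squares to $-I$ it has eigenvalues $\pm i$, so $\operatorname{tr}(J^c)=0$, and therefore $J_0 J^c+(J^c)^T J_0=0$. Multiplying by $r$ gives $BJ^c+(J^c)^\ast B=0$, so the off-diagonal block vanishes and $e^{-B}\mathbb{J}_{[\alpha]_M}e^{B}=\mathbb{J}_{[\alpha]_M}$, as claimed.

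I do not expect a genuine obstacle here, as this is essentially a direct verification; the only points deserving care are the one-dimensionality of the admissible $B$'s, which is exactly what confines $B$ to a scalar multiple of the symplectic form $J_0$ and makes the cancellation possible, and the correct identification of $(J^c)^\ast$ with the transpose in the chosen basis. It is worth noting that the tracelessness of $J^c$ is the structural reason behind the rigidity, and it contrasts cleanly with the noncomplex case treated in Lemma \ref{LemmaBsymplectic}, where $B$-transformations act transitively among the symplectic-type representatives.
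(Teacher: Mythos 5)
Your proposal is correct and takes essentially the same approach as the paper: both reduce the lemma to the block formula for $e^{-B}\mathcal{J}^{c}_{[\alpha]_M}e^{B}$ and to the vanishing of the lower-left block $BJ^c+(J^c)^\ast B$ for $B=r\,X_\alpha^\ast\wedge X_\beta^\ast$. The only difference is how that identity is verified: the paper checks it by a direct matrix computation, whereas you deduce it from the general $2\times 2$ identity $J_0A+A^TJ_0=(\operatorname{tr}A)\,J_0$ (with $J_0$ the standard skew matrix) together with $\operatorname{tr}(J^c)=0$, which is a slightly more conceptual way to see the same cancellation.
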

In other words, the action by $B$-transformations leaves fixed every element in the 2-dimensional family of generalized complex structures of complex type on $V_{[\alpha]_M}$. Let  $\displaystyle \mathfrak{M}_\alpha(\mathbb{F})=\mathcal M_\alpha(\mathbb{F})/\mathcal{B}$ denote the moduli space of generalized complex structures on $V_{[\alpha]_M}$ under $B$-transformations. Thus, we obtain:
\begin{proposition}\label{Bmoduli1}
The quotient space $\displaystyle \mathfrak{M}_\alpha(\mathbb{F})$ consists of 2 disjoint sets:
\begin{enumerate}
\item[$\iota$.] a punctured real line $\mathbb{R}^\ast$ parametrizing structures of symplectic type, and 
\item[$\iota\iota$.] a plane minus a line $\mathbb{R}^\ast\times \mathbb{R}$ parametrizing structures of complex type.
\end{enumerate}
That is
$$\displaystyle \mathfrak{M}_\alpha(\mathbb{F})=\mathbb{R}^\ast \cup (\mathbb{R}^\ast\times \mathbb{R}).$$
In particular, $\mathfrak{M}_\alpha(\mathbb{F})$ admits a natural topology with which it is homotopy equivalent to $S^1$.
\end{proposition}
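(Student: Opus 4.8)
The plan is to compute the $\mathcal B$-orbits on the two strata of $\mathcal M_\alpha(\mathbb F)$ furnished by Lemma \ref{Aalpha}, to read off the orbit space as a set, and then to topologize the result by embedding it into $\mathbb R^2$. First I would treat the noncomplex stratum. Equation \eqref{AllSymplectic} together with Lemma \ref{LemmaBsymplectic} shows that every structure $\mathcal J^{nc}_{[\alpha]_M}$ with $a^2=xy-1$ is a $B$-transform of the symplectic structure $\mathbb J_{\omega_\alpha}$ determined by its parameter $x$; hence each $\mathcal B$-orbit in this stratum contains a representative of symplectic type. To check that distinct $x$ give distinct orbits, I would use that under $\mathbb J\mapsto e^{-B}\mathbb J e^{B}$ the upper-right block of $\mathbb J$ is unchanged — this is already visible in Example \ref{ExampleS} and in \eqref{AllSymplectic}, where that block equals $-\omega_\alpha^{-1}=\left(\begin{smallmatrix}0&-x\\ x&0\end{smallmatrix}\right)$ and thus records $x$. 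Consequently the noncomplex $\mathcal B$-orbits are parametrized faithfully by $x\in\mathbb R^\ast$, giving the punctured line of part $\iota$.

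For the complex stratum, Lemma \ref{LemmaBcomplex} says that $\mathcal B$ fixes every structure of complex type, so here each $\mathcal J^{c}_{[\alpha]_M}$ is already a single orbit and the orbit space is the entire two-parameter family $\{(b,c):c\neq 0\}\cong\mathbb R^\ast\times\mathbb R$ of part $\iota\iota$. Since the upper-right block of $\mathbb J$ vanishes exactly on the complex stratum and is nonzero on the noncomplex one, these two orbit spaces are disjoint, and I obtain $\mathfrak M_\alpha(\mathbb F)=\mathbb R^\ast\cup(\mathbb R^\ast\times\mathbb R)$ as sets.

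It remains to install the topology. I would define the embedding $\mathfrak M_\alpha(\mathbb F)\hookrightarrow\mathbb R^2$ sending a complex-type class with parameters $(b,c)$ to $(c,b)$ and a symplectic-type class with parameter $x$ to $(0,x)$. Its image is $\{c\neq 0\}\cup\{(0,x):x\neq 0\}=\mathbb R^2\setminus\{(0,0)\}$, and the natural topology is the one pulled back along this bijection. The homotopy equivalence then follows from the standard deformation retraction $p\mapsto p/\lvert p\rvert$ of $\mathbb R^2\setminus\{0\}$ onto $S^1$.

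The main point — and the only place where something genuinely must be argued — is this last topological gluing. The raw quotient topology on $\mathcal M_\alpha(\mathbb F)/\mathcal B$ would keep the complex and symplectic strata apart (the space $\mathcal M_\alpha(\mathbb F)$ is itself disconnected, the complex-type structures forming full-dimensional components separate from the noncomplex ones), producing four contractible pieces rather than a circle. The content of the statement is precisely that the natural coordinates above weld the half-planes $\{c\neq 0\}$ of complex type to the deleted axis $\{x\neq 0\}$ of symplectic type across the missing origin; I would therefore emphasize that the claimed topology is the one induced by this embedding into $\mathbb R^2$, so that $\mathfrak M_\alpha(\mathbb F)\cong\mathbb R^2\setminus\{0\}\simeq S^1$.
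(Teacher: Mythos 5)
Your proposal is correct and follows essentially the same route as the paper's proof: Lemma \ref{LemmaBcomplex} disposes of the complex stratum, equation \eqref{AllSymplectic} together with Lemma \ref{LemmaBsymplectic} collapses the noncomplex stratum onto symplectic representatives parametrized by $x\in\mathbb{R}^\ast$, and the topology is installed by identifying $\mathbb{R}^\ast\cup(\mathbb{R}^\ast\times\mathbb{R})$ with $\mathbb{R}^2\setminus\{(0,0)\}$ inside $\mathbb{R}^2$. Your two refinements---separating orbits with distinct $x$ via the $B$-invariance of the upper-right block (the paper instead cites Lemma \ref{LemmaBsymplectic}, which strictly speaking only gives equivalence for equal $x$), and stating explicitly that the ``natural topology'' is the one induced by this embedding rather than the raw quotient topology---are sharper than the corresponding steps in the paper, but the argument is the same.
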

\begin{proof}
On the one hand, by Lemma \ref{LemmaBcomplex} we have that structures in $\mathcal M_\alpha(\mathbb{F})$ which are of complex type are fixed points of the action by $B$-transformations. This implies that up to action by $B$-transformations the whole 2-dimensional family of structures of complex type on $V_{[\alpha]_M}$ mentioned in Lemma \ref{Aalpha} is parametrized by the plane minus a line $\mathbb{R}^\ast\times \mathbb{R}$ holding the values of $(c,b)$. On the other hand, by Lemma \ref{Aalpha}, structures in $\mathcal M_\alpha(\mathbb{F})$ which are of noncomplex type are parameterized by a real surface $a^2 -x y=-1$ in $\mathbb{R}^3$, and from equation \eqref{AllSymplectic} we conclude that every point on this surface is the image by a $B$-transformation of a generalized complex structure of symplectic type determined by $x$. Hence the quotient of this real surface by $B$-transformations reduces to a punctured line $\mathbb{R}^\ast$ holding the values of $x$. If $x \neq x'$ are distinct then we get different generalized complex structures of symplectic type. Therefore, from Lemma \ref{LemmaBsymplectic} we conclude that points on this real punctured line represent non equivalent classes.

Finally, if we consider both sets $\mathbb{R}^\ast\approx \{0\}\times \mathbb{R}^\ast$ and $\mathbb{R}^\ast\times \mathbb{R}$ inside $\mathbb{R}^2$ with the subspace topology, then we may induce a natural topology to $\mathfrak{M}_\alpha(\mathbb{F})=\mathbb{R}^\ast \cup (\mathbb{R}^\ast\times \mathbb{R})$ which comes from the natural topology of $\mathbb{R}^2$. Thus, if we take the inclusions $\iota_1: \{0\}\times \mathbb{R}^\ast \hookrightarrow \mathbb{R}^2\backslash\{(0,0)\}$ and $\iota_2: \mathbb{R}^\ast\times \mathbb{R} \hookrightarrow \mathbb{R}^2\backslash\{(0,0)\}$, then using the pasting lemma we get a continuous function $f:\{0\}\times \mathbb{R}^\ast\cup(\mathbb{R}^\ast\times \mathbb{R})\to \mathbb{R}^2\backslash\{(0,0)\}$ which is actually a homeomorphism. In consequence, we may identify the quotient space $\mathfrak{M}_\alpha(\mathbb{F})\approx \mathbb{R}^2\backslash\{(0,0)\}$ and hence $\mathfrak{M}_\alpha(\mathbb{F})$ is homotopy equivalent to $S^1$.
\end{proof}

The proof of the next result follows the same steps used to prove \cite[Theorem 5.13]{GVV}. For this reason we will only give a few details about it. This result is essential for two reasons. The first one is because this allows us to describe the moduli space $\mathfrak{M}_a(\mathbb{F})$ for the maximal real flag manifolds $\mathbb{F}$ that we are dealing with in this section. The second one is because as consequence of this result and Lemmas \ref{LemmaBsymplectic} and \ref{LemmaBcomplex} we may give an explicit expression for the invariant pure spinor associated to each element in $\mathcal{M}_a(\mathbb{F})$.
\begin{proposition}\label{BModuli2}
Let $\mathbb{J}$ and $\mathbb{J}'$ be two invariant generalized almost complex structures on $\mathbb{F}$ such that for each $M$-equivalence class $[\alpha]_M$ the following conditions hold true:
\begin{enumerate}
\item[$\iota$.] if $\mathbb{J}_{[\alpha]_M}$ is of complex type, then $\mathbb{J}_{[\alpha]_M}=\mathbb{J}_{[\alpha]_M}'$, and
\item[$\iota\iota$.] if $\mathbb{J}_{[\alpha]_M}$ is of noncomplex type, then $\mathbb{J}_{[\alpha]_M}'$ is also of noncomplex type with
$$\mathcal{J}_{[\alpha]_M}^{nc}=\left( 
\begin{array}{cc}
\mathcal{A}_{\alpha} & \mathcal{X}_{\alpha} \\
\mathcal{Y}_{\alpha}  & -\mathcal{A}_{\alpha}
\end{array}%
\right) \quad \textnormal{and}\quad \mathcal{J'}_{[\alpha]_M}^{cn}=\left( 
\begin{array}{cc}
\mathcal{A}_{\alpha}' & \mathcal{X}_{\alpha} \\
\mathcal{Y}_{\alpha}'  & -\mathcal{A}_{\alpha}'
\end{array}%
\right).$$
\end{enumerate}
Then there exists a $B$-transformation $B\in \wedge^2 (\mathfrak{n}^-)^\ast$ such that
$$e^{-B}\mathbb{J}e^{B}=\mathbb{J}'.$$
\end{proposition}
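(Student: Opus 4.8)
The plan is to reduce the global statement to the blockwise results already obtained in Lemmas \ref{LemmaBsymplectic} and \ref{LemmaBcomplex}, and then reassemble a single global $B$-transformation out of the blockwise ones, exactly as in the proof of \cite[Theorem 5.13]{GVV}.

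First I would recall that at the origin the generalized tangent space splits $M$-invariantly as the orthogonal direct sum $\mathfrak{n}^-\oplus\mathfrak{n}^+=\bigoplus_{[\alpha]_M}(V_{[\alpha]_M}\oplus V^\ast_{[\alpha]_M})$, and that by Lemmas \ref{decomposition} and \ref{isotropic} every invariant generalized almost complex structure is block diagonal with respect to this splitting, so that $\mathbb{J}=\sum_{[\alpha]_M}\mathbb{J}_{[\alpha]_M}$ and $\mathbb{J}'=\sum_{[\alpha]_M}\mathbb{J}'_{[\alpha]_M}$.

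Next I would construct the local $B$-transformations class by class. For each $M$-class on which $\mathbb{J}_{[\alpha]_M}$ is of noncomplex type, hypothesis $\iota\iota.$ guarantees that $\mathbb{J}'_{[\alpha]_M}$ is of noncomplex type sharing the same off-diagonal block $\mathcal{X}_\alpha$, so Lemma \ref{LemmaBsymplectic} produces a $2$-form $B_\alpha\in\bigwedge^2 V^\ast_{[\alpha]_M}$ with $e^{-B_\alpha}\mathbb{J}_{[\alpha]_M}e^{B_\alpha}=\mathbb{J}'_{[\alpha]_M}$. For each class on which $\mathbb{J}_{[\alpha]_M}$ is of complex type I would set $B_\alpha=0$; then hypothesis $\iota.$ gives $e^{-B_\alpha}\mathbb{J}_{[\alpha]_M}e^{B_\alpha}=\mathbb{J}_{[\alpha]_M}=\mathbb{J}'_{[\alpha]_M}$ (and, by Lemma \ref{LemmaBcomplex}, any other choice of $B_\alpha$ would leave the complex-type block fixed as well).

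Finally I would assemble $B=\sum_{[\alpha]_M}B_\alpha\in\bigwedge^2(\mathfrak{n}^-)^\ast$ and verify that this single $B$-transformation conjugates $\mathbb{J}$ into $\mathbb{J}'$. The key observation, and the only step needing genuine care, is that $e^{\pm B}$ respects the block decomposition: since each $B_\alpha$ involves only the covectors $X^\ast_\beta$ with $\beta\sim_M\alpha$, one has $i_{X_\gamma}B=i_{X_\gamma}B_{[\gamma]_M}$ for every root vector $X_\gamma$, so that $e^{\pm B}$ is block diagonal and acts on $V_{[\alpha]_M}\oplus V^\ast_{[\alpha]_M}$ exactly as $e^{\pm B_\alpha}$. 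This is precisely where the orthogonality of the $M$-class decomposition (Lemma \ref{isotropic}) and the support of each $B_\alpha$ on a single block enter, ensuring that no cross terms between distinct $M$-classes arise. Granting this,
$$e^{-B}\mathbb{J}e^B=\sum_{[\alpha]_M}e^{-B_\alpha}\mathbb{J}_{[\alpha]_M}e^{B_\alpha}=\sum_{[\alpha]_M}\mathbb{J}'_{[\alpha]_M}=\mathbb{J}',$$
which is the desired conclusion.
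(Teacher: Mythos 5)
Your proposal is correct and follows essentially the same route as the paper's proof: both decompose $\mathbb{J}$ and $\mathbb{J}'$ into their $M$-class blocks, invoke Lemmas \ref{LemmaBsymplectic} and \ref{LemmaBcomplex} to produce blockwise $B$-transformations $B_\alpha$ (the paper's choice on complex-type blocks is equally trivial, since by Lemma \ref{LemmaBcomplex} any $B_\alpha$, in particular $B_\alpha=0$, fixes such blocks), and then assemble the block-diagonal $B=\sum_\alpha B_\alpha$ to conjugate $\mathbb{J}$ into $\mathbb{J}'$. Your explicit check that $e^{\pm B}$ preserves the block decomposition, via $i_{X_\gamma}B=i_{X_\gamma}B_{[\gamma]_M}$, is a welcome point of extra care that the paper leaves implicit when it simply writes $B$ and $\mathbb{J}$ as block matrices.
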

\begin{proof}
Suppose that $\displaystyle \Pi^-=\bigcup_{j=1}^d[\alpha_j]_M$ where $d$ is the number of $M$-equivalence classes. For each $M$-equivalence class $[\alpha_j]_M$ we set 
$$\mathbb{J}_{[\alpha_j]_M}=\left( 
\begin{array}{cc}
\mathcal{A}_{\alpha_j} & \mathcal{X}_{\alpha_j} \\
\mathcal{Y}_{\alpha_j}  & -\mathcal{A}_{\alpha_j}
\end{array}%
\right)  \quad \textnormal{and}\quad  \mathbb{J}_{[\alpha_j]_M}'=\left( 
\begin{array}{cc}
\mathcal{A}_{\alpha_j}' & \mathcal{X}_{\alpha_j} \\
\mathcal{Y}_{\alpha_j}'  & -\mathcal{A}_{\alpha_j}'
\end{array}%
\right).$$
where by hypotheses if $\mathbb{J}_{[\alpha_j]_M}$ is of complex type, then $\mathbb{J}_{[\alpha_j]_M}=\mathbb{J}_{[\alpha_j]_M}'=\mathcal{J}_{[\alpha_j]_M}^{c}$ which implies that $\mathcal{A}_{\alpha_j}=\mathcal{A}_{\alpha_j}'=-J^{c}$ and $-\mathcal{A}_{\alpha_j}=-\mathcal{A}_{\alpha_j}'=(J^{c})^\ast$ and $\mathcal{X}_{\alpha_j}=\mathcal{Y}_{\alpha_j}'=\mathcal{Y}_{\alpha_j}=0$. Otherwise, if $\mathbb{J}_{[\alpha_j]_M}$ is of noncomplex type, then $\mathbb{J}_{[\alpha_j]_M}'$ is also of noncomplex type.

By Lemmas \ref{LemmaBsymplectic} and \ref{LemmaBcomplex} we have that for all $j=1,2,\cdots,d$ there exist $B$-transformations $B_j\in\bigwedge^2V_{[\alpha_j]_M}^\ast$ such that
	\begin{equation}\label{tired}
	e^{-B_j}\mathbb{J}_{[\alpha_j]_M} e^{B_j} =\mathbb{J}_{[\alpha_j]_M}'.
	\end{equation}
	If we define $B\in\bigwedge^2(\mathfrak{n}^-)^\ast$ by
	$B={\tiny \left( 
		\begin{array}{cccc}
		B_{1} &  &  & \\ 
		&  B_{2}&  & \\
		&  & \ddots & \\
		&  &  & B_{d}
		\end{array}%
		\right)}$, then setting
	$$\mathbb{J}={\tiny\left( 
		\begin{array}{ccccccccc}
		\mathcal{A}_{\alpha_1} &  &  &  & \mathcal{X}_{\alpha_1} & & \\
		& \mathcal{A}_{\alpha_2} &  &  & &  \mathcal{X}_{\alpha_2}& \\
		
		&  & \ddots & & &   &\ddots \\
		&  &  & \mathcal{A}_{\alpha_d} & &  & & \mathcal{X}_{\alpha_d} \\
		\mathcal{Y}_{\alpha_1}   & &  &  & -\mathcal{A}_{\alpha_1} & &\\
		& \mathcal{Y}_{\alpha_2} &  &  &  & -\mathcal{A}_{\alpha_2} &\\
		&  & \ddots & & &   &\ddots\\
		&  &  &\mathcal{Y}_{\alpha_d} & &   & & -\mathcal{A}_{\alpha_d}\\
		
	\end{array}%
	\right)},$$
we get that the identities deduced from \eqref{tired} imply that $e^{-B}\mathbb{J}e^{B}=\mathbb{J}'$.
\end{proof}
As an immediate consequence of Propositions \ref{Bmoduli1} and \ref{BModuli2} we obtain:
\begin{corollary}\label{BModuli2.1}
Suppose that $\displaystyle \Pi^-=\bigcup_{j=1}^d[\alpha_j]_M$ where $d$ is the number of $M$-equivalence classes. Then
$$\mathfrak M_a (\mathbb F)= \prod_{[\alpha_j]_M\subset\Pi^-} \mathfrak{M}_{\alpha_j}(\mathbb{F})=(\mathbb{R}^\ast \cup (\mathbb{R}^\ast\times \mathbb{R}))_{\alpha_1} \times \cdots \times (\mathbb{R}^\ast \cup (\mathbb{R}^\ast\times \mathbb{R}))_{\alpha_d}.$$
In particular, $\mathfrak M_a (\mathbb F)$ admits a natural topology induced from $\mathbb{R}^{2d}$ with which it is homotopy equivalent to the $d$-torus $\mathbb{T}^d$.
\end{corollary}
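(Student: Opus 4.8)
The plan is to turn the global quotient $\mathfrak{M}_a(\mathbb{F})$ into a class-by-class computation and then read off each factor from Proposition \ref{Bmoduli1}. First I would record that Lemmas \ref{decomposition} and \ref{isotropic}, together with the ensuing decomposition $\mathbb{J}=\sum_{[\alpha_j]_M}\mathbb{J}_{[\alpha_j]_M}$, give a canonical bijection $\mathcal{M}_a(\mathbb{F})\cong\prod_{j=1}^d\mathcal{M}_{\alpha_j}(\mathbb{F})$ sending an invariant structure to the tuple $(\mathbb{J}_{[\alpha_1]_M},\dots,\mathbb{J}_{[\alpha_d]_M})$ of its restrictions, the inverse assembling a block-diagonal operator from any tuple of per-class structures. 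I would then study the induced map $\bar{\Phi}\colon\mathfrak{M}_a(\mathbb{F})\to\prod_{j=1}^d\mathfrak{M}_{\alpha_j}(\mathbb{F})$, $[\mathbb{J}]\mapsto([\mathbb{J}_{[\alpha_1]_M}],\dots,[\mathbb{J}_{[\alpha_d]_M}])$, and show it is a bijection.

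The decisive structural input, which I would establish first, is that every \emph{invariant} $B$-transformation is block-diagonal with respect to $\mathfrak{n}^-=\bigoplus_j V_{[\alpha_j]_M}$. Since $K_\Theta=M$ for maximal flags, an invariant $B$ is an $M$-invariant element of $\bigwedge^2(\mathfrak{n}^-)^\ast$, and $M$ acts on each $V_{[\alpha_j]_M}^\ast$ through a sign character $\epsilon_j$ (constant along the class, and equal to the character on $V_{[\alpha_j]_M}$ because $\textnormal{Ad}^\ast(m)=\textnormal{Ad}(m^{-1})$ acts by the same sign on $\mathfrak{g}_{\pm\beta}$). By the very definition of $M$-equivalence, distinct classes carry distinct characters, so a cross term in $V_{[\alpha_i]_M}^\ast\wedge V_{[\alpha_j]_M}^\ast$ with $i\neq j$ transforms by $\epsilon_i\epsilon_j\neq 1$ and cannot be invariant. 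Hence $\mathcal{B}=\prod_{j}\mathcal{B}_{\alpha_j}$ and the $\mathcal{B}$-action on $\prod_j\mathcal{M}_{\alpha_j}(\mathbb{F})$ is the product of the per-class actions; this immediately gives well-definedness of $\bar{\Phi}$, since $\mathbb{J}'=e^{-B}\mathbb{J}e^{B}$ forces $\mathbb{J}'_{[\alpha_j]_M}=e^{-B_j}\mathbb{J}_{[\alpha_j]_M}e^{B_j}$ for each $j$.

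Surjectivity of $\bar{\Phi}$ is then clear from the block-diagonal inverse of the decomposition bijection. For injectivity I would invoke Proposition \ref{BModuli2}: if $\bar{\Phi}([\mathbb{J}])=\bar{\Phi}([\mathbb{J}'])$ then for each $j$ the restrictions represent the same point of $\mathfrak{M}_{\alpha_j}(\mathbb{F})$, which by Proposition \ref{Bmoduli1} together with Lemmas \ref{LemmaBsymplectic} and \ref{LemmaBcomplex} means they are literally equal in the complex-type case and share the same parameter $x$ (equivalently the same block $\mathcal{X}_{\alpha_j}$) in the noncomplex-type case. These are exactly the hypotheses of Proposition \ref{BModuli2}, so $e^{-B}\mathbb{J}e^{B}=\mathbb{J}'$ for some invariant $B$ and $[\mathbb{J}]=[\mathbb{J}']$. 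Feeding in the identification $\mathfrak{M}_{\alpha_j}(\mathbb{F})=\mathbb{R}^\ast\cup(\mathbb{R}^\ast\times\mathbb{R})$ from Proposition \ref{Bmoduli1} yields the displayed product formula.

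Finally, for the topological assertion I would transport the homeomorphism $\mathfrak{M}_{\alpha_j}(\mathbb{F})\approx\mathbb{R}^2\setminus\{(0,0)\}$ built in the proof of Proposition \ref{Bmoduli1} through the product, obtaining $\mathfrak{M}_a(\mathbb{F})\approx(\mathbb{R}^2\setminus\{(0,0)\})^d$ with the subspace topology inherited from $\mathbb{R}^{2d}$, and use that a finite product of homotopy equivalences is a homotopy equivalence to conclude $\mathfrak{M}_a(\mathbb{F})\simeq(S^1)^d=\mathbb{T}^d$. I expect the only genuinely delicate point to be the block-diagonality claim: the entire product decomposition, and in particular the fact that no invariant $B$-transformation can mix distinct $M$-classes, rests on the characters $\epsilon_j$ being pairwise distinct. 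Once that is in place, everything else is a formal consequence of the per-class analysis already carried out in Propositions \ref{Bmoduli1} and \ref{BModuli2}.
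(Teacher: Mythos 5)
Your proof is correct and follows the same route as the paper: the paper states this corollary as an immediate consequence of Propositions \ref{Bmoduli1} and \ref{BModuli2}, exactly the two inputs you use for the identification of the factors and for injectivity of $\bar{\Phi}$. The one place where you go beyond the paper's (unwritten) argument is the block-diagonality claim $\mathcal{B}=\prod_j\mathcal{B}_{\alpha_j}$, proved via the observation that distinct $M$-classes carry distinct sign characters of $M$, so no cross term in $V_{[\alpha_i]_M}^\ast\wedge V_{[\alpha_j]_M}^\ast$ can be invariant; this is a genuine and worthwhile supplement, since the direction ``global invariant $B$-equivalence implies per-class equivalence'' (well-definedness of $\bar{\Phi}$) is needed for the product formula and is not addressed by Proposition \ref{BModuli2}, which only goes the other way. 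An alternative, slightly more computational way to close that same gap is to note that any $B$-transform $e^{-B}\mathbb{J}e^{B}$ has the same upper-right block $\mathcal{X}$ as $\mathbb{J}$, so a global equivalence forces per-class agreement of the $\mathcal{X}_{\alpha_j}$, which by Lemmas \ref{LemmaBsymplectic} and \ref{LemmaBcomplex} is exactly per-class equivalence; either way, your argument is complete and consistent with the paper's intended proof.
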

Let us decompose the set of roots $\Pi^-= \Pi^-_{nc}\cup\Pi^-_{c}$ where $\mathbb{J}_{[\alpha]_M}=\mathcal{J}_{[\alpha]_M}^{cn}$ is of noncomplex type for all $[\alpha]_M\subset \Pi^-_{nc}$ and $\mathbb{J}_{[\alpha]_M}=\mathcal{J}_{[\alpha]_M}^{c}$ is of complex type for all $[\alpha]_M\subset \Pi^-_{c}$. It is clear that because of the invariance all our generalized almost complex structures $\mathbb{J}$ are regular. In particular, $\textnormal{Type}(\mathbb{J})=\textnormal{Type}(\mathbb{J})_{b_0}=\vert \Pi^-_{c}/\sim_M\vert$. From previous results, when $\Pi^-_{c}=\emptyset$ we get that $\mathbb{J}$ is a $B$-transformation of a structure of symplectic type and when $\Pi^-_{c}=\Pi^-$ we obtain that $\mathbb{J}$ is of complex type. Otherwise, when $\Pi^-_{nc}\subset\Pi^-$ is nonempty we get a generalized almost complex structure which is neither complex nor symplectic.

Since finite products of generalized almost complex manifolds are still generalized complex with the obvious induced structure \cite{H,G3}, as an immediate consequence of Lemmas \ref{LemmaBsymplectic}, \ref{LemmaBcomplex} and Proposition \ref{BModuli2} we get an explicit expression for the invariant pure spinor associated to each structure in $\mathcal{M}_a(\mathbb{F})$.

\begin{corollary}\label{PureSpinor}
Let $\mathbb{J}$ be an invariant generalized almost complex structure on $\mathbb{F}$. Then the invariant pure spinor line $K_\mathcal{L}<\bigwedge^\bullet (\mathfrak{n}^-)^\ast\otimes \mathbb{C}$ associated to $\mathbb{J}$ is generated by
$$\varphi=e^{\sum_{[\alpha]_M\subset \Pi^-_{nc}}(B_\alpha +i\omega_\alpha)}\bigwedge_{[\alpha]_M\subset \Pi^-_{c}}\Omega_\alpha,$$
where $B_\alpha=\dfrac{a}{x}X_\alpha^\ast\wedge X_\beta^\ast$ and $\omega_\alpha=\dfrac{1}{x}X_\alpha^\ast\wedge X_\beta^\ast$ with $\alpha\sim_M \beta$ for all $[\alpha]_M\subset \Pi^-_{nc}$ and $\Omega_\alpha\in \wedge^{1,0}V_{[\alpha]_M}^\ast$ defines a complex structure on $V_{[\alpha]_M}$ for all $[\alpha]_M\subset \Pi^-_{c}$.
\end{corollary}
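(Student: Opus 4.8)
The plan is to leverage the block structure already in force throughout Section \ref{S:3}: the invariant structure splits as $\mathbb{J}=\sum_{[\alpha]_M}\mathbb{J}_{[\alpha]_M}$ relative to the orthogonal splitting $\mathfrak{n}^-\oplus\mathfrak{n}^+=\bigoplus_{[\alpha]_M}(V_{[\alpha]_M}\oplus V^\ast_{[\alpha]_M})$ into summands of real dimension $4$, each carrying its own two-dimensional generalized complex structure. Correspondingly the $+i$-eigenbundle decomposes as $L=\bigoplus_{[\alpha]_M}L_{[\alpha]_M}$. First I would record that the spinor module of an orthogonal direct sum is the graded tensor product of the spinor modules of the summands, so the pure spinor line of $\mathbb{J}$ is generated by the wedge $\bigwedge_{[\alpha]_M}\varphi_{[\alpha]_M}$ of generators $\varphi_{[\alpha]_M}\in\bigwedge^\bullet V_{[\alpha]_M}^\ast\otimes\mathbb{C}$ of the block pure spinor lines; this is precisely the ``obvious induced structure'' on products cited just before the statement. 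The whole argument then reduces to identifying each $\varphi_{[\alpha]_M}$ and checking that the assembled product is a genuine, nondegenerate pure spinor.

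For a class $[\alpha]_M\subset\Pi^-_{nc}$, where $\mathbb{J}_{[\alpha]_M}=\mathcal{J}^{nc}_{[\alpha]_M}$ is of noncomplex type, I would invoke the identity \eqref{AllSymplectic}, which exhibits $\mathcal{J}^{nc}_{[\alpha]_M}$ as the $B$-transform $e^{-B_\alpha}\mathbb{J}_{\omega_\alpha}e^{B_\alpha}$ of the symplectic-type structure \eqref{ECType1} with $\omega_\alpha=\tfrac{1}{x}X_\alpha^\ast\wedge X_\beta^\ast$ and $B_\alpha=\tfrac{a}{x}X_\alpha^\ast\wedge X_\beta^\ast$. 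By Example \ref{ExampleS} the pure spinor line of $\mathbb{J}_{\omega_\alpha}$ is generated by $e^{i\omega_\alpha}$, and since a $B$-transform acts on spinors by $\varphi\mapsto e^{B_\alpha}\cdot\varphi$ (consistently with the action $e^{B}\cdot\mathbb{J}=e^{-B}\mathbb{J}e^{B}$ on structures), the block generator is $\varphi_{[\alpha]_M}=e^{B_\alpha+i\omega_\alpha}$.

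For a class $[\alpha]_M\subset\Pi^-_{c}$, where $\mathbb{J}_{[\alpha]_M}=\mathcal{J}^{c}_{[\alpha]_M}$ is of complex type, Lemma \ref{LemmaBcomplex} guarantees that every invariant $B$-transform fixes $\mathbb{J}_{[\alpha]_M}$, so no exponential factor is needed. Because $\dim_{\mathbb{R}}V_{[\alpha]_M}=2$, Example \ref{ExampleC} identifies the block generator as the single $(1,0)$-form $\Omega_\alpha\in\wedge^{1,0}V_{[\alpha]_M}^\ast$ attached to the complex structure $J^c$. Wedging the two families of generators over all classes then yields
$$\varphi=\bigwedge_{[\alpha]_M\subset\Pi^-_{nc}}e^{B_\alpha+i\omega_\alpha}\wedge\bigwedge_{[\alpha]_M\subset\Pi^-_{c}}\Omega_\alpha=e^{\sum_{[\alpha]_M\subset\Pi^-_{nc}}(B_\alpha+i\omega_\alpha)}\bigwedge_{[\alpha]_M\subset\Pi^-_{c}}\Omega_\alpha,$$
the last equality because the $2$-forms $B_\alpha+i\omega_\alpha$ live in wedge-distinct blocks $\bigwedge^2 V_{[\alpha]_M}^\ast$, so their exponentials commute and multiply to the exponential of the sum. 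The underlying bookkeeping of the $B$-transform action is exactly what Lemmas \ref{LemmaBsymplectic}, \ref{LemmaBcomplex} and Proposition \ref{BModuli2} already organize.

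The step I expect to demand the most care is justifying the multiplicativity of pure spinors across the direct sum together with the nondegeneracy condition $L_\varphi\cap\overline{L}_\varphi=\{0\}$ from part $\iota\iota\iota$ of Definition \ref{Def1}. Concretely this means verifying $(\varphi,\overline{\varphi})=\Omega\wedge\overline{\Omega}\wedge\omega^{\,n-k}\neq0$ block by block: each noncomplex block contributes a nonvanishing power of $\omega_\alpha$ (as $x\neq0$), while each complex block contributes $\Omega_\alpha\wedge\overline{\Omega}_\alpha\neq0$ since $\Omega_\alpha$ spans the $(1,0)$-line of a genuine complex structure. Because the blocks are mutually transverse inside $\mathfrak{n}^-\oplus\mathfrak{n}^+$, these nonzero local contributions wedge to a nonzero top form, confirming that $\varphi$ is a well-defined invariant pure spinor; everything else is routine.
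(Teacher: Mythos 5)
Your proposal is correct and follows essentially the same route as the paper: the paper treats this as an immediate consequence of the block decomposition $\mathbb{J}=\sum_{[\alpha]_M}\mathbb{J}_{[\alpha]_M}$, the multiplicativity of pure spinors over orthogonal direct sums (the product structure cited from \cite{H,G3}), Equation \eqref{AllSymplectic} with Example \ref{ExampleS} giving $e^{B_\alpha+i\omega_\alpha}$ on noncomplex blocks, and Lemma \ref{LemmaBcomplex} with Example \ref{ExampleC} giving $\Omega_\alpha$ on complex blocks. Your added block-by-block verification of the nondegeneracy condition $(\varphi,\overline{\varphi})\neq 0$ is a detail the paper leaves implicit, but it does not change the argument.
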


As $\varphi$ is initially defined on $T_{b_0}\mathbb{F}=\mathfrak{n}^-$ we may also use  invariance to define $\varphi$ on $\mathbb{F}$ as follows. Assume that $\varphi\in \bigwedge^r (\mathfrak{n}^-)^\ast\otimes \mathbb{C}$. Thus, at $x=gM\in \mathbb{F}=K/M$ for some $g\in K$ we define
$$\widetilde{\varphi}_x(X_1(x),\cdots, X_r(x))=\varphi(\Ad(g^{-1})_{\ast,x}X_1(x),\cdots,\Ad(g^{-1})_{\ast,x}X_r(x)).$$
In this case $\widetilde{\varphi}\in \bigwedge^r T^\ast\mathbb{F}\otimes \mathbb{C}$ would be a pure spinor determined by $\mathbb{J}: \mathbb{TF}\to \mathbb{TF}$ which satisfies $(\Ad(g))^\ast\widetilde{\varphi}=\widetilde{\varphi}$ for all $g\in K$. The same thing that we did with $\varphi$ can be done with the $B$-transformation constructed in Proposition \ref{BModuli2}.

\subsection{Invariant generalized almost Hermitian structures}
Let us now classify the invariant generalized almost Hermitian structures on $\mathbb{F}$. Given the requirement of invariance we need to find pairs of commuting invariant  generalized almost complex structures $(\mathbb{J},\mathbb{J'})$ such that $G:=-\mathbb{J}\mathbb{J'}$ defines a positive definite metric on $\mathfrak{n}^-\oplus (\mathfrak{n}^-)^\ast$. Because every invariant generalized almost complex structure on $\mathbb{F}$ has the form  $\displaystyle \mathbb{J} = \sum_{[\alpha]_M} \mathbb{J}_{[\alpha]_M}$, we just need to determinate those commuting pairs $(\mathbb{J}_{[\alpha]_M},\mathbb{J'}_{[\alpha]_M})$ such that $G_\alpha=-\mathbb{J}_{[\alpha]_M}\mathbb{J'}_{[\alpha]_M}$ defines a positive definite metric on $V_{[\alpha]_M}\oplus V_{[\alpha]_M}^\ast$ of signature $(2,2)$ for all $[\alpha]_M\subset \Pi^-$. Thus, latter requirement allows us to conclude that we only have to work with pairs of the form $(\mathcal{J}^c_{[\alpha]_M},\mathcal{J}^{nc}_{[\alpha]_M})$.

Let $\alpha\in\Pi^-$ be a fixed root. After a straightforward computation we may easily see that $\mathcal{J}^c_{[\alpha]_M}$ and $\mathcal{J}^{nc}_{[\alpha]_M}$ always commute. Indeed,
\begin{equation}\label{GKhaler1}
\mathcal{J}^c_{[\alpha]_M}\mathcal{J}^{nc}_{[\alpha]_M}={\tiny\left( 
\begin{array}{cccc}
ab & -\dfrac{a(1+b^2)}{c} & -\dfrac{x(1+b^2)}{c} & -bx\\ 
ac &  -ab & -bx & -cx\\
-cy & by & ab & ac\\
by & -\dfrac{y(1+b^2)}{c} & -\dfrac{a(1+b^2)}{c} & -ab
\end{array}%
\right)}=\mathcal{J}^{nc}_{[\alpha]_M}\mathcal{J}^c_{[\alpha]_M}.
\end{equation}
We will mainly use Proposition \ref{ModuliM}, Lemma \ref{LemmaBsymplectic}, and the fact concluded from Equation \eqref{AllSymplectic}. Let $\mathbb{J}_{\omega_\alpha}$ be the structure of symplectic type defined by means of $\mathcal{J}^{nc}_{[\alpha]_M}$ using the parameter $x$ (look at Expression \eqref{ECType1}) and let us consider
\begin{equation}\label{GKhaler1.1}
\widetilde{G}_\alpha:=-\mathbb{J}_{\omega_\alpha}\mathcal{J}^c_{[\alpha]_M}=-\mathcal{J}^c_{[\alpha]_M}\mathbb{J}_{\omega_\alpha}={\tiny\left( 
\begin{array}{cccc}
0 & 0 & \dfrac{x(1+b^2)}{c} & bx\\ 
0 &  0 & bx & cx\\
\dfrac{c}{x} & -\dfrac{b}{x} & 0 & 0\\
-\dfrac{b}{x} & \dfrac{1+b^2}{cx} & 0 & 0
\end{array}%
\right)}.
\end{equation}
It is simple to check that $\widetilde{G}_\alpha=\left( 
\begin{array}{cc}
0 & g^{-1}\\
g & 0
\end{array}%
\right)$ where $g=\left( 
\begin{array}{cc}
	\dfrac{c}{x} & -\dfrac{b}{x}\\
	-\dfrac{b}{x} & \dfrac{1+b^2}{cx}
\end{array}%
\right)$.
\begin{lemma}\label{GKhaler2}
The matrix $g$ allows us to define an inner product on $V_{[\alpha]_M}$ if and only if $cx>0$.
\end{lemma}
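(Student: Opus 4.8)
The plan is to observe that the matrix $g$ is already symmetric, so asking that it define an inner product on $V_{[\alpha]_M}$ amounts to asking that it be positive definite. Since $\mathcal{J}^c_{[\alpha]_M}$ is of complex type we have $c\neq 0$, and since $\mathbb{J}_{\omega_\alpha}$ is the structure of symplectic type built from the nonzero parameter $x$ (see Expression \eqref{ECType1}), we have $x\neq 0$; hence every entry of $g$ is well defined and the two denominators $x$ and $cx$ never vanish. Positive definiteness of a real symmetric $2\times 2$ matrix is governed by Sylvester's criterion, namely that both the leading entry and the determinant be strictly positive, so the whole problem reduces to computing these two quantities.

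First I would record the leading principal minor, which is simply the $(1,1)$-entry $\tfrac{c}{x}$; its sign is exactly the sign of the product $cx$. Next I would compute the determinant, and this is the one step where the algebra matters:
$$\det g=\frac{c}{x}\cdot\frac{1+b^2}{cx}-\left(-\frac{b}{x}\right)^2=\frac{1+b^2}{x^2}-\frac{b^2}{x^2}=\frac{1}{x^2}.$$
The key point emerging from this calculation is that $\det g=\tfrac{1}{x^2}>0$ always, independently of $b$ and $c$. Thus the determinant never obstructs positive definiteness, and the entire sign question collapses onto the leading minor $\tfrac{c}{x}$.

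With these two computations in hand the conclusion is immediate: by Sylvester's criterion $g$ is positive definite if and only if $\tfrac{c}{x}>0$ and $\det g>0$, and since the latter holds automatically, $g$ is positive definite precisely when $\tfrac{c}{x}>0$, that is, precisely when $cx>0$. I do not anticipate a genuine obstacle here; the only substantive input is the cancellation $1+b^2-b^2=1$ that makes the determinant a perfect positive square, and the rest is a direct application of the positivity criterion. One could also phrase the argument intrinsically by diagonalizing or by exhibiting $g$ as $\tfrac{1}{x}$ times a manifestly positive definite matrix when $c>0$, but the Sylvester computation above is the most economical route.
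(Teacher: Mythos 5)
Your proof is correct. Both you and the paper begin the same way, reducing the statement to positive definiteness of the symmetric matrix $g$, but from there the routes differ: the paper computes the two eigenvalues of $g$ explicitly,
$$\lambda_{1,2}=\frac{c^2+b^2+1\pm\sqrt{(c^2+b^2+1)^2-4c^2}}{2cx},$$
and argues that both are positive exactly when $cx>0$, which requires the auxiliary observation that $c^2+b^2+1\geq\sqrt{(c^2+b^2+1)^2-4c^2}$ always holds (and, implicitly, that the discriminant is nonnegative, as it factors as $((c-1)^2+b^2)((c+1)^2+b^2)$). You instead invoke Sylvester's criterion on the leading principal minors, and your whole argument collapses onto the single identity
$$\det g=\frac{1+b^2}{x^2}-\frac{b^2}{x^2}=\frac{1}{x^2}>0,$$
leaving only the sign of the $(1,1)$-entry $\tfrac{c}{x}$ to decide the matter. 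Your route is the more economical of the two: the determinant is a perfect square independent of $b$ and $c$, so no square roots or auxiliary inequalities ever appear. What the paper's computation buys in exchange is the explicit spectrum of $g$, but nothing in the remainder of the section actually uses those eigenvalues, so your shorter argument would serve equally well.
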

\begin{proof}
Since $g$ is clearly symmetric, we only need to look at the conditions under which the $2$ eigenvalues of $g$ are positive. They are given by
$$\lambda_1=\dfrac{c^2+b^2+1+\sqrt{(c^2+b^2+1)^2-4c^2}}{2cx}\quad \textnormal{and} \quad \lambda_2=\dfrac{c^2+b^2+1-\sqrt{(c^2+b^2+1)^2-4c^2}}{2cx}.$$
Recall that both $x$ and $c$ are nonzero. Obviously $\lambda_1>0$ if and only if $cx>0$ and because $c^2+b^2+1\geq \sqrt{(c^2+b^2+1)^2-4c^2}$ always holds true, this is the same condition for $\lambda_2$ being positive.
\end{proof}
Given the generalized complex structures $\mathcal{J}^c_{[\alpha]_M}$ and $\mathcal{J}^{nc}_{[\alpha]_M}$, we denote them respectively by $(\mathcal{J}^c_{[\alpha]_M})^+$ and $(\mathcal{J}^{nc}_{[\alpha]_M})^+$ if $c>0$ and $x>0$. Otherwise, we denote them respectively by $(\mathcal{J}^c_{[\alpha]_M})^-$ and $(\mathcal{J}^{nc}_{[\alpha]_M})^-$. So, in terms of this notation we have:
\begin{proposition}\label{GKhaler3}
	Let $(\mathbb{J},\mathbb{J}')$ be an invariant generalized almost Hermitian structure on $\mathbb{F}$. Then for every $M$-equivalence class $[\alpha]_M\subset \Pi^-$, 
	the pair  $(\mathbb{J}_{[\alpha]_M},\mathbb{J}_{[\alpha]_M}')$ takes one of the following values
	\begin{center}
		\begin{tabular}{c|c}
			$\mathbb{J}_{[\alpha]_M}$ & $\mathbb{J}_{[\alpha]_M}'$\\
			\hline
			$(\mathcal{J}^c_{[\alpha]_M})^+$ & $(\mathcal{J'}^{cn}_{[\alpha]_M})^+$ \\
			$(\mathcal{J}^{cn}_{[\alpha]_M})^+$ & $(\mathcal{J'}^c_{[\alpha]_M})^+$ \\
			$(\mathcal{J}^c_{[\alpha]_M})^-$ & $(\mathcal{J'}^{nc}_{[\alpha]_M})^-$  \\
			$(\mathcal{J}^{nc}_{[\alpha]_M})^-$ & $(\mathcal{J'}^c_{[\alpha]_M})^-$.
		\end{tabular}
	\end{center}
\end{proposition}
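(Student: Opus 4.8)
The plan is to localize the entire problem to a single $M$-equivalence class and then read off the four rows from the sign analysis already prepared in Lemma~\ref{GKhaler2}. First I would use the block decompositions $\mathbb{J}=\sum_{[\alpha]_M}\mathbb{J}_{[\alpha]_M}$ and $\mathbb{J}'=\sum_{[\alpha]_M}\mathbb{J}'_{[\alpha]_M}$, which make $G=-\mathbb{J}\mathbb{J}'$ block diagonal with respect to the splitting $\bigoplus_{[\alpha]_M}(V_{[\alpha]_M}\oplus V^\ast_{[\alpha]_M})$. Hence $(\mathbb{J},\mathbb{J}')$ is an invariant generalized almost Hermitian structure if and only if, for each class, $\mathbb{J}_{[\alpha]_M}$ and $\mathbb{J}'_{[\alpha]_M}$ commute and $G_\alpha=-\mathbb{J}_{[\alpha]_M}\mathbb{J}'_{[\alpha]_M}$ is a positive definite generalized metric on the four-dimensional space $V_{[\alpha]_M}\oplus V^\ast_{[\alpha]_M}$. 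This reduces everything to one fixed class, so from here I drop the subscripts.

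Next I would justify the restriction to pairs of mixed type. Writing $G_\alpha$ in the block form dictated by $V_{[\alpha]_M}\oplus V^\ast_{[\alpha]_M}$ and recalling that $\langle\cdot,\cdot\rangle$ pairs $V_{[\alpha]_M}$ only with $V^\ast_{[\alpha]_M}$, positivity of $\langle G_\alpha v,v\rangle$ evaluated on $v=X$ (that is $\xi=0$) and on $v=\xi$ (that is $X=0$) forces the two off-diagonal blocks $V_{[\alpha]_M}\to V^\ast_{[\alpha]_M}$ and $V^\ast_{[\alpha]_M}\to V_{[\alpha]_M}$ to be symmetric and positive definite. If both structures were of complex type, then $G_\alpha$ would be block diagonal and these blocks would vanish; if both were of noncomplex type, then the off-diagonal blocks would be real linear combinations of the antisymmetric matrices $\mathcal{X}_\alpha,\mathcal{Y}_\alpha$, hence antisymmetric, so the associated quadratic forms would vanish. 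Thus exactly one member is of complex type and the other of noncomplex type, and by Equation~\eqref{GKhaler1} any such pair automatically commutes.

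Having fixed the order, say $\mathbb{J}_{[\alpha]_M}=\mathcal{J}^c_{[\alpha]_M}$ and $\mathbb{J}'_{[\alpha]_M}=\mathcal{J}^{nc}_{[\alpha]_M}$ (the reversed order is handled identically and yields the second and fourth rows of the table), I would transport the positivity of $G_\alpha=-\mathcal{J}^c_{[\alpha]_M}\mathcal{J}^{nc}_{[\alpha]_M}$ to the model metric $\widetilde{G}_\alpha=-\mathcal{J}^c_{[\alpha]_M}\mathbb{J}_{\omega_\alpha}$ of Equation~\eqref{GKhaler1.1}. By Equation~\eqref{AllSymplectic} there is a $B$-transformation $B_\alpha=\tfrac{a}{x}X_\alpha^\ast\wedge X_\beta^\ast$ with $e^{-B_\alpha}\mathbb{J}_{\omega_\alpha}e^{B_\alpha}=\mathcal{J}^{nc}_{[\alpha]_M}$, while Lemma~\ref{LemmaBcomplex} gives $e^{-B_\alpha}\mathcal{J}^c_{[\alpha]_M}e^{B_\alpha}=\mathcal{J}^c_{[\alpha]_M}$. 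Applying Proposition~\ref{ModuliM}(ii) to the Hermitian pair $(\mathcal{J}^c_{[\alpha]_M},\mathbb{J}_{\omega_\alpha})$ then yields $G_\alpha=e^{-B_\alpha}\widetilde{G}_\alpha e^{B_\alpha}$; since $e^{B_\alpha}$ is orthogonal with respect to $\langle\cdot,\cdot\rangle$, the metric $G_\alpha$ is positive definite precisely when $\widetilde{G}_\alpha$ is.

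Finally, Lemma~\ref{GKhaler2} says that $\widetilde{G}_\alpha$ is positive definite exactly when $cx>0$, i.e. when $c$ and $x$ share the same sign. Translating into the notation preceding the statement, $c>0,x>0$ gives $\big((\mathcal{J}^c_{[\alpha]_M})^+,(\mathcal{J'}^{nc}_{[\alpha]_M})^+\big)$ and $c<0,x<0$ gives $\big((\mathcal{J}^c_{[\alpha]_M})^-,(\mathcal{J'}^{nc}_{[\alpha]_M})^-\big)$, while reversing the roles of $\mathbb{J}$ and $\mathbb{J}'$ produces the remaining two rows. I expect the main obstacle to be the third step: correctly carrying the positivity of the honest metric $G_\alpha$, with general parameters $a,x,y$, back to the model $\widetilde{G}_\alpha$, with $a=0$. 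This hinges on the fact that the $B$-transformation fixes the complex-type factor while moving the noncomplex factor onto its symplectic representative, so that Lemma~\ref{GKhaler2} applies verbatim; verifying that this $B$-transformation indeed fixes $\mathcal{J}^c_{[\alpha]_M}$ and that positive-definiteness is preserved is where the earlier lemmas must be combined with care.
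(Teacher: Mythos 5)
Your proof is correct and follows essentially the same route as the paper's: localize to a single $M$-equivalence class, reduce to mixed-type pairs, transport the noncomplex factor onto its symplectic representative via the $B$-transformation of Equation \eqref{AllSymplectic} (which fixes the complex factor by Lemma \ref{LemmaBcomplex}, so the identity $G_\alpha=e^{-B_\alpha}\widetilde{G}_\alpha e^{B_\alpha}$ holds and orthogonality of $e^{B_\alpha}$ transfers positivity), and then read off the sign condition $cx>0$ from Lemma \ref{GKhaler2}. The only substantive difference is that you actually justify the restriction to pairs of mixed type by showing the off-diagonal blocks of $G_\alpha$ vanish (both complex) or are antisymmetric (both noncomplex), a step the paper merely asserts before the proposition; this is a worthwhile addition rather than a different method.
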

\begin{proof}
As consequence of Lemma \ref{GKhaler2} the values of the previous table are precisely those that either the pair $(\mathbb{J}_{\omega_\alpha},\mathcal{J'}^c_{[\alpha]_M})$ or $(\mathcal{J}^c_{[\alpha]_M},\mathbb{J'}_{\omega'_\alpha})$ can take. In any case we get a generalized metric $\widetilde{G}_\alpha$ as that given by Equation \eqref{GKhaler1.1}. By Lemma \ref{LemmaBsymplectic} and Equation \eqref{AllSymplectic} we have that
$$e^{B_\alpha}\cdot (\mathbb{J}_{\omega_\alpha},\mathcal{J'}^c_{[\alpha]_M})=(e^{-B_\alpha}\mathbb{J}_{\omega_\alpha}e^{B_\alpha},e^{-B_\alpha}\mathcal{J'}^c_{[\alpha]_M}e^{B_\alpha})=(\mathbb{J}_{[\alpha]_M},\mathcal{J'}^c_{[\alpha]_M})=(\mathbb{J}_{[\alpha]_M},\mathbb{J'}_{[\alpha]_M}),$$
for $B_\alpha=\dfrac{a}{x}X_\alpha^\ast\wedge X_\beta^\ast$ with $\alpha\sim_M \beta$. Analogously, $e^{B'_\alpha}\cdot(\mathcal{J}^c_{[\alpha]_M},\mathbb{J'}_{\omega'_\alpha})=(\mathbb{J}_{[\alpha]_M},\mathbb{J'}_{[\alpha]_M})$ for $B'_\alpha=\dfrac{a'}{x'}X_\alpha^\ast\wedge X_\beta^\ast$. Therefore, by Proposition \ref{ModuliM}, the generalized metric associated to the pair $(\mathbb{J}_{[\alpha]_M},\mathbb{J'}_{[\alpha]_M})$ is either $G_\alpha=e^{-B_\alpha}\widetilde{G}_\alpha e^{B_\alpha}$ or $G_\alpha'=e^{-B'_\alpha}\widetilde{G'}_\alpha e^{B'_\alpha}$ and, in any case, we precisely obtain a matrix of the form minus the matrix given in Equation \eqref{GKhaler1}. Last claim is consequence of having the constrains $a^2=xy-1$ or $a'^2=x'y'-1$ for the structures of noncomplex type. So, the result follows.
\end{proof}
Let  $\displaystyle \mathfrak{K}_\alpha(\mathbb{F})=\mathcal K_\alpha(\mathbb{F})/\mathcal{B}\subset \mathfrak{M}_\alpha(\mathbb{F})\times \mathfrak{M}_\alpha(\mathbb{F})$ denote the moduli space of generalized almost Hermitian structures on $V_{[\alpha]_M}$ under $B$-transformations. Thus, we obtain
\begin{corollary}
Suppose that $\displaystyle \Pi^-=\bigcup_{j=1}^d[\alpha_j]_M$ where $d$ is the number of $M$-equivalence classes. Then
$$\mathfrak K_a (\mathbb F)= \prod_{[\alpha_j]_M\subset\Pi^-} \mathfrak{K}_{\alpha_j}(\mathbb{F})=\mathbb{R}^\dagger_{\alpha_1} \times \cdots \times \mathbb{R}^\dagger_{\alpha_d},$$
where $\mathbb{R}^\dagger=\lbrace\mathbb{R}^+\times(\mathbb{R}^+\cup \mathbb{R}) \rbrace\cup \lbrace (\mathbb{R}^+\cup \mathbb{R})\times \mathbb{R}^+ \rbrace\cup\lbrace\mathbb{R}^-\times(\mathbb{R}^-\cup \mathbb{R}) \rbrace\cup \lbrace (\mathbb{R}^-\cup \mathbb{R})\times \mathbb{R}^- \rbrace$. Moreover, $\mathfrak K_a (\mathbb F)$ admits a natural topology induced by the product topology of $\mathfrak M_a (\mathbb F)\times \mathfrak M_a (\mathbb F)$.
\end{corollary}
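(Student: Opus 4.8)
The plan is to deduce the statement entirely from the class-by-class picture of Proposition \ref{GKhaler3}, to pass to the quotient by $\mathcal{B}$ using Lemmas \ref{LemmaBsymplectic} and \ref{LemmaBcomplex}, and then to assemble the factors exactly as in the proof of Corollary \ref{BModuli2.1}. First I would establish the product decomposition. Since every invariant generalized almost complex structure splits as $\mathbb{J}=\sum_{[\alpha]_M}\mathbb{J}_{[\alpha]_M}$ with respect to the $M$-invariant decomposition $\mathfrak{n}^-\oplus(\mathfrak{n}^-)^\ast=\bigoplus_{[\alpha]_M}(V_{[\alpha]_M}\oplus V_{[\alpha]_M}^\ast)$, both members of a generalized almost Hermitian pair $(\mathbb{J},\mathbb{J}')$ are block diagonal for this decomposition, and hence so is $G=-\mathbb{J}\mathbb{J}'$. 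Therefore $G$ is positive definite if and only if each block $G_\alpha=-\mathbb{J}_{[\alpha]_M}\mathbb{J}'_{[\alpha]_M}$ is positive definite, and $\mathbb{J}$ commutes with $\mathbb{J}'$ if and only if $\mathbb{J}_{[\alpha]_M}$ commutes with $\mathbb{J}'_{[\alpha]_M}$ on each summand. Exactly as in Proposition \ref{BModuli2}, the realizing $B$-transformation may be taken block diagonal, so the diagonal action of $\mathcal{B}$ respects the splitting and one obtains $\mathfrak{K}_a(\mathbb{F})=\prod_{[\alpha_j]_M\subset\Pi^-}\mathfrak{K}_{\alpha_j}(\mathbb{F})$.

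Next I would compute a single factor $\mathfrak{K}_\alpha(\mathbb{F})$. By Proposition \ref{GKhaler3} a Hermitian pair on $V_{[\alpha]_M}$ has, in each of the four admissible cases, exactly one member of complex type and the other of noncomplex type, with the relative signs governed by the positivity criterion $cx>0$ of Lemma \ref{GKhaler2}. Passing to the quotient, Lemma \ref{LemmaBcomplex} shows that the complex member is fixed by the whole group $\mathcal{B}$, so its moduli parameters $(c,b)$ survive (with $c$ carrying a sign), whereas Lemma \ref{LemmaBsymplectic} together with \eqref{AllSymplectic} collapses the noncomplex member onto its symplectic representative \eqref{ECType1}, leaving only the parameter $x$ (carrying a sign). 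Recording whether the complex member occupies the first or the second slot of $\mathfrak{M}_\alpha(\mathbb{F})\times\mathfrak{M}_\alpha(\mathbb{F})$ and imposing $cx>0$ produces precisely the four disjoint constituents of $\mathbb{R}^\dagger$: a symplectic half-line $\mathbb{R}^{\pm}$ paired with a complex half-plane $\mathbb{R}^{\pm}\times\mathbb{R}$ of the matching sign, in either order.

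Finally, the delicate point — and the main obstacle — is the internal consistency of the diagonal action: one must exhibit a single $B\in\bigwedge^2 V_{[\alpha]_M}^\ast$ that simultaneously straightens the noncomplex member to symplectic form and leaves its complex partner unchanged. This is guaranteed by Lemma \ref{LemmaBcomplex}, since every $B=rX_\alpha^\ast\wedge X_\beta^\ast$ fixes all complex-type structures; hence the reducing $B$-transformation never disturbs the partner and no incompatibility arises, which is exactly why the four cases of Proposition \ref{GKhaler3} descend cleanly to the quotient. Injectivity of the resulting parametrization follows from the same two lemmas: distinct values of $x$ give inequivalent symplectic classes by Lemma \ref{LemmaBsymplectic}, while the pair $(c,b)$ is rigid by Lemma \ref{LemmaBcomplex}. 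Since each $\mathfrak{K}_\alpha(\mathbb{F})$ inherits the subspace topology from $\mathfrak{M}_\alpha(\mathbb{F})\times\mathfrak{M}_\alpha(\mathbb{F})$ and the factorization above is a set bijection, endowing $\mathfrak{K}_a(\mathbb{F})$ with the product topology coming from $\mathfrak{M}_a(\mathbb{F})\times\mathfrak{M}_a(\mathbb{F})$ turns the identification into a homeomorphism, which yields the final claim.
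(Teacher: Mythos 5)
Your proposal is correct and follows essentially the same route as the paper: the paper's (very terse) proof likewise derives $\mathfrak{K}_{\alpha_j}(\mathbb{F})=\mathbb{R}^\dagger$ from Proposition \ref{GKhaler3} and the arguments in its proof (Lemmas \ref{LemmaBsymplectic}, \ref{LemmaBcomplex} and Equation \eqref{AllSymplectic}), and then obtains the product decomposition from Proposition \ref{BModuli2} and Corollary \ref{BModuli2.1}. Your write-up merely makes explicit the details the paper leaves implicit, in particular the observation that the block $B$-transformation collapsing the noncomplex member fixes its complex partner, which is exactly the mechanism used in the paper's proof of Proposition \ref{GKhaler3}.
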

\begin{proof}
As consequence of Proposition \ref{GKhaler3} and the arguments used in its proof, we easily conclude that $\mathfrak{K}_{\alpha_j}(\mathbb{F})=\mathbb{R}^\dagger$ for every $M$-equivalence class $[\alpha_j]_M\subset\Pi^-$. Therefore, the result immediately follows from Proposition \ref{BModuli2} and Corollary \ref{BModuli2.1}.
\end{proof}
Finally, let $\mathcal{G}_a(\mathbb{F})$ denote the set of all invariant generalized metrics on $\mathbb{F}$. Motivated by Proposition \ref{ModuliM} we can define an action of $\mathcal{B}$ on $\mathcal{G}_a(\mathbb{F})$ as $e^B\cdot G=e^{-B}Ge^B$. The quotient space $\mathfrak{G}_a(\mathbb{F}):=\mathcal{G}_a(\mathbb{F})/\mathcal{B}$ induced by this action is called {\it moduli space of invariant generalized metrics on $\mathbb{F}$ under invariant $B$-transformations}; see \cite{GVV}. 
\begin{corollary}\label{GMetrics}
Suppose that $\displaystyle \Pi^-=\bigcup_{j=1}^d[\alpha_j]_M$ where $d$ is the number of $M$-equivalence classes. Then
$$\mathfrak{G}_a(\mathbb{F})=\lbrace((\mathbb{R}^+)^2\times\mathbb{R}) \cup ((\mathbb{R}^-)^2\times\mathbb{R})\rbrace_{\alpha_1}\times \cdots \times \lbrace((\mathbb{R}^+)^2\times\mathbb{R}) \cup ((\mathbb{R}^-)^2\times\mathbb{R})\rbrace_{\alpha_d}.$$
\end{corollary}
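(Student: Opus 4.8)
The plan is to reduce the computation of $\mathfrak{G}_a(\mathbb{F})$ to a single $M$-equivalence class and then assemble the answer as a product, just as was done for $\mathfrak{M}_a(\mathbb{F})$ in Corollary \ref{BModuli2.1}. I would first observe that every invariant generalized metric is of the form $G=-\mathbb{J}\mathbb{J}'$ for an invariant generalized almost Hermitian pair, and since both $\mathbb{J}=\sum_{[\alpha]_M}\mathbb{J}_{[\alpha]_M}$ and $\mathbb{J}'=\sum_{[\alpha]_M}\mathbb{J}'_{[\alpha]_M}$ respect the decomposition $\mathfrak{n}^-\oplus(\mathfrak{n}^-)^\ast=\bigoplus_{[\alpha]_M}\big(V_{[\alpha]_M}\oplus V_{[\alpha]_M}^\ast\big)$, so does $G=\sum_{[\alpha]_M}G_\alpha$ with $G_\alpha=-\mathbb{J}_{[\alpha]_M}\mathbb{J}'_{[\alpha]_M}$. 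Since the invariant $B$-transformations act block-diagonally with respect to this decomposition, the assembling argument of Proposition \ref{BModuli2} applies verbatim and gives $\mathfrak{G}_a(\mathbb{F})=\prod_{[\alpha_j]_M\subset\Pi^-}\mathfrak{G}_{\alpha_j}(\mathbb{F})$; it therefore suffices to determine $\mathfrak{G}_\alpha(\mathbb{F})$ for one class.

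For a fixed class I would invoke Proposition \ref{GKhaler3}: every admissible pair consists of one structure of complex type and one of noncomplex type, so by the commutation \eqref{GKhaler1} the metric equals $G_\alpha=-\mathcal{J}^c_{[\alpha]_M}\mathcal{J}^{nc}_{[\alpha]_M}$. Following the proof of Proposition \ref{GKhaler3}, the $B$-transformation by $B_\alpha=\tfrac{a}{x}X_\alpha^\ast\wedge X_\beta^\ast$ turns the noncomplex factor into the symplectic structure $\mathbb{J}_{\omega_\alpha}$ and hence carries $G_\alpha$ to the reduced metric $\widetilde{G}_\alpha$ of \eqref{GKhaler1.1}; consistently with Proposition \ref{ModuliM}, this reduced metric is block off-diagonal with lower-left block the Riemannian metric $g$, so its $\mathcal{B}$-orbit is recorded entirely by $g$, that is, by the three parameters $c$, $x$, $b$ appearing in \eqref{GKhaler1.1}.

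The last step is to read off the admissible ranges of $(c,x,b)$. By Lemma \ref{GKhaler2} the block $g$ is positive definite exactly when $cx>0$, which separates into the sheet $(c,x)\in(\mathbb{R}^+)^2$ coming from the rows marked $+$ of Proposition \ref{GKhaler3} and the sheet $(c,x)\in(\mathbb{R}^-)^2$ coming from the rows marked $-$, while $b$ is unconstrained. This yields $\mathfrak{G}_\alpha(\mathbb{F})=\big((\mathbb{R}^+)^2\times\mathbb{R}\big)\cup\big((\mathbb{R}^-)^2\times\mathbb{R}\big)$, and substituting into the product from the first paragraph gives the claimed description of $\mathfrak{G}_a(\mathbb{F})$.

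The step I expect to be the main obstacle is establishing that this parametrization is faithful: I must check that two admissible triples $(c,x,b)$ lying on the same sheet give $B$-inequivalent generalized metrics — equivalently, that $(c,x,b)\mapsto g$ is injective on each sheet — and that no residual invariant $B$-transformation identifies points from the two different sheets, so that the union is genuinely disjoint. Verifying this, together with confirming that the local reductions $G_\alpha\mapsto\widetilde{G}_\alpha$ are realized by honestly invariant $B$-transformations and therefore glue via Proposition \ref{BModuli2} into a single global one, is where the genuine content lies; the surrounding matrix identities are routine once \eqref{GKhaler1}, \eqref{GKhaler1.1} and Lemma \ref{GKhaler2} are in hand.
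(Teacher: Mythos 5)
Your proposal follows the paper's own proof almost line by line: the paper likewise observes that, up to invariant $B$-transformations, each block $G_\alpha$ can be put in the reduced form $\widetilde{G}_\alpha$ of \eqref{GKhaler1.1} (this is extracted from Proposition \ref{GKhaler3} and its proof), reads off the parameters $(c,x,b)$ subject to the positivity constraint $cx>0$ of Lemma \ref{GKhaler2}, and then assembles the classes into a product by invoking Proposition \ref{BModuli2}. Up to that point the two arguments are identical, and your write-up is if anything more explicit about why invariant $B$-transformations respect the block decomposition into $M$-classes.

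The difference is the final verification you flag as ``where the genuine content lies'': the paper simply omits it, and if one actually performs it, it fails in a way that affects the statement itself. Half of the check does go through: on a fixed sheet the map $(c,x,b)\mapsto g$ is injective, since $\det g=1/x^2$ recovers $x$ up to sign and then $c=g_{11}x$, $b=-g_{12}x$; and distinct reduced metrics lie in distinct $\mathcal{B}$-orbits, because $e^{-B}\widetilde{G}_\alpha e^{B}$ has vanishing upper-left block only if $g^{-1}B=0$, i.e.\ $B=0$. But the disjointness fails, and not because of any residual $B$-transformation: the triples $(c,x,b)$ and $(-c,-x,-b)$ give literally the same matrix $g$ (each entry $c/x$, $-b/x$, $(1+b^2)/(cx)$ is invariant under this simultaneous sign change), hence the same $\widetilde{G}_\alpha$. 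So the two sheets $((\mathbb{R}^+)^2\times\mathbb{R})$ and $((\mathbb{R}^-)^2\times\mathbb{R})$ parametrize the same set of $\mathcal{B}$-orbits of metrics on $V_{[\alpha]_M}\oplus V_{[\alpha]_M}^\ast$, the map from their union to that set of orbits is two-to-one, and the correct answer per class is a single sheet --- equivalently, the cone of positive definite symmetric $2\times 2$ matrices, which is homeomorphic to $(\mathbb{R}^+)^2\times\mathbb{R}$. This defect lies in the corollary as stated and in the paper's proof, not in anything you added; but since your proposal leaves exactly this step open, it is incomplete precisely there, and carrying the step out honestly would correct the statement rather than prove it.
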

\begin{proof}
As we saw before, up to action by $B$-transformations a generalized metric on $V_{[\alpha]_M}$ has the form $\widetilde{G}_\alpha=\left( 
\begin{array}{cc}
0 & g^{-1}\\
g & 0
\end{array}%
\right)$ where
$$g=\left( 
\begin{array}{cc}
\dfrac{c}{x} & -\dfrac{b}{x}\\
-\dfrac{b}{x} & \dfrac{1+b^2}{cx}
\end{array}%
\right)\qquad \textnormal{verifying}\qquad cx>0.$$
Therefore, up to action by $B$-transformations we get that the set of generalized metrics on $V_{[\alpha]_M}$ is parametrized by $((\mathbb{R}^+)^2\times\mathbb{R}) \cup ((\mathbb{R}^-)^2\times\mathbb{R})$. So, the result follows from Proposition \ref{BModuli2} as desired.
\end{proof}

\end{document}